\numberwithin{equation}{section}
\theoremstyle{plain}
\newtheorem{theorem}{Theorem}[section]
\numberwithin{equation}{section}
\theoremstyle{plain}
\newtheorem{lemma}{Lemma}[section]
\numberwithin{equation}{section}
\theoremstyle{plain}
\newtheorem{corollary}{Corollary}[section]
\numberwithin{equation}{section}
\theoremstyle{plain}
\newtheorem{proposition}{Proposition}[section]
\numberwithin{equation}{section}
\theoremstyle{plain}
\newtheorem{definition}{Definition}[section]
\begin{document}

\begin{frontmatter}
\title{$\varepsilon$-Strong Simulation for Multidimensional Stochastic Differential
Equations via Rough Path Analysis} %\thanksref{T1}
\runtitle{$\varepsilon$%
-Strong Simulation for SDEs}
%\thankstext{T1}{Footnote to the title with
%the ``thankstext'' command.}

\begin{aug}
\author{\fnms{Jose} \snm{%
Blanchet}\thanksref{m1}\ead[label=e1]{jose.blanchet@columbia.edu}},
\author{%
\fnms{Xinyun} \snm{Chen}\thanksref{m2}\ead[label=e2]{%
xinyun.chen@whu.edu.cn}}
\and
\author{\fnms{Jing} \snm{Dong}\thanksref{%
m3}\ead[label=e3]{jing.dong@northwestern.edu}}
%\ead[label=u1,url]{%
%http://www.foo.com}

%\thankstext{t1}{Some comment}
%\thankstext{t2}{%
%
%First supporter of the project}
%\thankstext{t3}{Second supporter of
%the
%project}
\runauthor{J Blanchet, X. Chen and J. Dong}

\affiliation{%
Columbia University\thanksmark{m1} and Wuhan University\thanksmark{%
m2} and Northwestern University \thanksmark{m3}}%

\address{500 West 120th Street, RM 313\\
New York, NY
10027\\
\printead{e1}}

\address{%
Economics and Management School of Wuhan University\\
Luojia Hill, Wuhan, China
\printead{%
e2}}
%\printead{u1}

\address{2145 Sheridan Road, M239\\
Evanston, IL 60208\\
\printead{e3}}
\end{aug}

\begin{abstract}
Consider a multidimensional diffusion process $X=\{X\left(t\right):t\in\lbrack0,1]\}$.
Let $\varepsilon>0$ be a \textit{deterministic}, user defined, tolerance error parameter.
Under standard regularity conditions on the drift and diffusion coefficients of $X$,
we construct a probability space, supporting both $X$ and an explicit, piecewise
constant, fully simulatable process $X_{\varepsilon}$ such that
\[
\sup_{0\leq t\leq1}\left\Vert X_{\varepsilon}\left(t\right)-X\left(t\right)  \right\Vert_{\infty}<\varepsilon
\]
\textit{with probability one}. Moreover, the user can adaptively choose
$\varepsilon^{\prime}\in\left(0,\varepsilon\right)$ so that
$X_{\varepsilon^{\prime}}$ (also piecewise constant and fully simulatable)
can be constructed conditional on $X_{\varepsilon}$ to ensure an error smaller than
$\varepsilon^{\prime}$ with probability one. Our construction requires a detailed study of continuity
estimates of the It\^o map using Lyons' theory of rough paths. We approximate
the underlying Brownian motion, jointly with the L\'{e}vy areas with a deterministic
$\varepsilon$ error in the underlying rough path metric.
\end{abstract}

\begin{keyword}[class=MSC]
\kwd[Primary ]{%
34K50}
\kwd{%
65C05}
\kwd{82B80}
\kwd[; secondary ]{97K60}
\end{%
keyword}

\begin{%
keyword}
\kwd{stochastic
differential equation}
\kwd{Monte Carlo method}
\kwd{Brownian Motion}
\kwd{L\'{e}vy area}
\kwd{Rough path}
\end{keyword%
}

\end{frontmatter}

\section{Introduction}

Consider the It\^o Stochastic Differential Equation (SDE)

\begin{equation}
dX(t)=\mu (X(t))dt+\sigma (X(t))dZ(t)\mbox{ , }X(0)=x(0)  \label{eq:main}
\end{equation}%
where $Z\left( \cdot \right) $ is a $d^{\prime }$-dimensional Brownian
motion, and $\mu \left( \cdot \right) :R^{d}\rightarrow R^{d}$ and $\sigma
\left( \cdot \right) :R^{d}\rightarrow R^{d\times d^{\prime }}$ satisfy
suitable regularity conditions. We shall assume, in particular, that both $%
\mu \left( \cdot \right) $ and $\sigma \left( \cdot \right) $ are Lipschitz
continuous so that a strong solution to the SDE is guaranteed to exist.
Additional assumptions on the first and second order derivatives of $\mu
\left( \cdot \right) $ and $\sigma \left( \cdot \right) $, which are
standard in the theory of rough paths, will be discussed in the sequel.

Our contribution in this paper is the joint construction of $X=\{X\left(
t\right) :t\in \lbrack 0,1]\}$ and a family of processes $X_{\varepsilon
}=\{X_{\varepsilon }\left( t\right) :t\in \lbrack 0,1]\}$, for each $%
\varepsilon \in \left( 0,1\right) $, supported on a probability space $%
\left( \Omega ,\mathcal{F},P\right) $, and such that the following
properties hold:

\begin{enumerate}
\item[(T1)] The process $X_{\varepsilon}$ is piecewise constant, with
finitely many discontinuities in $[0,1]$.

\item[(T2)] The process $X_{\varepsilon}$ can be simulated exactly and,
since it takes only finitely many values, its path can be fully stored.

\item[(T3)] We have that with $P$-probability \textit{one}
\begin{equation}
\sup_{t\in\lbrack0,1]}\left\vert \left\vert X_{\varepsilon}\left( t\right)
-X\left( t\right) \right\vert \right\vert _{\infty}<\varepsilon.  \label{TES}
\end{equation}

\item[(T4)] For any $m>1$ and $0<\varepsilon_{m}<...<\varepsilon_{1}<1$ we
can simulate $X_{\varepsilon_{m}}$ conditional on $X_{\varepsilon_{1}}$,...,$%
X_{\varepsilon_{m-1}}$.
\end{enumerate}

We refer to the class of procedures which achieve the construction of such
family $\{X_{\varepsilon }:\varepsilon \in \left( 0,1\right) \}$ as \textit{%
Tolerance-Enforced Simulation} (TES) or $\varepsilon $-strong simulation
methods. Throughout the paper we use $||\cdot||_{\infty}$ to denote the max-norm on $\mathbb{R}^{d}$.

This paper provides the first construction of a Tolerance-Enforced
Simulation procedure for multidimensional SDEs in substantial generality.
All other TES or $\varepsilon $-strong simulation procedures up to now are
applicable to one dimensional processes or multidimensional processes with
constant diffusion matrix (i.e. $\sigma \left( x\right) =\sigma $).

Let us discuss some considerations that motivate our study. We first discuss
how this paper relates to the current literature on $\varepsilon $-strong
simulation of stochastic processes, which is a recent area of research. The
paper of \cite{Chen2013} provides the construction of $X_{\varepsilon }$
satisfying only (T1) to (T3), in one dimension. In particular, bound (\ref%
{TES}) is satisfied for a given fixed $\varepsilon _{0}=\varepsilon >0$ and
it is not clear how to jointly simulate $\left\{ X_{\varepsilon
_{m}}\right\} _{m\geq 1}$ as $\varepsilon _{m}\searrow 0$ \ applying the
technique in \cite{Chen2013}. The motivation of constructing $X_{\varepsilon
_{0}}$ for \cite{Chen2013} came from the desire to produce exact samples
from a one dimensional diffusion $X\left( \cdot \right) $ satisfying (\ref%
{eq:main}), and also assuming $\sigma \left( \cdot \right) $ constant.

The authors in \cite{Chen2013} were interested in extending the
applicability of an algorithm introduced by Beskos and Roberts, see \cite%
{BR05}. The procedure of Beskos and Roberts, applicable to one dimensional
diffusions, imposed strong boundedness assumptions on the drift coefficient
and its derivative. The technique in \cite{Chen2013} enabled an extension
which is free of such boundedness assumptions by using a localization
technique that allowed to apply the ideas behind the algorithm in \cite{BR05}%
; see also \cite{BPR_06} for another approach which eliminates boundedness
assumptions. All of these developments are in the one-dimensional case.

The assumption of a constant diffusion coefficient comes at basically no
cost in generality when considering one dimensional diffusions because one
can always apply Lamperti's (one-to-one) transformation. Such transformation
allows to recast the simulation problem to one involving a diffusion with
constant $\sigma \left( \cdot \right) $. Lamperti's transformation cannot be
generally applied in higher dimensions.

The paper of \cite{BPR_12} extends the work of \cite{Chen2013} in that their
algorithms satisfy (T1) to (T4), but also in the context of one dimensional
processes. The paper \cite{PJR_2014}\ not only provides an additional
extension which allows to deal with one dimensional SDEs with jumps, but
also contains a comprehensive discussion on exact and $\varepsilon $-strong
simulation for SDEs. Property (T4) in the definition of TES is desirable
because it provides another approach at constructing unbiased estimators for
expectations of the form $Ef\left( X\right) $, where $f\left( \cdot \right) $
is, say, a continuous function of the sample path $X$. In order to see this,
let us assume for simplicity that $f\left( \cdot \right) $ is positive and
Lipschitz continuous in the uniform norm with Lipschitz constant $K$. Then,
let $T$ be any positive random variable with a strictly positive density $%
g\left( \cdot \right) $ on $[0,\infty )$ and define%
\begin{equation}
Z:=I\left( f\left( X\right) >T\right) /g\left( T\right) .
\label{Eq:Unbiased}
\end{equation}%
Observe that
\begin{equation*}
E[Z]=E[E\left[ Z|X\right] ]=E\left[\int_{0}^{\infty }I\left( f\left( X\right)
>t\right) \frac{g\left( t\right) }{g\left( t\right) }dt\right]=E[f\left( X\right)] ,
\end{equation*}%
so $Z$ is an unbiased estimator for $Ef\left( X\right) $. Therefore, if Properties
T(1) to T(4) hold, it is possible to simulate $Z$ by noting that $f\left(
X_{\varepsilon }\right) >T+K\varepsilon $ implies $f\left( X\right) >T$ and
if $f\left( X_{\varepsilon }\right) <T-K\varepsilon $, then $f\left(
X\right) \leq T$. Since (T4) allows to keep simulating as $\varepsilon $
becomes smaller and $T$ is independent of $X_{\varepsilon }$ with a positive
density $g\left( \cdot \right) $, then one eventually is able to simulate $Z$
exactly.

The major obstacle involved in developing exact sampling algorithms for
multidimensional diffusions is the fact that $\sigma \left( \cdot \right) $
cannot be assumed to be constant. Moreover, even in the case of
multidimensional diffusions with constant $\sigma \left( \cdot \right) $,
the one dimensional algorithms developed so far can only be extended to the
case in which the drift coefficient $\mu \left( \cdot \right) $ is the
gradient of some function, that is, if $\mu \left( x\right) =\nabla v\left(
x\right) $ for some $v\left( \cdot \right) $. The reason is that in this
case one can represent the likelihood ratio $L\left( t\right) $, between the
solution to (\ref{eq:main}) and Brownian motion (assuming $\sigma =I$ for
simplicity) involving a Riemann integral as follows%
\begin{align}
L\left( t\right) & =\exp \left( \int_{0}^{t}\mu \left( X\left( s\right)
\right) dX\left( s\right) -\frac{1}{2}\int_{0}^{t}\left\Vert \mu \left(
X\left( s\right) \right) \right\Vert _{2}^{2}ds\right)  \notag \\
& =\frac{\exp \left( v\left( X\left( t\right) \right) \right) }{\exp \left(
v\left( X\left( 0\right) \right) \right) }\exp \left( -\frac{1}{2}%
\int_{0}^{t}\lambda \left( X\left( s\right) \right) ds\right) ,
\label{LR_Rie}
\end{align}%
for $\lambda \left( x\right) =\Delta v\left( x\right) +\left\vert \left\vert
\nabla v\left( x\right) \right\vert \right\vert _{2}^{2}$. The fact that the
stochastic integral can be transformed into a Riemann integral facilitates
the execution of acceptance-rejection because one can interpret (up to a
constant and using localization as in \cite{Chen2013}) the exponential of
the integral of $\lambda \left( \cdot \right) $ as the probability that no
arrivals occur in a Poisson process with a stochastic intensity. Such event
(i.e. no arrivals) can be simulated by thinning.

So, our motivation in this paper is to investigate a novel approach that
allows to study $\varepsilon $-strong simulation for multidimensional
diffusions in substantial generality, without imposing the assumption that $%
\sigma \left( \cdot \right) $ is constant or that a Lamperti-type
transformation can be applied. Given the previous discussion on the
connections between exact sampling and $\varepsilon $-strong simulation, and
the limitations of the current techniques, we believe that our results here
provide an important step in the development of exact sampling algorithms
for general multidimensional diffusions. For example, in contrast to
existing techniques, which demand $L\left( t\right) $ to be expressed in
terms of a Riemann integral as indicated in (\ref{LR_Rie}), our results here
allow to approximate directly $L\left( t\right) $ in terms of the stochastic
integral representation (and thus one does not need to assume that $\mu
\left( x\right) =$ $\nabla v\left( x\right) $). We plan to report on these
implications in future papers.

Our results already allow to obtain unbiased estimator of expectations
of sample path functionals via \eqref{Eq:Unbiased}.
However, it is noted in \cite{BPR_12} that the expected
number of random variables required to simulate $Z$ is typically infinite.
The recent paper \cite{PJR_2014} discusses via numerical examples the
practical limitations of these types of estimators. The work of \cite{RG2012}%
, also proposes unbiased estimators for the expectation of Lipschitz
continuous functions of $X(1)$ using randomized multilevel Monte Carlo.
Nevertheless, their algorithm also exhibits infinite expected termination
time, except when one can simulate the L\'{e}vy areas exactly, which
currently can be done only in the context of two dimensional SDEs using the
results in \cite{Gaines_Lyons_1994}.

The authors in \cite{BFRS_2013} also use rough path analysis for Monte Carlo
estimation, but their focus is on connections to multilevel techniques and
not on $\varepsilon $-strong simulation.

In this paper we concentrate only on what \textit{is possible to do} in
terms of $\varepsilon $-strong simulation procedures and how to enable the
use of rough path theory for $\varepsilon $-strong simulation. We shall
study efficient implementations of the algorithms proposed in a separate
paper. Other research avenues that we plan to investigate, and which
leverage off our development in this paper, involve quantification of model
uncertainty using the fact that our $\varepsilon $-strong simulation
algorithms in the end are uniform for cases with a large class of drift and diffusion
coefficients.

Finally, we note that in order to build our Tolerance-Enforced Simulation
procedure we had to obtain new tools for the analysis of L\'{e}vy areas and
associated conditional large deviations results for L\'{e}vy areas given the
increments of Brownian motion. We believe that these technical results might
be of independent interest.

The rest of the paper is organized as follows. In Section \ref%
{Section_Main_Results} we describe the two main results of the paper. The
first of them, Theorem \ref{th:main}, provides an error bound between the
solution to the SDE described in (\ref{eq:main}) and a suitable piecewise
constant approximation. The second result, Theorem \ref{th:main2}, refers to
the procedures that are involved in simulating the bounds, jointly with the
piecewise constant approximation, thereby yielding (\ref{TES}). Section \ref%
{sec:main_alg} is divided into two subsections and it builds the
elements behind the proof of Theorem \ref{th:main2}. As it turns out, one
needs to simulate bounds on the so-called H\"{o}lder norms of the underlying
Brownian motion and the corresponding L\'{e}vy areas. Section \ref{SubS_TES_1} lays out the details of
the simulation of the Brownian motion and an upper bound of its $\alpha$-H\"{o}lder norm and Section \ref{sec:sim_levy}
lays out the details of the simulation of the L\'{e}vy areas and an upper bound of its $2\alpha$-H\"{o}lder norm.
Section \ref{Sect_RDE} is also
divided in several parts, corresponding to the elements of rough path theory
required to analyze the SDE described in (\ref{eq:main}) as a continuous map
of Brownian motion under a suitable metric (described in Section \ref%
{Section_Main_Results}). While the final form of the estimates in Section %
\ref{Sect_RDE} might be somewhat different than those obtained in the
literature on rough path analysis, the techniques that we use here are
certainly standard in that literature. We have chosen to present the details
because the techniques might not be well known to the Monte Carlo simulation
community and also because our emphasis is in finding explicit constants
(i.e. bounds) that are amenable to simulation.

\section{Main Results\label{Section_Main_Results}}

Our approach consists in studying the process $X$ as a transformation of the
underlying Brownian motion $Z$. Such transformation is known as the
It\^o-Lyons map and its continuity properties are studied in the theory of
rough paths, pioneered by T. Lyons, in \cite{Lyons_1998}.
%A rough path is a trajectory of unbounded variation.
A rough path is an effective way to summarize an irregular path information.
The theory of rough paths allows to
define the solution to an SDE such as (\ref{eq:main}) in a path-by-path
basis (free of probability) by imposing constraints on the regularity of the
iterated integrals of the underlying process $Z$. Namely, integrals of the
form
\begin{equation}
A_{i,j}\left( s,t\right) =\int_{s}^{t}\left( Z_{i}\left( u\right)
-Z_{i}\left( s\right) \right) dZ_{j}\left( u\right) .  \label{A_IJ_B}
\end{equation}

The theory results in different interpretations of the solution to (\ref%
{eq:main}) depending on how the iterated integrals of $Z$ are interpreted.
In this paper, we interpret the integral in (\ref{A_IJ_B}) in the sense of
It\^o.

It turns out that the It\^o-Lyons map is continuous under a suitable $\alpha $%
-H\"{o}lder metric defined in the space of rough paths. In particular, such
metric can be expressed as the maximum of the following two quantities:
\begin{align}
||Z||_{\alpha }& :=\sup_{0\leq s<t\leq 1}\frac{||Z(t)-Z(s)||_{\infty }}{%
|t-s|^{\alpha }},  \label{alpha_H_N_Z} \\
||A||_{2\alpha }& :=\sup_{0\leq s<t\leq 1}\max_{1\leq i,j\leq d^{\prime }}%
\frac{|A_{i,j}(s,t)|}{|t-s|^{2\alpha }}.  \label{2alpha_HNA}
\end{align}%
As we shall discuss, continuity estimates of the It\^o-Lyons map can be given
explicitly in terms of these two quantities.

In the case of Brownian motion, as we consider here, we have that $\alpha
\in \left( 1/3,1/2\right) $. It is shown in \cite{Davie_2007}, that under
suitable regularity conditions on $\mu \left( \cdot \right) $ and $\sigma
\left( \cdot \right) $, which we shall discuss momentarily, the Euler scheme
provides an almost sure approximation in uniform norm to the solution to the
SDE (\ref{eq:main}). Our first result provides an explicit characterization
of all of the (path-dependent) quantities that are involved in the final
error analysis (such as $\left\vert \left\vert Z\right\vert \right\vert
_{\alpha }$ and $\left\vert \left\vert A\right\vert \right\vert _{2\alpha }$%
), the difference between our analysis and what has been done in previous
developments is that ultimately we must be able to implement the Euler
scheme jointly with the path-dependent quantities that are involved in the
error analysis. So, it is not sufficient to argue that there exists a
path-dependent constant that serves as a bound of some sort, we actually
must provide a suitable representation that can be simulated in finite time.

In order to provide our first result, we introduce some notations. Let $%
D_{n} $ denote the dyadic discretization of order $n$ and $\Delta_{n}$
denote the mesh of the discretization. Specifically, $D_{n}:=%
\{t_{0}^{n},t_{1}^{n},\dots,t_{2^{n}}^{n}\}$ where $t_{k}^{n}=k/2^{n}$ for $%
k=0,1,2,\dots,2^{n}$ and $\Delta_{n}=1/2^{n}$. 
%Suppose we have a discretized approximation scheme.

Given $\hat{X}^{n}(0)=x(0)$, define $\{\hat{X}^{n}(t):t\in D_{n}\}$ by the
following recursion:
\begin{eqnarray}
\hat{X}_{i}^{n}(t_{k+1}^{n})&=&\hat{X}_{i}^{n}(t_{k}^{n})+\mu _{i}(\hat{X}%
^{n}(t_{k}^{n}))\Delta _{n}+\sum_{j=1}^{d^{\prime}}\sigma _{i,j}(\hat{X}%
^{n}(t_{k}^{n}))(Z_j(t_{k+1}^{n})-Z_j(t_{k}^{n})) \nonumber\\
&&+\sum_{j=1}^{d^{\prime}}\sum_{l=1}^{d}\sum_{m=1}^{d^{\prime}}\partial_l\sigma_{i,j}(\hat X^n(t_k^n))\sigma_{l,m}(\hat X^n(t_k^n))\tilde A_{m,j}^n(t_k^n,t_{k+1}^n),  \label{eq:recursion1}
\end{eqnarray}
where $\tilde A_{i,i}^n(t_k^n,t_{k+1}^n)=A_{i,i}(t_k^n,t_{k+1}^n)=(Z_i(t_{k+1}^n)-Z_i(t_k^n))^2/2-\Delta_n/2$, and $\tilde A_{i,j}^n(t_k^n,t_{k+1}^n)=0$ for $i\neq j$.
We let $\hat{X}^{n}(t)=\hat{X}^{n}(\lfloor t\rfloor )$ where $\lfloor
t\rfloor =\max \{t_{k}^{n}:t_{k}^{n}\leq t\}$ for $t\in \lbrack 0,1]$. We
denote
\begin{equation*}
R_{i,j}^{n}(t_{l}^{n},t_{m}^{n}):=
\sum_{k=l+1}^{m}\left\{A_{i,j}(t_{k-1}^{n},t_{k}^{n})-\tilde A_{i,i}^n(t_k^n,t_{k+1}^n)\right\}.
\end{equation*}%
and for fixed $\beta \in (1-\alpha ,2\alpha )$, write
\begin{equation*}
\Gamma _{R}:=\sup_{n}\sup_{0\leq s<t\leq 1,s,t\in D_{n}}\max_{1\leq i,j\leq
d^{\prime }}\frac{|R_{i,j}^{n}(s,t)|}{|t-s|^{\beta }\Delta _{n}^{2\alpha
-\beta }}.
\end{equation*}%
We notice that when $i=j$, $R_{i,i}^{n}(t_{l}^{n},t_{m}^{n})=0$; when $i\neq j$, $R_{i,j}^{n}(t_{l}^{n},t_{m}^{n})=
\sum_{k=l+1}^{m}A_{i,j}(t_{k-1}^{n},t_{k}^{n})$.
We also redefine $||Z||_{\alpha }$ and $||A||_{2\alpha }$ as
\begin{align*}
||Z||_{\alpha }& :=\sup_{n}\sup_{0\leq s<t\leq 1,s,t\in D_{n}}\frac{%
||Z(t)-Z(s)||_{\infty }}{|t-s|^{\alpha }}, \\
||A||_{2\alpha }& :=\sup_{n}\sup_{0\leq s<t\leq 1,s,t\in D_{n}}\max_{1\leq
i,j\leq d^{\prime }}\frac{|A_{i,j}(s,t)|}{|t-s|^{2\alpha }}.
\end{align*}%
The new definitions are equivalent to (\ref{alpha_H_N_Z}) and (\ref%
{2alpha_HNA}) since both $Z$ and $A$ are continuous processes. It is well
known that a solution to $X$ can be constructed path-by-path (see \cite%
{Davie_2007} and Section \ref{Sect_RDE}). The next result characterizes an
explicit bound for the error obtained by approximating $X$ using $\hat{X}%
^{n} $.

\begin{theorem}
\label{th:main} Suppose that there exists a constant $M$ such that $||\mu
||_{\infty }\leq M$, $||\nabla\mu||_{\infty }\leq M$ and $||\sigma
^{(i)}||_{\infty }\leq M$ for $i=0,1,2,3$, where $\sigma
^{(i)}$ denotes the $i$-th derivative of $\sigma$. If $\left\vert \left\vert
Z\right\vert \right\vert _{\alpha }\leq K_{\alpha }<\infty $, $\left\vert
\left\vert A\right\vert \right\vert _{2\alpha }\leq K_{2\alpha }<\infty $,
and $\Gamma _{R}<K_{R}$, we can compute $G$ explicitly in terms of $M$, $%
K_{\alpha }$, $K_{2\alpha }$ and $K_{R}$, such that
\begin{equation*}
\sup_{t\in \lbrack 0,1]}||\hat{X}^{n}(t)-X(t)||_{\infty }\leq G\Delta
_{n}^{2\alpha -\beta }.
\end{equation*}
\end{theorem}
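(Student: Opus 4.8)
The plan is to carry out the error analysis path by path, comparing the Euler recursion \eqref{eq:recursion1} with the rough-path expansion of the exact solution, and to resolve every path-dependent constant into $M$, $K_\alpha$, $K_{2\alpha}$ and $K_R$. The input from rough-path theory (the Davie expansion, cf.\ \cite{Davie_2007}, obtained with explicit constants in Section~\ref{Sect_RDE}) is that for $s<t$ in $D_n$,
\[
X(t)-X(s)=\mu(X(s))(t-s)+\sigma(X(s))(Z(t)-Z(s))+H(X(s))A(s,t)+\theta(s,t),
\]
where $H$ is the first-order vector field built from $\sigma$ and $\sigma'$ (so $\|H\|_\infty$ and $\mathrm{Lip}(H)$ are controlled by $M$ through the bounds on $\sigma^{(i)}$, $i\le 2$), where $\|X(t)-X(s)\|_\infty\le C_X|t-s|^\alpha$, and where $\|\theta(s,t)\|_\infty\le C_\theta|t-s|^{3\alpha}$, with $C_X,C_\theta$ explicit in $M,K_\alpha,K_{2\alpha}$ (the bound on $\sigma^{(3)}$ entering through the Taylor remainder). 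Subtracting \eqref{eq:recursion1} from this identity along consecutive dyadic points and summing, the error $e^n:=\hat X^n-X$, which vanishes at $0$, satisfies a discrete equation whose forcing decomposes into (i) Lipschitz terms $\mu(\hat X^n)-\mu(X)$ and $\sigma(\hat X^n)-\sigma(X)$, (ii) the L\'evy-area correction $\sum_k H(X(t_k^n))A(t_k^n,t_{k+1}^n)$ that \eqref{eq:recursion1} omits, and (iii) the accumulated remainder $\sum_k\theta(t_k^n,t_{k+1}^n)$.

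Term (iii) is immediate: at most $\Delta_n^{-1}$ summands of size $C_\theta\Delta_n^{3\alpha}$ give $O(\Delta_n^{3\alpha-1})$, dominated by $\Delta_n^{2\alpha-\beta}$ precisely because $\beta>1-\alpha$. Term (ii) is the crux, and the only place $\Gamma_R<K_R$ is used: a term-by-term bound gives $\Delta_n^{-1}\cdot K_{2\alpha}\Delta_n^{2\alpha}$, which diverges, so the cancellation in the area sums has to be exploited. I would extract it by a hierarchical (\emph{discrete sewing}) argument over the dyadic tree of $[0,1]$: for a dyadic interval $I=[a,b]$ put $E(I):=\sum_{t_k^n\in I}H(X(t_k^n))A(t_k^n,t_{k+1}^n)-H(X(a))R^n(a,b)$, so that $E(I)=E(I_L)+E(I_R)+[H(X(c))-H(X(a))]R^n(c,b)$ for the two halves $I_L,I_R$ meeting at the midpoint $c$; the leaf terms vanish, and the defect created at a node of length $\ell$ is at most $\mathrm{Lip}(H)\,C_X(\ell/2)^\alpha\cdot K_R(\ell/2)^\beta\Delta_n^{2\alpha-\beta}$. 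Summing over the tree, the geometric series $\sum_j 2^{j(1-\alpha-\beta)}$ converges exactly because $\alpha+\beta>1$, giving $\|E([0,1])\|_\infty\le C K_R\Delta_n^{2\alpha-\beta}$ with $C$ explicit; together with $\|H(X(0))R^n(0,1)\|_\infty\le\|H\|_\infty K_R\Delta_n^{2\alpha-\beta}$ this bounds (ii) by an explicit multiple of $K_R\Delta_n^{2\alpha-\beta}$, and the same estimate on a subinterval $[s,t]$ carries an extra factor $|t-s|^\beta$.

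To close the estimate one cannot bound the noise part of (i), $\sum_k[\sigma(\hat X^n(t_k^n))-\sigma(X(t_k^n))](Z(t_{k+1}^n)-Z(t_k^n))$, term by term — that would reintroduce a spurious $\Delta_n^{\alpha-1}$ since $\alpha<1/2$ — so one views $e^n$ as a discrete controlled rough path (with Gubinelli derivative of order $\|e^n\|$) and bounds this sum by the rough-integral estimate of Section~\ref{Sect_RDE}, which over $[s,t]$ contributes $\|e^n(s)\|_\infty\,O(|t-s|^\alpha)+\|e^n\|_{\alpha,[s,t]}\,O(|t-s|^{2\alpha})$ plus a remainder of order $|t-s|^\beta\Delta_n^{2\alpha-\beta}$. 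Combining this with the Lipschitz drift term and with (ii)--(iii) gives, for all $s<t$ in $D_n$, an inequality of the form $\|e^n(t)-e^n(s)\|_\infty\le A(|t-s|)\|e^n(s)\|_\infty+B(|t-s|)\|e^n\|_{\alpha,[s,t]}|t-s|^\alpha+C K_R|t-s|^\beta\Delta_n^{2\alpha-\beta}$ with $A(h)=O(h^\alpha)$ and $B(h)\to0$. Restricting to subintervals of a fixed length $h^\ast$ (chosen in terms of $M,K_\alpha$ so that $B(h^\ast)\le 1/2$), dividing by $|t-s|^\alpha$ and absorbing bounds $\|e^n\|_{\alpha,[s,t]}$, hence $\sup_{[s,t]}\|e^n\|_\infty$, by an explicit multiple of $\|e^n(s)\|_\infty+K_R\Delta_n^{2\alpha-\beta}$; patching the $\lceil 1/h^\ast\rceil$ such subintervals of $[0,1]$ and using $e^n(0)=0$ yields $\sup_{t\in D_n}\|e^n(t)\|_\infty\le G\Delta_n^{2\alpha-\beta}$ with $G$ the claimed explicit function of $M,K_\alpha,K_{2\alpha},K_R$ (the patching contributing a geometric factor). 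Extending from $D_n$ to all $t$ via $\hat X^n(t)=\hat X^n(\lfloor t\rfloor)$ costs an extra $C_X\Delta_n^\alpha\le C_X\Delta_n^{2\alpha-\beta}$, since $\beta>1-\alpha>\alpha$ (as $\alpha<1/2$).

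I expect the main difficulty to be the two sewing estimates — the one for the omitted area sum in (ii) and the one underlying the rough-integral bound for the noise — and, above all, forcing them (and the localization/patching step, where the self-referential H\"older seminorm of $e^n$ must be absorbed uniformly in $n$) to close with fully \emph{explicit} constants, which is exactly what distinguishes the present statement from \cite{Davie_2007}.
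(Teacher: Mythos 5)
Your outline is sound and would deliver the theorem; it shares the paper's two key technical ingredients (the Davie-type local expansion with an $O(|t-s|^{3\alpha})$ remainder, and a dyadic sewing induction in which $\Gamma_R<K_R$ controls the accumulated sums $R^n$ of consecutive L\'evy areas at order $|t-s|^{\alpha+\beta}\Delta_n^{2\alpha-\beta}$, convergent precisely because $1-\alpha<\beta<2\alpha$), but it organizes the error propagation differently. The paper inserts an intermediate Milstein-type scheme $X^{n}$ (recursion (\ref{eq:recursion2}), which retains the areas $A_{m,j}(t_k^n,t_{k+1}^n)$) and splits $\hat X^n-X=(\hat X^n-X^n)+(X^n-X)$: Proposition \ref{prop:error1} bounds the second piece by $G_1\Delta_n^{3\alpha-1}$ by telescoping the one-step local error along the discrete flow and invoking the Lipschitz-in-initial-condition stability estimate of Lemma \ref{lm:tech2} (Davie's route), while Proposition \ref{prop:error2} bounds the first piece by $G_2\Delta_n^{2\alpha-\beta}$ via the sewing estimate on $U^n$ in Lemma \ref{lm:tech3} — which is exactly your term (ii) argument, node defect and all — and then $G=G_1+G_2$ using $3\alpha-1\geq 2\alpha-\beta$. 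You instead compare $\hat X^n$ to $X$ in one pass and close the estimate by treating $e^n$ as a discrete controlled rough path and running a Gronwall-type absorption of $\Vert e^n\Vert_{\alpha}$ on short blocks. The trade-off: the paper's route never needs a controlled-path structure for $e^n$ or the self-referential seminorm absorption, but pays for it with the flow stability lemma; your route skips the intermediate scheme and the flow lemma, but must first establish uniform-in-$n$ a priori H\"older and remainder bounds for the Euler iterates $\hat X^n$ themselves (these cannot be obtained from $K_\alpha,K_{2\alpha}$ alone — the second-order term in $\sigma(\hat X^n(t_k))-\sigma(\hat X^n(s))$ regenerates the sums $R^n(s,t)$, so $K_R$ enters already at this stage), and must then carry the controlled-path stability estimate through with fully explicit constants; you allude to both points but they are where most of the work would sit if the proposal were executed in full.
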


\textbf{Remark: } A recipe that explains step-by-step how to compute $G$ in
terms of algebraic expressions involving $M,K_{\alpha },K_{2\alpha }$ and $%
K_{R}$ is given in Procedure A in the appendix to this section.

\bigskip

Using Theorem \ref{th:main}, we can proceed to state the main contribution
of this paper.

\begin{theorem}
\label{th:main2} In the context of Theorem \ref{th:main}, there is an
explicit Monte Carlo procedure that allows us to simulate random variables $%
K_{\alpha }$, $K_{2\alpha }$, and $K_{R}$ jointly with $\left\{ Z(t):t\in
D_{n}\right\} $ for any $n\geq 1$. Consequently, given any deterministic $%
\varepsilon >0$ we can select $n\left( \varepsilon \right) $ such that $%
G\Delta _{n\left( \varepsilon \right) }^{2\alpha -\beta }\leq \varepsilon $
and then set $X_{\varepsilon }\left( t\right) =\hat{X}^{n}(t)$ so that
\begin{equation}
\sup_{t\in \lbrack 0,1]}||X_{\varepsilon }(t)-X(t)||_{\infty }\leq
\varepsilon ,  \label{Bound_Eps}
\end{equation}%
with probability one.
\end{theorem}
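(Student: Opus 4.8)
The plan is to reduce Theorem~\ref{th:main2} to the problem of \emph{simulating upper bounds} for the three path-dependent random constants $\lVert Z\rVert_\alpha$, $\lVert A\rVert_{2\alpha}$ and $\Gamma_R$ (each of which is almost surely finite, as follows from standard results on H\"older continuity of Brownian motion and its L\'evy areas), jointly with the finite-dimensional skeleton $\{Z(t):t\in D_n\}$. Once such finite random upper bounds $K_\alpha\ge\lVert Z\rVert_\alpha$, $K_{2\alpha}\ge\lVert A\rVert_{2\alpha}$, $K_R\ge\Gamma_R$ are available on a common probability space supporting $Z$ (and hence $X$, constructed path-by-path as in Section~\ref{Sect_RDE}), Theorem~\ref{th:main} immediately yields the deterministic-in-$n$ bound $\sup_{t\in[0,1]}\lVert\hat X^n(t)-X(t)\rVert_\infty\le G\,\Delta_n^{2\alpha-\beta}$ with $G=G(M,K_\alpha,K_{2\alpha},K_R)$ computable by the recipe in the appendix. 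Since $\beta<2\alpha$, we have $\Delta_n^{2\alpha-\beta}=2^{-n(2\alpha-\beta)}\to0$, so for any deterministic $\varepsilon>0$ we may choose the smallest $n(\varepsilon)$ with $G\,\Delta_{n(\varepsilon)}^{2\alpha-\beta}\le\varepsilon$; note $n(\varepsilon)$ is random only through $G$, but is a well-defined finite integer on the event $\{K_\alpha,K_{2\alpha},K_R<\infty\}$, which has probability one. Setting $X_\varepsilon(t):=\hat X^{n(\varepsilon)}(t)$ then gives \eqref{Bound_Eps} with probability one; the process $X_\varepsilon$ is piecewise constant with at most $2^{n(\varepsilon)}$ discontinuities, and is a deterministic function of the finitely many increments $Z(t_{k}^{n(\varepsilon)})-Z(t_{k-1}^{n(\varepsilon)})$, hence fully simulatable once those increments are.

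The substantive content — and the main obstacle — is therefore the first clause: producing an \emph{explicit Monte Carlo procedure} that outputs $(K_\alpha,K_{2\alpha},K_R)$ together with $\{Z(t):t\in D_n\}$ for any prescribed $n$. My approach here would be to use the wavelet (Schauder/L\'evy--Ciesielski) synthesis of Brownian motion developed in Section~\ref{Section_Wavelets}: write $Z$ as a random series in the Schauder basis with i.i.d.\ Gaussian coefficients, so that the $\alpha$-H\"older norm of $Z$ is controlled, level by level, by the suprema of the (finitely many) Gaussian coefficients at each dyadic level times a deterministic geometric weight. One simulates the coefficients up to level $n$ exactly — this pins down $\{Z(t):t\in D_n\}$ — and then bounds the tail contribution of levels $>n$ by a random variable obtained from a stochastic dominating series whose law can be sampled in finite time (e.g.\ via record/extreme-value arguments for the level maxima of Gaussians, or a geometric-tilting/thinning construction). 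Summing the simulated head and the simulated-bound tail gives $K_\alpha$. The quantities $\lVert A\rVert_{2\alpha}$ and $\Gamma_R$ require the analogous treatment for the L\'evy areas $A_{i,j}(s,t)$; this is harder because, conditionally on the Brownian increments, the L\'evy area over a dyadic interval is \emph{not} Gaussian (it has a hyperbolic-secant–type conditional law), and one must control its H\"older-type fluctuations across all dyadic scales simultaneously. The new tools alluded to in the introduction — conditional large-deviation estimates for L\'evy areas given the increments of Brownian motion — are exactly what is needed to build a simulatable dominating random variable for the area contributions, and this is where the real work lies.

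Concretely, the key steps, in order, would be: (i) recall from Section~\ref{Section_Wavelets} the wavelet representation of $Z$ and the deterministic per-level bounds expressing $\lVert Z\rVert_\alpha$ in terms of level-maxima of the Gaussian coefficients; (ii) describe the exact simulation of all coefficients up to level $n$, thereby obtaining $\{Z(t):t\in D_n\}$, and the simulation of a stochastic upper bound for the residual tail, yielding $K_\alpha$; (iii) for the L\'evy areas, establish (or invoke from Section~\ref{Sect_RDE}) a multiscale decomposition of $A_{i,j}$ compatible with the wavelet levels, together with the conditional large-deviation/tail bounds for the dyadic-level area increments given the Brownian skeleton, and use these to construct simulatable dominating random variables $K_{2\alpha}$ and $K_R$ (the latter also using the telescoping structure $R^n_{i,j}(s,t)=\sum_k A_{i,j}(t_{k-1}^n,t_k^n)$ and the Chen-type additivity relation linking areas at successive dyadic scales); (iv) verify that the joint law of $(K_\alpha,K_{2\alpha},K_R,\{Z(t):t\in D_n\})$ so produced is consistent — i.e.\ the conditional laws match the true conditional laws of the area statistics given the increments — so that on the constructed probability space $Z$, and hence $X$, genuinely has these as valid upper bounds with probability one; (v) assemble the pieces: feed $(K_\alpha,K_{2\alpha},K_R)$ into the recipe for $G$ from the appendix, choose $n(\varepsilon)$, set $X_\varepsilon=\hat X^{n(\varepsilon)}$, and conclude \eqref{Bound_Eps} via Theorem~\ref{th:main}. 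I expect step~(iii), the simulatable stochastic domination of the L\'evy-area H\"older statistics uniformly across all dyadic scales, to be by far the hardest part, since it must simultaneously respect the (non-Gaussian) conditional law of the areas given the increments \emph{and} be exactly samplable in finite time — the remaining steps are either recollections of Section~\ref{Section_Wavelets}–\ref{Sect_RDE} or routine bookkeeping.
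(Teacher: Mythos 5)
Your outline follows the paper's route exactly at the structural level: wavelet (L\'evy--Ciesielski) synthesis plus a ``record breaker'' argument for $\Vert Z\Vert_{\alpha}$, a multiscale telescoping representation of the L\'evy areas combined with conditional large deviations for $\Gamma_{R}$ and $\Vert A\Vert_{2\alpha}$, and then Theorem \ref{th:main} to close. Your steps (i), (ii), (iv) and (v) correspond respectively to Lemma \ref{Lem_HN_alpha}, Algorithm I, the distributional verification in Section \ref{Sub_TES}, and Algorithm II. However, your step (iii) --- which you correctly identify as carrying all the weight --- is stated as a goal rather than carried out, and the mechanism that makes it work is precisely the paper's main technical contribution, so it must be counted as a genuine gap.

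Concretely, what is missing is the following. The paper writes $A_{i,j}(t_{k}^{n},t_{k+1}^{n})=\sum_{h>n}\bigl(L_{i,j}^{h}(2^{h-n}(k+1))-L_{i,j}^{h}(2^{h-n}k)\bigr)$, where the $L_{i,j}^{h}$ are random walks built from products of consecutive Brownian increments (Lemma \ref{Lem_Rep_Area}); it declares a record broken at level $n+m$ if some increment of $L_{i,j}^{n+m}$ exceeds $(k^{\prime}-k)^{\beta}\Delta_{n+m}^{2\alpha}$, and must then simulate a Bernoulli with success probability $P_{n}(\tau_{1}(n)<\infty)$, the probability that some future level breaks a record. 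This is achieved by (a) computing the conditional moment generating function $E_{n}\exp(\theta_{0}\{L_{i,j}^{n+m}(k^{\prime})-L_{i,j}^{n+m}(k)\})$ in closed form by recursing through the wavelet levels (Corollaries \ref{Cor_E_nm1} and \ref{Cor_E_nmL}); (b) using it both for a Chernoff bound (Proposition \ref{Prop_Nice_LD}) that makes the union over $(m,i,j,k,k^{\prime})$ summable and for an exponentially tilted proposal law $P_{n,m}^{i,j,k,k^{\prime}}$ from which the future coefficients can be sampled exactly; and (c) assembling these into a randomized acceptance/rejection scheme (Procedures Aux and B) whose acceptance indicator is exactly the desired Bernoulli and which, upon acceptance, delivers the coefficients with the correct conditional law. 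A further wrinkle you do not address: after Algorithm I the coefficients at levels beyond $N_{1}$ are no longer i.i.d.\ Gaussians but are conditioned on $\{|W_{i,k}^{m}|<4\sqrt{m+1}\}$, which forces the modification from Procedure Aux to Procedure B. Without (a)--(c) your ``simulatable dominating random variable'' for the area statistics is asserted but not constructed, and it is not clear that any simpler construction would be exactly samplable in finite time while respecting the (non-Gaussian) conditional law of the areas given the increments.
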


\textbf{Remark: }An explicit description of the algorithm involved in the
Monte Carlo procedure of Theorem \ref{th:main2} is given in Algorithm II at
the end of Section \ref{Sub_TES}, and the discussion that follows it.

Given $\left\{ Z(t):t\in D_{n\left( \varepsilon \right) }\right\} $ so that (%
\ref{Bound_Eps}) holds, the discussion in the remark that follows Algorithm
II explains how to further simulate $\left\{ Z(t):t\in D_{n^{\prime
}}\right\} $ for any $n^{\prime }>n\left( \varepsilon \right) $. This
refinement is useful in order to satisfy the important property (T4) given
in the Introduction. In detail, once $K_{\alpha }$, $K_{2\alpha }$, and $%
K_{R}$ have been simulated then $G$ has also been simulated and evaluated.
Consequently, given any sequence $\varepsilon _{m}<\varepsilon
_{m-1}<...<\varepsilon _{1}$ we just need to obtain $n_{i}$ such that $%
G\Delta _{n_{i}}^{2\alpha -\beta }\leq \varepsilon _{i}$. Then simulate $%
\left\{ Z(t):t\in D_{n_{i}}\right\} $ and construct $\hat{X}^{n_{i}}(\cdot )$
according to (\ref{eq:recursion1}). We let $X_{\varepsilon _{i}}\left(
t\right) =\hat{X}^{n_{i}}(t)$ and, owing to Theorem \ref{th:main}, we
immediately obtain%
\begin{equation*}
\sup_{t\in \lbrack 0,1]}||X_{\varepsilon _{i}}\left( t\right)
-X(t)||_{\infty }\leq \varepsilon _{i}
\end{equation*}%
with probability one, as desired.

\subsection{On Relaxing Boundedness Assumptions}

The construction of $\hat{X}^{n}(\cdot )$ in order to satisfy (\ref%
{Bound_Eps}) assumes that $||\mu ||_{\infty }\leq M$, $||\mu ^{\left(
1\right) }||_{\infty }\leq M$ and $||\sigma ^{(i)}||_{\infty }\leq M$ for $%
i=0,1,2,3$. Although these assumptions are strong, here we explain how to
relax them. Theorem \ref{th:main2} extends directly to the case in which $%
\mu $ and $\sigma $ are Lipschitz continuous, with $\mu $ differentiable and
$\sigma $ three times differentiable. Since $\mu $ and $\sigma $ are
Lipschitz continuous we know that $X\left( \cdot \right) $ has a strong
solution which is non-explosive.

We can always construct $\mu_{M}$ and $\sigma_{M}$ so that $\mu^{\left(
i\right) }\left( x\right) =\mu_{M}^{\left( i\right) }\left( x\right) $ for $%
\left\Vert x\right\Vert _{\infty}\leq c_{M}$ and $i=0,1$, and $%
\sigma^{\left( i\right) }\left( x\right) =\sigma_{M}^{\left( i\right)
}\left( x\right) $ for $\left\Vert x\right\Vert _{\infty}\leq c_{M}$ for $%
i=0,1,2,3$. Also we can construct $c_{M}$, where $c_M\rightarrow\infty$ as $M \rightarrow \infty$, and $||\mu
_{M}||_{\infty}\leq M$, $||\mu_{M}^{\left( 1\right) }||_{\infty}\leq M$ and $%
||\sigma_{M}^{(i)}||_{\infty}\leq M$ for $i=0,1,2,3$.

For $M\geq1$ we consider the SDE (\ref{eq:main}) with $\mu_{M}$ and $%
\sigma_{M}$ as drift and diffusion coefficients, respectively, and let $%
X_{M}\left( \cdot\right) $ be the corresponding solution to (\ref{eq:main}).
We start by picking some $M_{0}\geq1$ such that $\varepsilon<c_{M_{0}}$ and
let $M=M_{0}$. Then run Algorithm II to produce $\{\hat{X}%
_{M}^{n}(t):t\in\lbrack0,1]\}$, which according to Theorem \ref{th:main2}
satisfies,%
\begin{equation*}
\sup_{t\in\lbrack0,1]}||\hat{X}_{M}^{n}(t)-X_{M}(t)||_{\infty}\leq%
\varepsilon.
\end{equation*}
Note that only Steps 5 to 8 in Algorithm II\ depend on the SDE (\ref{eq:main}%
), through the evaluation of $G$, which depends on $M$ and so we write $%
G_{M}:=G$. If $\sup_{t\in\lbrack0,1]}||\hat{X}_{M}^{n}(t)||_{\infty}\leq
c_{M}-\varepsilon$, then we must have that $X\left( t\right) =X_{M}\left(
t\right) $ for $t\in\lbrack0,1]$ and we are done. Otherwise, we let $%
M\longleftarrow2M$ and run again only Steps 5 to 8 of Algorithm II. We
repeat doubling $M$ and re-running Steps 5 to 8 (updating $G_{M}$) until we
obtain a solution for which $\sup_{t\in\lbrack0,1]}||\hat{X}%
_{M}^{n}(t)||_{\infty}\leq c_{M}-\varepsilon$. Eventually this must occur
because
\begin{equation*}
\lim_{M\rightarrow\infty}\sup_{t\in\lbrack0,1]}||X_{M}(t)-X(t)||_{\infty}=0
\end{equation*}
almost surely and $X\left( \cdot\right) $ is non explosive.

\bigskip

\subsection{The Evaluation of $G$} \label{Section_Computing_G}

We next summarize the way to calculate $G$ in terms of $M$, $K_{\alpha}$, $%
K_{2\alpha}$ and $K_{R}$. We write $\bar d=\max\{d, d^{\prime}\}$. \newline

\noindent\textbf{Procedure A.}

\begin{enumerate}
\item Find $\delta$ and $C_{i}(\delta)>0$ for $i=1,2,3$ that satisfies the
following relations:
\begin{align*}
C_{1}(\delta) \geq& C_{3}(\delta)\delta^{2\alpha}+M\delta^{1-\alpha
}+\bar dMK_{\alpha}+\bar d^{3}M^{2}K_{2\alpha}\delta^{\alpha} \\
C_{2}(\delta) \geq& C_{3}(\delta)\delta^{\alpha}+\bar d^{3}M^{2}K_{2\alpha }
\\
C_{3}(\delta) \geq& \frac{2}{1-2^{1-3\alpha}}\{ MC_{1}(\delta
)+\bar dMC_{1}(\delta)^{2}K_{\alpha}+\bar d^{2}MC_{2}(\delta)K_{\alpha} \\
&+2\bar d^{3}M^{2}C_{1}(\delta)K_{2\alpha}\}
\end{align*}
(Refer to the proof of Lemma \ref{lm:tech1} for one particular method to find such $C_i(\delta)$'s.)
\item Set $C_{1}=\frac{2}{\delta}C_{1}(\delta)$, $C_{2}=\frac{2}{\delta}%
(C_{2}(\delta)+MC_{1}+\bar dMC_{1}K_{\alpha})$ and
\begin{equation*}
C_{3} = \frac{2}{1-2^{1-3\alpha}}(MC_{1}+\bar dMC_{1}^{2}K_{%
\alpha}+\bar d^{2}MC_{2}K_{\alpha}+2\bar d^{3}M^{2}C_{1}K_{2\alpha})
\end{equation*}

\item Find $\delta^{\prime}$ and $B_{i}(\delta^{\prime})$ for $i=1,2,3$ that
satisfies the following relations:
\begin{align*}
B_{1}(\delta^{\prime}) >& B_{3}(\delta^{\prime})\delta^{\prime2\alpha
}+2M\delta^{\prime1-\alpha}+2MK_{\alpha}+4M^{2}K_{2\alpha}\delta^{%
\prime\alpha} \\
B_{2}(\delta^{\prime}) >& B_{3}(\delta^{\prime})\delta^{\prime\alpha}+
4M^{2}K_{2\alpha} \\
B_{3}(\delta^{\prime}) >& \frac{4}{1-2^{1-3\alpha}}\{
MB_{1}(\delta^{\prime})+MB_{1}(\delta^{\prime2})K_{\alpha}+MB_{2}(%
\delta^{\prime })K_{\alpha} \\
&+2M^{2}B_{1}(\delta^{\prime})K_{2\alpha}\}
\end{align*}

\item Set $B=\frac{2}{\delta^{\prime}}B_{1}(\delta^{\prime})$

\item Set $G_{1}=(1+B)C_{3}$

\item Find $\delta^{\prime\prime}$ and $C_{4}(\delta^{\prime\prime})$ such
that
\begin{align*}
B\delta^{\prime\prime\alpha} & \leq2^{\alpha+\beta}-2 \\
C_{4}(\delta^{\prime\prime}) & \geq2(1-\frac{2+B\delta^{\prime\prime\alpha}}{%
2^{\alpha+\beta}})^{-1}(B\bar d^{3}M^{2}K_{R}+2\bar d^{3}M^{2}C_{1}K_{R})
\end{align*}

\item Set $C_{4}=(1+B\delta^{\prime\prime\alpha})C_{4}(\delta^{\prime\prime
3}M^{2}K_{R}+2\bar d^{3}M^{2}C_{1}K_{R})/\delta^{\prime\prime}$

\item Set $G_{2}=C_{4}+\bar d^{3}M^{2}K_{R}$

\item Set $G=G_{1}+G_{2}$
\end{enumerate}

\begin{lemma} \label{lm:pA}
Given $K_{\alpha}$, $K_{2\alpha}$, $K_R$ and $M$, Procedure $A$ can be executed.
\end{lemma}

\begin{proof}
We prove the lemma by providing one particular method to find such $\delta$ and $C_i(\delta)$'s, $i=1,2,3$. The method to find $\delta^{\prime}$, $B_i(\delta^{\prime})$'s, for $i=1,2,3$, follows exactly the same rationale.

Set $C_1(\delta)=\bar dM||Z||_{\alpha}+1/2$, $C_2(\delta)=\bar d^3 M^2||A||_{2\alpha}+1/2$ and $C_3(\delta)=\frac{2}{1-2^{1-3\alpha}}(  MC_{1}(\delta)+\bar dMC_{1}(\delta)^{2}||Z||_{\alpha}+\bar d^{2}MC_{2}(\delta)||Z||_{\alpha} +\bar d^2M^2||Z||_{\alpha}+2\bar d^{3}M^{2}C_{1}(\delta)||A||_{\alpha})$. Then we can pick $\delta$ small enough, such that  $C_{3}(\delta)\delta^{2\alpha}+M\delta^{1-\alpha}+\bar d^{3}M^{2}||A||_{2\alpha}\delta^{\alpha}<1/2$ and $C_3(\delta)\delta^{\alpha}<1/2$.
\end{proof}

\section{The main idea of the algorithmic development} \label{sec:main_alg}
Based on Theorem \ref{th:main2}, our main task is to calculate/simulate the upper bound for
$||Z||_{\alpha}$, $||A||_{2\alpha}$ and $\Gamma_{R}$ respectively.
In this section, we will introduce the main idea of our algorithmic development.

The development can be decomposed into two tasks. The first one is to find an infinite sum representation
of the objects of interest. The second one is to truncate the infinite sum up to a finite but random
level so that the error induced by the remaining terms in the summation is suitably controlled. 
The second task calls for novel algorithmic constructions. Simulating infinitely many terms is impossible. 
We need to find an efficient way to extract enough information on the remaining terms after the truncation, 
so that we can obtain an almost sure bound on the contribution of the terms that are not simulated. 
We next carry out the two tasks one by one.

\subsection{Infinite sum representation of Brownian motion and L\'{e}vy area}
We start by introducing a wavelet synthesis of Brownian motion, $\{Z(t): 0\leq t\leq 1\}$, called the L\'{e}vy-Ciesielski construction of Brownian
motion (\citet{Steele_2001}).

First we need to define a step function $H(\cdot)$ on $[0,1]$ by
\begin{equation*}
H(t)=I\left( 0\leq t<1/2\right) -I\left( 1/2\leq t\leq1\right) .
\end{equation*}
We then define a family of functions
\begin{equation*}
H_{k}^{n}(t)=2^{n/2}H(2^{n-1}t-k+1)
\end{equation*}
for all $n\geq0$ and $1\leq k\leq2^{n-1}$. Set $H_{0}^{0}(t)=1$. Then one
obtains the following infinite sum representation of Brownian motion.

\begin{theorem}[L\'{e}vy-Ciesielski Construction]
If $\{W_{k}^{n}:1\leq k\leq 2^{n-1},n\geq0\}$ is a sequence of independent
standard normal random variables, then the series defined by
\begin{equation}
Z\left( t\right) =W_0^0\int_{0}^{t}H_{0}^{0}(s)\;ds+\sum_{n=1}^{\infty}\sum_{k=1}^{2^{n-1}}\left(
W_{k}^{n}\int_{0}^{t}H_{k}^{n}(s)\;ds\right)  \label{wav_syn_eq}
\end{equation}
converges uniformly on $[0,1]$ with probability one. Moreover, the process $%
\{Z\left( t\right) :t\in\lbrack0,1]\}$ is a standard Brownian motion on $%
[0,1]$.
\end{theorem}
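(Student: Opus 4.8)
The plan is to establish two things in turn: almost-sure uniform convergence of the series to a process with continuous paths, and then identification of that limit as standard Brownian motion through its finite-dimensional distributions. Write $\Delta_k^n(t):=\int_0^t H_k^n(s)\,ds$ for the Schauder functions and $Z_N(t):=\sum_{n=0}^N\sum_{k=0}^{2^n-1}W_k^n\Delta_k^n(t)$ for the partial sums. The $n=0$ term is simply the linear function $W_0^0\,t$. For $n\ge1$, a direct computation shows that $\Delta_k^n$ is a ``tent'' supported on the dyadic interval $[k2^{-n},(k+1)2^{-n}]$ with maximal value $2^{-n/2-1}$ at the midpoint, and --- this is the key structural fact --- the supports of $\Delta_0^n,\dots,\Delta_{2^n-1}^n$ are pairwise disjoint up to endpoints.

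For the convergence, disjointness of supports gives, for each $n\ge1$,
\[
\sup_{t\in[0,1]}\Bigl|\sum_{k=0}^{2^n-1}W_k^n\Delta_k^n(t)\Bigr|=2^{-n/2-1}\max_{0\le k<2^n}|W_k^n|.
\]
Using the elementary bound $P(|G|>x)\le e^{-x^2/2}$ for a standard normal $G$ and $x\ge1$, a union bound yields $P\bigl(\max_{0\le k<2^n}|W_k^n|>c\sqrt n\bigr)\le(2e^{-c^2/2})^n$ for $n\ge1$, which is summable in $n$ provided $c>\sqrt{2\log 2}$. By the Borel--Cantelli lemma, almost surely $\max_{0\le k<2^n}|W_k^n|\le c\sqrt n$ for all sufficiently large $n$, hence $\sum_{n\ge1}2^{-n/2-1}\max_{0\le k<2^n}|W_k^n|<\infty$ almost surely. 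Together with finiteness of the $n=0$ term, this shows that $(Z_N)_{N\ge0}$ is almost surely a Cauchy sequence in $C[0,1]$ under the supremum norm; it therefore converges uniformly to a limit $Z$, which inherits continuity of sample paths. This is the process defined by \eqref{wav_syn_eq}.

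For the identification, note that for any $0\le t_1<\dots<t_m\le1$ the vector $(Z_N(t_1),\dots,Z_N(t_m))$ is a linear image of i.i.d.\ standard normals, hence centered Gaussian. The functions $\{H_0^0=1\}\cup\{H_k^n:n\ge1,\,0\le k<2^n\}$ form the Haar orthonormal basis of $L^2[0,1]$, and $\Delta_k^n(t)=\langle\mathbf 1_{[0,t]},H_k^n\rangle$, so Parseval's identity gives $\sum_{n,k}\Delta_k^n(t)^2=\|\mathbf 1_{[0,t]}\|_2^2=t$. Consequently $E[(Z_N(t)-Z_M(t))^2]$ is a tail of this convergent series, so $Z_N(t)$ is Cauchy in $L^2$; its $L^2$ limit agrees with the almost-sure limit $Z(t)$ (pass to a common almost-surely convergent subsequence). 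A coordinatewise $L^2$-limit of centered Gaussian vectors is centered Gaussian, so all finite-dimensional marginals of $Z$ are centered Gaussian, and
\[
E[Z(s)Z(t)]=\lim_{N\to\infty}E[Z_N(s)Z_N(t)]=\sum_{n,k}\Delta_k^n(s)\Delta_k^n(t)=\langle\mathbf 1_{[0,s]},\mathbf 1_{[0,t]}\rangle=s\wedge t,
\]
again by Parseval. Since $Z(0)=0$ and $Z$ has continuous paths, $Z$ is a centered Gaussian process with covariance $s\wedge t$, i.e.\ a standard Brownian motion on $[0,1]$.

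I expect the only genuinely delicate point to be the maximal-inequality together with the Borel--Cantelli step showing $\max_{0\le k<2^n}|W_k^n|=O(\sqrt n)$ almost surely, since this is precisely what controls the oscillation of the synthesized path at scale $n$ and makes the uniform convergence go through; the covariance identification then reduces entirely to completeness of the Haar system in $L^2[0,1]$, which is classical.
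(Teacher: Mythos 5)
Your proof is correct, and it is essentially the classical argument that the paper itself relies on by citing \citet{Steele_2001} (pp.\ 34--39) rather than proving the theorem: disjointness of the Schauder tent supports plus a Borel--Cantelli bound on $\max_{0\le k<2^n}|W_k^n|$ for uniform convergence, and completeness of the Haar system with Parseval for the covariance identification. No gaps.
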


Figure \ref{fig:wavelet} demonstrates the basic idea of the L\'{e}vy-Ciesielski
Construction using properties of the Brownian bridge. Specifically, as $Z(1)\sim N(0,1)$,
we set $Z(1)=W_0^0$. Conditional on the value of $Z(0)=0$ and $Z(1)$, $Z(1/2) \sim N(Z(1)/2, 1/4)$. Thus we
set $Z(1/2)=Z(1)/2+ 1/2 W_1^1$. In general, conditional on the value of $Z(t_k^{n-1})$ and $Z(t_{k+1}^{n-1})$, for $k=0,1,\dots, 2^{n-1}$,
$$Z(t_{2k+1}^{n})\sim N\left(\left(Z(t_k^{n-1})+Z(t_{k+1}^{n-1})\right)/2, \Delta_{n+1}\right)$$
Thus we set
$$Z(t_{2k+1}^{n})=\left(Z(t_k^{n-1})+Z(t_{k+1}^{n-1})\right)/2+\Delta_{n+1}^{1/2}W_{k+1}^{n}.$$
\begin{figure}
\vspace{6pc}
\caption[]{L\'{e}vy-Ciesielski Construction of Brownian Motion on $[0,1]$} \label{fig:wavelet}
\centering
\includegraphics[width=0.9\textwidth]{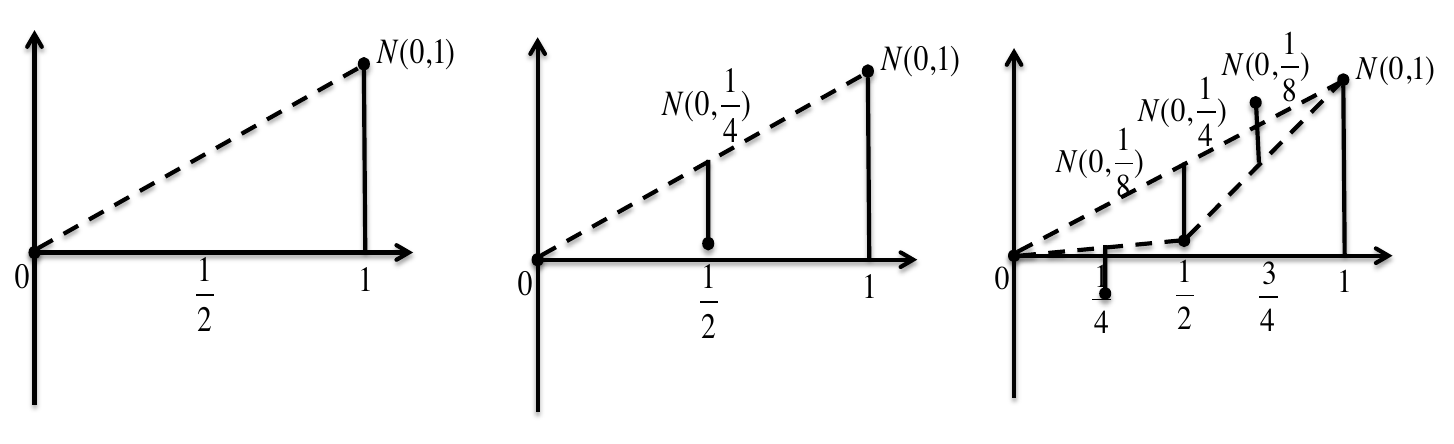}
\end{figure}

Eventually we will simulate the series up to a finite but random level $N_1$ to be discussed later.
By level we mean the order of dyadic discretization.
As we are simulating the discretization levels sequentially, we often
refer to \textquotedblleft time\textquotedblright\ when discussing levels.\\

We next analyze the L\'{e}vy area, $A_{i,j}(t_k^n, t_{k+1}^n)$, for $1\leq i,j \leq d^{\prime}$, $n \geq 1$, $0\leq k\leq 2^n-1$.
Using the algebraic property
\begin{align*}
A_{i,j}\left(  t_{k}^{n},t_{k+1}^{n}\right) = & A_{i,j}\left(  t_{2k}%
^{n+1},t_{2k+1}^{n+1}\right)  +A_{i,j}\left(  t_{2k+1}^{n+1},t_{2k+2}%
^{n+1}\right)\\
 &+\left(  Z_{i}\left(  t_{2k+1}^{n+1}\right)  -Z_{i}\left(
t_{2k}^{n+1}\right)  \right)  \left(  Z_{j}\left(  t_{2k+2}^{n+1}\right)
-Z_{j}\left(  t_{2k+1}^{n+1}\right)  \right) ,
\end{align*}
we have the following infinite sum representation of $A_{i,j}(t_k^n, t_{k+1}^n)$.

\begin{lemma}\label{Lem_Rep_Area}
For $n \geq 1$, $0\leq k\leq 2^n-1$,
\begin{align*}
A_{i,j}(t_{k}^{n},t_{k+1}^{n})=\sum_{h=n+1}^{\infty}\sum_{l=1}^{2^{h-n-1}}%
\{&\left(Z_{i}(t_{2^{h-n}k+2l-1}^{h})-Z_{i}(t_{2^{h-n}k+2l-2}^{h})\right) \\
&\times\left(Z_{j}(t_{2^{h-n}k+2l}^{h})-Z_{j}(t_{2^{h-n}k+2l-1}^{h})\right)%
\}.
\end{align*}
\end{lemma}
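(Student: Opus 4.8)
The plan is to recognize the right-hand side as the telescoping series obtained by refining the dyadic partition in a left-endpoint Riemann-sum approximation of the Itô integral $A_{i,j}(t^n_k,t^n_{k+1})=\int_{t^n_k}^{t^n_{k+1}}(Z_i(u)-Z_i(t^n_k))\,dZ_j(u)$. For $h\ge n$ set
\[
S_h:=\sum_{l=1}^{2^{h-n}}\bigl(Z_i(t^h_{2^{h-n}k+l-1})-Z_i(t^n_k)\bigr)\bigl(Z_j(t^h_{2^{h-n}k+l})-Z_j(t^h_{2^{h-n}k+l-1})\bigr),
\]
the left-point Riemann sum over the level-$h$ dyadic subdivision of $[t^n_k,t^n_{k+1}]$. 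Note that $S_n=0$, since the single coarsest term carries the factor $Z_i(t^n_k)-Z_i(t^n_k)=0$. The argument then has two pieces: (i) an algebraic identity showing that $S_h-S_{h-1}$ equals the inner sum indexed by $h$ in the statement, so that $S_h=\sum_{h'=n+1}^h(S_{h'}-S_{h'-1})$ is precisely the $h$-th partial sum of the claimed series; and (ii) a convergence argument showing $S_h\to A_{i,j}(t^n_k,t^n_{k+1})$ almost surely.

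Piece (i) is elementary bookkeeping. Fix a level-$(h-1)$ subinterval with endpoints $a<c$ and midpoint $b$. In $S_{h-1}$ it contributes $(Z_i(a)-Z_i(t^n_k))(Z_j(c)-Z_j(a))$, while in $S_h$ its two halves contribute $(Z_i(a)-Z_i(t^n_k))(Z_j(b)-Z_j(a))+(Z_i(b)-Z_i(t^n_k))(Z_j(c)-Z_j(b))$. Subtracting, the $Z_i(t^n_k)$ terms cancel and the $Z_j(a)$ pieces combine, collapsing the difference to $(Z_i(b)-Z_i(a))(Z_j(c)-Z_j(b))$. Writing $a=t^h_{2^{h-n}k+2l-2}$, $b=t^h_{2^{h-n}k+2l-1}$, $c=t^h_{2^{h-n}k+2l}$ and summing over the $2^{h-1-n}$ such subintervals yields
\[
S_h-S_{h-1}=\sum_{l=1}^{2^{h-n-1}}\bigl(Z_i(t^h_{2^{h-n}k+2l-1})-Z_i(t^h_{2^{h-n}k+2l-2})\bigr)\bigl(Z_j(t^h_{2^{h-n}k+2l})-Z_j(t^h_{2^{h-n}k+2l-1})\bigr),
\]
which is exactly the $h$-summand in the statement.

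Piece (ii) is the only genuinely probabilistic step, and the place to be careful. Each difference $S_h-S_{h-1}$ is a sum of products of increments of $Z$ over pairwise disjoint intervals of length $2^{-h}$; in both cases $i=j$ and $i\ne j$ the two factors of each product are independent, so each product has mean zero and variance $2^{-2h}$, and distinct products are uncorrelated (they are functions of independent increments). Hence $\mathrm{Var}(S_h-S_{h-1})=2^{h-n-1}\,2^{-2h}=2^{-(n+1)}2^{-h}$, so $\sum_{h>n}\|S_h-S_{h-1}\|_{L^2}<\infty$; consequently $\sum_{h>n}|S_h-S_{h-1}|$ has finite expectation and the series converges absolutely with probability one, as well as in $L^2$. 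Finally, since left-endpoint (non-anticipating) Riemann sums along partitions of mesh tending to zero converge in $L^2$ to the Itô integral, the $L^2$-limit of $S_h$ is $A_{i,j}(t^n_k,t^n_{k+1})$, and the almost sure limit must agree with it. I expect the main subtlety worth stressing (rather than a real obstacle) to be this last identification: using left endpoints is exactly what pins the representation to the Itô interpretation fixed in the paper — a midpoint rule would produce the Stratonovich area and a genuinely different series — and the uniform $L^2$ bound is what lets one bypass the absence of any martingale structure in the diagonal case $i=j$.
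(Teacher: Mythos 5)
Your proof is correct, and it reaches the identity by a genuinely different route than the paper. The paper works ``from the area down'': it iterates the Chen-type relation $A_{i,j}(r,t)=A_{i,j}(r,s)+A_{i,j}(s,t)+(Z_i(s)-Z_i(r))(Z_j(t)-Z_j(s))$, so that after $m$ refinements the claimed partial sum appears as the accumulated cross terms and the error is a sum of $2^{m-n}$ L\'evy areas over intervals of length $\Delta_m$; that remainder is then killed by a Chebyshev bound at threshold $1/m$ plus Borel--Cantelli. You work ``from the Riemann sums up'': your telescoping computation $S_h-S_{h-1}$ recovers exactly the same inner sums (indeed your $A_{i,j}(t_k^n,t_{k+1}^n)-S_m$ is precisely the paper's remainder term), but you replace the paper's treatment of the remainder by two standard facts — orthogonality of the mean-zero products over disjoint dyadic intervals, giving the geometrically summable bound $\Vert S_h-S_{h-1}\Vert_{L^2}=2^{-(n+1)/2}2^{-h/2}$, and the $L^2$ convergence of non-anticipating Riemann sums to the It\^o integral. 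What your route buys is absolute almost-sure convergence of the series directly from a summable $L^2$ estimate, with no need to compute second moments of the small-scale areas; what the paper's route buys is that it never leaves the algebra of areas (the Chen identity is reused elsewhere, e.g.\ in Lemma \ref{Lem_Sum_Bounds}) and never invokes the Riemann-sum characterization of the It\^o integral. Your closing remark is also on point: the left-endpoint choice is exactly what makes the limit the It\^o area rather than the Stratonovich one, matching the paper's convention.
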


The inner summation terms in the expression for $A_{i,j}(t_{k}^{n},t_{k+1}^{n})$
motivate the definition of the following family of processes $%
(L_{i,j}^{n}\left( k\right) :k=0,1,...,2^{n-1}, n \geq 1)$.
\begin{align*}
& L_{i,j}^{n}(0):=0 \\
&
L_{i,j}^{n}(k):=L_{i,j}^{n}(k-1)+(Z_{i}(t_{2k-1}^{n})-Z_{i}(t_{2k-2}^{n}))(Z_{j}(t_{2k}^{n})-Z_{j}(t_{2k-1}^{n}))
\end{align*}
for $k=1,2,...,2^{n-1}$.\newline
Using this definition and Lemma \ref{Lem_Rep_Area}\ we can succinctly write $%
A_{i,j}(t_{k}^{n},t_{k+1}^{n})$ as%
\begin{equation}
A_{i,j}(t_{k}^{n},t_{k+1}^{n})=\sum_{h=n+1}^{%
\infty}(L_{i,j}^{h}(2^{h-n}(k+1))-L_{i,j}^{h}(2^{h-n}k)).  \label{Usef_Rep_A}
\end{equation}

\subsection{The idea of record breakers}
To truncate the infinite sum up to a finite but random level, we use a strategy called record breakers.
Specifically, we first define a sequence of
\textquotedblleft record breakers\textquotedblright. We then formulate the ``future" information we need to know
as a sequence of \textquotedblleft yes or no\textquotedblright\
questions. Specifically, the yes or no question is formulated as
``will there be a new record breaker?" and answering the yes/no question is equivalent to simulating
a properly defined Bernoulli random variable.

The definition of the record breakers need to
satisfy the following two conditions:\\

\begin{enumerate}
\item[C1.] The following event happens with probability one: beyond some random
but finite time, there will be no more record breakers.

\item[C2.] By knowing that there are no more record breakers, the contribution of
the terms that we have not simulated yet are well under control (i.e.
bounded by a user defined tolerance error).\\
\end{enumerate}

We next explain how the above strategy is applied to the Brownian motion and the L\'{e}vy area respectively.

We have $d^{\prime}$ independent Brownian motions and we will use $W_{i,k}^{n}$ for $%
i\in\{1,...,d^{\prime}\}$ to denote the $\left( n,k\right) $ coefficient in
the expansion (\ref{wav_syn_eq}) for the $i$-th Brownian motion.

For $||Z||_{\alpha}$, we say a record is broken at $(i,n,k)$, for $1\leq
i\leq d^{\prime}$, $n\geq0$ and $1\leq k\leq2^{n-1}$, if
\begin{equation*}
|W_{i,k}^{n}|>4\sqrt{n+1}.
\end{equation*}
Let $\bar N_{1}:=\max\{n\geq1:|W_{i,k}^{n}|>4\sqrt{n+1}\mbox{ for some }1\leq
k\leq2^{n-1}, 1\leq i\leq d^{\prime}\}$. It is the last time the record breaker happens.
The following Lemma shows that condition C1 is satisfied ($E[N_1]<\infty$ implies $P(N_1<\infty)=1$).
\begin{lemma} \label{lm:N1_finite_mean}
There exists an integer valued random variable $N_1$, with $E[N_1]<\infty$, such that for all $n > N_1$, $1\leq k \leq 2^n-1$ and $1 \leq i \leq d^{\prime}$
$W_{i,k}^n\leq 4\sqrt{n+1}.$
%\begin{equation*}
%E\left[ N_{1}\right] <\infty
%\end{equation*}
\end{lemma}

We next check condition C2. Define $V^{n}=\max_{1\leq k\leq2^{n-1}}|W_{k}^{n}|$. We have the following
auxiliary lemma.

\begin{lemma}
\label{Lem_HN_alpha}%
\begin{equation*}
\Vert Z\Vert_{\alpha}\leq2^{2\alpha+1}\sum_{n=0}^{\infty}2^{-n(\frac{1}{2}%
-\alpha)}V^{n}.
\end{equation*}
\end{lemma}

Once we found $N_{1}$, we have
\begin{align*}
||Z||_{\alpha} &\leq  2^{2\alpha+1}\sum_{n=0}^{N_{1}}2^{-n(1/2-\alpha
)}V^{n}+2^{2\alpha+3}\sum_{n=N_{1}+1}^{\infty}2^{-n(1/2-\alpha)}\sqrt{%
n+1} \\
& \leq 2^{2\alpha+1}\sum_{n=0}^{ N_{1} }2^{-n(1/2-\alpha)}V^{n}+2^{2\alpha+3}C\frac{%
2^{-1/2(N_1+1)(1/2-\alpha)}}{1-2^{-1/2(1/2-\alpha )}}.
\end{align*}
where $C=\max_{n\geq N_1+1}\{2^{-n/2(1/2-\alpha)}\sqrt{n+1}\}$.\\

For the L\'{e}vy area, we first notice that when $i=j$,
\begin{eqnarray*}
&&\sup_{n}\sup_{0\leq s<t\leq 1, s,t\in D_n}\frac{A_{i,i}(s,t)}{(t-s)^{2\alpha}}\\
&=&\sup_{n}\sup_{0\leq s<t\leq 1, s,t\in D_n}\frac{(B(t)-B(s))^2-(t-s)^2}{2(t-s)^{2\alpha}}\\
&\leq&\frac{||Z||_{\alpha}^{2}+1}{2},
\end{eqnarray*}
and
$$R_{i,i}^n(t_l^n,t_m^n)=0.$$
When $i\neq j$, the record breaker is defined for the random walk $L_{i,j}^n$'s.
Specifically, for $L$, we say a record is broken at $(i,j,n,k,k^{\prime})$, for $1\leq i,j\leq d^{\prime}$, $i\neq j$, $n\geq1, 0\leq k <
k^{\prime}<2^{n-1}$, if
\begin{equation*}
|L_{i,j}^{n}(k^{\prime})-L_{i,j}^{n}(k)|>(k^{\prime}-k)^{\beta}\Delta_{n}^{2\alpha},
\end{equation*}
where $\beta\in (1-\alpha,2\alpha)$.
Let $\bar N_{2}:=\max\{n \geq1: |L_{i,j}^{n}(k^{\prime})-L_{i,j}^{n}(k)| >
(k^{\prime}-k)^{\beta}\Delta_{n}^{2\alpha} \mbox{ for some } 0 \leq k < k^{\prime}
\leq2^{n-1}, 1\leq i,j \leq d^{\prime}, i\neq j\}$. It is the last time the record breaker happens.
The following lemma shows that condition C1 is satisfied.

\begin{lemma}
\label{Lem_L_Abs_Sum}There exists an integer valued random variable $N_{2}$, with $E[N_2]=o\left((1-2\alpha)^{-2}\right)$, such that for all $n >
N_{2}$ and all $0\leq l<m\leq2^{n-1}$ we have
$|L_{i,j}^{n}(m)-L_{i,j}^{n}(l)|\leq(m-l)^{\beta}\Delta_{n}^{2\alpha}$
for $\alpha \in (1/3, 1/2)$ and $\beta\in (1-\alpha, 2\alpha)$.
\end{lemma}

We next check condition C2. The following corollary follows directly from \eqref{Usef_Rep_A} and the definition of $R_{i,j}^n$.
\begin{corollary}
\label{Prop_Rep_R}%
For $i\neq j$,
\begin{equation*}
R_{i,j}^{n}(t_{l}^{n},t_{m}^{n})=\sum_{h=n+1}^{%
\infty}\left(L_{i,j}^{h}\left(2^{h-n}m\right)-L_{i,j}^{h}\left(2^{h-n}l\right)\right).
\end{equation*}
\end{corollary}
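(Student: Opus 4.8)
The plan is to substitute the series representation (\ref{Usef_Rep_A}) of the L\'evy-area increments into the definition of $R_{i,j}^{n}$ and then telescope. I would start from
\[
R_{i,j}^{n}(t_{l}^{n},t_{m}^{n})=\sum_{k=l+1}^{m}A_{i,j}(t_{k-1}^{n},t_{k}^{n}),
\]
and rewrite each summand using (\ref{Usef_Rep_A}) (equivalently Lemma \ref{Lem_Rep_Area}) with the index shifted by one, namely
\[
A_{i,j}(t_{k-1}^{n},t_{k}^{n})=\sum_{h=n+1}^{\infty}\bigl(L_{i,j}^{h}(2^{h-n}k)-L_{i,j}^{h}(2^{h-n}(k-1))\bigr).
\]

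Next I would interchange the finite outer sum over $k\in\{l+1,\dots,m\}$ with the series over $h\ge n+1$. Since the outer sum is finite this is just linearity of summation, valid as soon as each inner series converges, and this is exactly where Lemma \ref{Lem_L_Abs_Sum} enters: for $h\ge N_{2}$ the generic term is bounded in absolute value by $(2^{h-n})^{\beta}\Delta_{h}^{2\alpha}=2^{-n\beta}\,2^{(\beta-2\alpha)h}$, and since $\beta\in(1-\alpha,2\alpha)$ we have $\beta-2\alpha<0$, so the series converges absolutely (the finitely many terms with $n+1\le h<N_{2}$ are harmless). After the interchange,
\[
R_{i,j}^{n}(t_{l}^{n},t_{m}^{n})=\sum_{h=n+1}^{\infty}\ \sum_{k=l+1}^{m}\bigl(L_{i,j}^{h}(2^{h-n}k)-L_{i,j}^{h}(2^{h-n}(k-1))\bigr).
\]
For each fixed $h$ the inner sum telescopes: the term $L_{i,j}^{h}(2^{h-n}k)$ coming from index $k$ cancels the term $L_{i,j}^{h}(2^{h-n}(k-1))$ coming from index $k+1$, leaving only $L_{i,j}^{h}(2^{h-n}m)-L_{i,j}^{h}(2^{h-n}l)$. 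Substituting back yields the stated identity, and the same bound from Lemma \ref{Lem_L_Abs_Sum} shows the resulting series over $h$ converges absolutely, so the right-hand side is well defined.

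There is no real obstacle here; the only step deserving care is the justification for interchanging the order of summation, which is immediate once one records absolute convergence of the $h$-series via Lemma \ref{Lem_L_Abs_Sum}. Everything else is the bookkeeping index shift $k\mapsto k-1$ and a standard telescoping cancellation, which is why the corollary follows as a ``direct application'' of Lemmas \ref{Lem_Rep_Area} and \ref{Lem_L_Abs_Sum}.
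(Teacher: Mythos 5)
Your proposal is correct and follows essentially the same route as the paper: substitute the series representation of each $A_{i,j}(t_{k-1}^{n},t_{k}^{n})$ from Lemma \ref{Lem_Rep_Area}, justify the interchange of the two summations via the absolute-convergence bound supplied by Lemma \ref{Lem_L_Abs_Sum} (the paper invokes Fubini for exactly this), and telescope in $k$. Your bookkeeping of the indices is in fact slightly cleaner than the paper's displayed intermediate formula, but the argument is the same.
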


Then we have the following bounds for $||A||_{2\alpha}$ and $\Gamma_R$ based on the $N_2$.

\begin{lemma}
\label{Lem_Sum_Bounds} Suppose that $N_{2}$ is chosen according to Lemma \ref%
{Lem_L_Abs_Sum}. We define
\begin{equation*}
\Gamma_{L}:=\max\left\{1,\max_{1\leq i,j\leq d^{\prime}, i\neq j}\max_{n<N_{2}}\max_{0\leq l < m
\leq2^{n-1}}\left\{ \frac{|L_{i,j}^{n}(m)-L_{i,j}^{n}(l)|}{(m-l)^{\beta
}\Delta_{n}^{2\alpha}}\right\} \right\}.
\end{equation*}
Then
\begin{equation*}
\Gamma_{R} \leq\frac{2^{-(2\alpha-\beta)}}{1-2^{-(2\alpha-\beta)}}\Gamma_{L}
\end{equation*}
and
\begin{equation*}
||A||_{2\alpha} \leq\Gamma_{R} \frac{2}{1-2^{-2\alpha}} + ||Z||_{\alpha}^{2}%
\frac{2^{1-\alpha}}{1-2^{-\alpha}}.
\end{equation*}
\end{lemma}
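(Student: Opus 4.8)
The two displayed bounds are essentially independent, and I would prove them separately.

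For the bound on $\Gamma_R$: fix $n$, dyadic points $s=t_l^n<t=t_m^n$, and indices $i,j$. By Corollary \ref{Prop_Rep_R}, $R_{i,j}^n(s,t)$ is a telescoping series in $h>n$ whose $h$-th term is an increment of the walk $L_{i,j}^h$ spanning $2^{h-n-1}(m-l)$ index steps. I would bound that term by $\Gamma_L\,\big(2^{h-n-1}(m-l)\big)^\beta\Delta_h^{2\alpha}$: for $n<h<N_2$ this is exactly the inequality defining $\Gamma_L$ (at level $h$, with the corresponding endpoints), while for $h\ge N_2$ it follows from Lemma \ref{Lem_L_Abs_Sum} together with $\Gamma_L\ge 1$ (which we may assume, replacing $\Gamma_L$ by $\max(1,\Gamma_L)$ if necessary). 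Summing the geometric series $\sum_{h>n}2^{(h-n-1)\beta}\Delta_h^{2\alpha}$ — convergent precisely because $\beta<2\alpha$, which is where that hypothesis enters — and rewriting $(m-l)^\beta=|t-s|^\beta\Delta_n^{-\beta}$ yields $|R_{i,j}^n(s,t)|\le \frac{2^{-2\alpha}}{1-2^{-(2\alpha-\beta)}}\,\Gamma_L\,|t-s|^\beta\Delta_n^{2\alpha-\beta}\le \frac{2^{-(2\alpha-\beta)}}{1-2^{-(2\alpha-\beta)}}\,\Gamma_L\,|t-s|^\beta\Delta_n^{2\alpha-\beta}$; dividing by $|t-s|^\beta\Delta_n^{2\alpha-\beta}$ and taking the supremum over $n,s,t,i,j$ is the first claim.

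For the bound on $||A||_{2\alpha}$: fix dyadic $s<t$ and indices $i,j$, and write $[s,t]$ as the concatenation of its maximal dyadic sub-blocks $J_1,\dots,J_r$ with successive right endpoints $s=e_0<e_1<\dots<e_r=t$. Iterating Chen's additivity identity
\[
A_{i,j}(a,c)=A_{i,j}(a,b)+A_{i,j}(b,c)+\big(Z_i(b)-Z_i(a)\big)\big(Z_j(c)-Z_j(b)\big)
\]
from left to right gives $A_{i,j}(s,t)=\sum_{q=1}^r A_{i,j}(J_q)+\sum_{q=1}^{r-1}\big(Z_i(e_q)-Z_i(s)\big)\big(Z_j(e_{q+1})-Z_j(e_q)\big)$. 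The key point is that each $J_q$ is a single dyadic interval, of level $p_q$ say, hence a one-term level-$p_q$ partition of itself, so $A_{i,j}(J_q)=R_{i,j}^{p_q}(J_q)$ and therefore $|A_{i,j}(J_q)|\le \Gamma_R|J_q|^\beta\Delta_{p_q}^{2\alpha-\beta}=\Gamma_R|J_q|^{2\alpha}$ (using $\Delta_{p_q}=|J_q|$); this is what lets us control the diagonal sum through $\Gamma_R$. For the cross terms I would use $|Z_i(e_q)-Z_i(s)|\le ||Z||_\alpha|t-s|^\alpha$ and $|Z_j(e_{q+1})-Z_j(e_q)|\le ||Z||_\alpha|J_{q+1}|^\alpha$. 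Since a maximal dyadic decomposition has at most two blocks of each dyadic length and every block has length at most $|t-s|$, both $\sum_q|J_q|^{2\alpha}$ and $\sum_q|J_q|^\alpha$ are dominated by geometric series, giving $\sum_q|J_q|^{2\alpha}\le \frac{2}{1-2^{-2\alpha}}|t-s|^{2\alpha}$ and $\sum_q|J_q|^\alpha\le \frac{2^{1-\alpha}}{1-2^{-\alpha}}|t-s|^\alpha$; collecting the estimates (and using $\alpha+\alpha=2\alpha$) gives $|A_{i,j}(s,t)|\le\big(\Gamma_R\frac{2}{1-2^{-2\alpha}}+||Z||_\alpha^2\frac{2^{1-\alpha}}{1-2^{-\alpha}}\big)|t-s|^{2\alpha}$, and taking the supremum over dyadic $s<t$ and over $i,j$ finishes the proof.

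The main obstacle is the second inequality, and in particular recognizing that $A_{i,j}$ over a genuine dyadic block has to be controlled through $\Gamma_R$ — i.e. through the cancellation already packaged into the walks $L^h$ — rather than bounded termwise via $||Z||_\alpha^2$: recursively bisecting $[s,t]$ and summing the crude estimate $|(Z_i(\cdot)-Z_i(\cdot))(Z_j(\cdot)-Z_j(\cdot))|\le ||Z||_\alpha^2(\text{length})^{2\alpha}$ produces a series with ratio $2^{1-2\alpha}>1$, which diverges. The remaining work — the split of the $h$-sum at $N_2$ in the first part, and matching the combinatorics of the maximal dyadic decomposition to the stated constants in the second — is routine bookkeeping.
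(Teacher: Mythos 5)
Your proposal is correct and follows essentially the same route as the paper: the first bound is the paper's argument verbatim (Corollary \ref{Prop_Rep_R} plus the uniform bound on increments of $L^h$ and a geometric series in $h$), and the second is the paper's dyadic-decomposition argument, resting on the same key observation that $A_{i,j}$ over a single dyadic block is a one-step $R^{h}$ and hence controlled by $\Gamma_R$ rather than by $\|Z\|_\alpha^2$. The only difference is cosmetic: the paper iterates Chen's relation by peeling dyadic blocks from both ends of $[s,t]$ simultaneously (via the operators $\underline{s}^h,\bar{s}^h$), whereas you telescope left to right over the maximal dyadic blocks, which changes the form of the cross terms but yields the same constants.
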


In what follows, we shall explain how to simulate the random numbers
($N_{1}$ and $N_{2}$) jointly with the wavelet construction using the
``record breaker" strategy introduced in the previous section. Specifically, we first find all
the record breakers in sequence and then simulate the rest of the process
conditional on the information obtained by knowing the location of all the (finitely many) record breakers. The challenge lies in the fact
that the probability of success of the Bernoulli trials, which corresponds to the yes/no questions defined
in terms of the record breakers, is not known to us.
We start with the procedure
to simulate $N_1$ in Section \ref{SubS_TES_1}, which is built on a sandwiching idea. Then conditional on the value of $N_1$, we introduce the procedure to
simulation $N_2$ in Section \ref{sec:sim_levy} based on an acceptance-rejection scheme, where the proposal distribution is built on
some exponential tilting.

\section{Tolerance-Enforced Simulation of Bounds on $\protect\alpha$-H%
\"{o}lder Norms \label{SubS_TES_1}}
We first note that $N_{1}$ is not a stopping time
with respect to the filtration generated by $\{(W_{i,k}^{n}:0\leq k\leq
2^{n}-1, 1\leq i \leq d^{\prime}):n\geq1\}$.

For the simplicity of demonstration, we shall focus on the 1-dimensional case.
For $d^{\prime}>1$, we apply the same procedure for each Brownian motion.
In what follows in this subsection, we shall drop the subscription $i$.

We call a pair $\left( n,k\right)$ a record-broken-pair if $\left\vert
W_{k}^{n}\right\vert >4\sqrt{n+1}$. All pairs (both
record-broken-pairs and non record-broken-pairs) can be totally ordered
lexicographically, i.e. using $2^{n-1}+k$. The distribution of subsequent pairs at which records are
broken is not difficult to compute (because of the independence of $%
W_{k}^{n}$'s). So, using a sequential acceptance / rejection procedure we
can simulate all of the record-broken-pairs. Conditional on these pairs, the
distribution of the $\{(W_{k}^{n}:0\leq k\leq2^{n}-1):n\geq1\}$ is
straightforward to describe. Precisely, if $\left( k,n\right) $ is a
record-broken-pair, then $W_{k}^{n}$ is conditioned on $\left\vert
W_{k}^{n}\right\vert >4\sqrt{n+1}$, and thus is straightforward to simulate.
Similarly, if $\left( k,n\right) $ is not a record-broken-pair, then $%
W_{k}^{n}$ is conditioned on $\left\vert W_{k}^{n}\right\vert \leq4\sqrt {n+1%
}$, and also can be easily simulated.

The simulation of the record-broken-pairs has been studied in \cite%
{BlanchetChen_2013}.
The idea is to find all the record breakers sequentially until there are no more record breakers.
The challenge lies in sampling the Bernoulli random variable corresponding to the question ``whether there will be no more record breakers in the future".
We take sampling the first breaker as an example. The probability that there are no more record breakers
beyond $1$ is
$$p(1):=\prod_{n=1}^{\infty}\prod_{k=0}^{2^n-1}P\left(|W_k^n|\leq 4\sqrt{n+1}\right),$$
which involves evaluating the product of infinite many terms and we do not know its value in closed form.
However, we can find a sequence of upper bound and lower bounds of $p(1)$, which are defined as
$$U_h(1)=\prod_{r=1}^{h}P\left(|W_{k(r)}^{n(r)}|\leq 4\sqrt{\lfloor\log_2 r\rfloor+1}\right)$$
where $r=2^{n(r)-1}+k(r)$ and
$$D_h(1)=(1-h^{1-4^2/2})U_h$$
respectively. The upper and lower bounds satisfy that $D_h(1)<D_{h+1}(1)<p(1)<U_h(1)<U_{h+1}(1)$
and $\lim_{h\rightarrow\infty}D_h(1)=p(1)=\lim_{h\rightarrow\infty}U_h(1)$.
We also have that $U_h(1)-U_{h+1}(1)$ is equal to the probability that the first record breaker happens at position $h$.
Thus we can check whether the Bernoulli trial is a success or failure by updating the upper and lower bounds sequentially.
Moreover, if the Bernoulli trial is a failure (there are more record breakers beyond the current index),
we also know the index of the next record breaker.
We synthesize algorithm 2W in \cite{BlanchetChen_2013} for our purposes next.\newline

\noindent\textbf{Algorithm I: Simulate $N_{1}$ jointly with the
record-broken-pairs}

%\textbf{Input}: A positive parameter $\rho=4$ (In fact any $\rho \geq 4$ would work).

\textbf{Output}: A vector $S$ which gives all the indices $l=2^{n}+k$ such
that $(n,k)$ is a broken-record-pair.

\textbf{Step 0}: Initialize $R=0$ and $S$ to be an empty array.

\textbf{Step 1}: Set $U=1$, $D=0$. Simulate $V\sim$ Uniform$(0,1)$.

\textbf{Step 2}: While $U>V>D$, set $R\leftarrow R+1$ and $U\leftarrow
P(|W_{k}^{n}|\leq4\sqrt{\lfloor\log_2{R}\rfloor+1})\times U$ and $D\leftarrow$ $%
(1-R^{1-4^2/2})\times U$.

\textbf{Step 3}: If $V\geq U$, add $R$ to the end of $S$, i.e. $S=[S,R]$,
and return to Step 1.

\textbf{Step 4}: If $V\leq D$, $N_1=\lceil\log_2\max(S)\rceil$.

\textbf{Step 5}: Output $S$.\\
\noindent{\bf End of Algorithm I}\\

\textbf{Remark}: Observe that for every $l=2^{n-1}+k\in S$, we can generate $%
W_{k}^{n}$ conditional on the event $\{|W_{k}^{n}|>4\sqrt{n+1}\}$;
for other $l$ (i.e. $l\notin S$), generate $W_{k}^{n}$ given $%
\{|W_{k}^{n}|\leq4\sqrt{n+1}\}$. Note that at the end of Algorithm
1 and after simulating $W_{k}^{n}$ for $n \leq N_1$ one can compute
$$K_{\alpha}=2^{2\alpha+1}\sum_{n=0}^{ N_{1} }2^{-n(1/2-\alpha)}V^{n}+2^{2\alpha+3}C\frac{%
2^{-1/2(N_1+1)(1/2-\alpha)}}{1-2^{-1/2(1/2-\alpha )}},$$
where $C=\max_{n\geq N_1+1}\{2^{-n/2(1/2-\alpha)}\sqrt{n+1}\}$.

\section{Tolerance-Enforced Simulation for Bounds on $2\protect%
\alpha $-H\"{o}lder Norms of L\'{e}vy Areas} \label{sec:sim_levy}
The simulation of $N_{2}$, is a
lot more complicated, comparing to $N_1$, because there is fair amount of dependence on the
structure of the $L_{i,j}^{n}\left( k\right) $'s as one varies $n$. Let us
provide a general idea of our simulation procedure in order to set the stage
for the definitions and estimates that must be studied first.

Define
\begin{equation*}
\mathcal{F}_{n}\mathcal{=}\sigma\left\{(W_{i,k}^{m}:1\leq k\leq2^{m-1}):m\leq n\right\}.
\end{equation*}
and for the conditional expectation given $\mathcal{F}_{n}$ we write%
\begin{equation*}
E_{n}[\text{ }\cdot\text{ }]:=E[\text{ }\cdot\text{ }|\text{ }\mathcal{F}%
_{n}].
\end{equation*}
Suppose we have simulated $\{(W_{i,k}^{n}:0\leq k\leq2^{n}-1, 1\leq i \leq d^{\prime}):n\leq N\}$ for some
$N$ and define
\begin{eqnarray*}
\tau_{1}\left( N\right) =\inf\{n\geq
N+1&:& |L_{i,j}^{n}(m)-L_{i,j}^{n}(l)|>(m-l)^{\beta}\Delta_{n}^{2\alpha}\\
&&\text{ for some }0\leq l<m\leq2^{n-1}\}.
\end{eqnarray*}
Because of Lemma \ref{Lem_L_Abs_Sum} we have that the event $%
\{\tau_{1}\left( N\right) =\infty\}$ has positive probability. In what follows, we will
explain how to simulate a Bernoulli random variable with probability of success $%
P(\tau_{1}\left( N\right) =\infty|\mathcal{F}_{N})$. If such Bernoulli
is a success, then we have that $N_{2}=N$ and we would have
basically concluded the difficult part of the simulation procedure (the rest
of the process can be simulated under a series of conditioning events whose probability
increases to one as $n$ grows). If the Bernoulli is a
failure (i.e.\ its value is zero), then we will find $\tau_1({N})$ and simulte all the information up to $\tau_1(N)$.
We repeat the above Bernoulli trial with updated probability of success until we obtain a successful Bernoulli trial.

Now, part of the problem is that Algorithm I has been already executed, so $%
N\geq N_{1}$, in other words, while the random variables $%
\{W_{i,k}^{n}:1\leq k\leq2^{n-1}\}$ are independent (for fixed $n>N$), they are
no longer identically distributed. Instead, $W_{i,k}^{n}$ is standard
Gaussian conditional on the event $\{|W_{i,k}^{n}|\leq4\sqrt{n+1}\}$.
Nevertheless, if $n$ is large enough, all of the events $\{|W_{i,k}^{n}|\leq4%
\sqrt{n+1}\}$ will occur with high probability. So, we shall first proceed
to explain how to simulate a Bernoulli random variable with probability of
success $P(\tau_{1}\left( n^{\prime}\right) =\infty|\mathcal{F}%
_{n^{\prime}}) $ assuming $n^{\prime}$ is a deterministic number. The
procedure actually will produce both the outcome of the Bernoulli trial and
if such outcome is a failure (i.e. $\tau_{1}\left( n^{\prime}\right) <\infty$%
), also the sample path
\begin{equation*}
\{W_{i,k}^{m}:1\leq k\leq2^{m-1},n^{\prime}<m\leq\tau_{1}\left( n^{\prime
}\right) \}.
\end{equation*}

Our procedure is based on acceptance / rejection using a carefully chosen
proposal distribution for the $W_{i,k}^{n}$'s, $n\geq n^{\prime}$ based on exponential tilting
of $L_{i,j}^{n}\left( k\right) $'s, conditional on $\mathcal{F}%
_{n^{\prime}}$. To this end, we will need to compute the conditional moment generating function (conditional on $\mathcal{F}_{n^{\prime}}$) of $%
L_{i,j}^{n}\left( k\right)$'s and the family of distributions induced over
$W_{i,k}^{n}$'s and $W_{j,k}^{n}$'s under the exponentially tilting.
This will be done in Section \ref{Sub_CMGF_Tilting}.
Then, we need some large deviation estimates
to bound the likelihood ratio of a certain randomization procedure. These
bounds are developed in Section \ref{Sub_Cond_LD}. These are the main elements
needed to simulate $N_2$ together with the wavelet construction.  We introduce the actual randomization
procedure and the details of the algorithm in Section \ref{Sub_TES}.

\subsection{Conditional Moment Generating Functions and
Associated Exponential Tilting\label{Sub_CMGF_Tilting}}

In this section we characterize the distribution of $\{(W_{i,k}^{n+m}:1\leq
k\leq2^{n+m-1}): m\geq 1\}$ under the exponential tilting conditional on $\mathcal{F}_n$.

In order to reduce the length of some of the equations that follow, we
write, for each $r\in\{1,2,...,2^{n}\}$,
\begin{equation}
\Lambda_{i}^{n}(t_{r}^{n}):=Z_{i}(t_{r}^{n})-Z_{i}(t_{r-1}^{n}).
\label{Eq:INC_1}
\end{equation}
Then we have the following recursive relations for $\Lambda_i^n(t_r^n)$'s.

\begin{lemma} \label{lm:lambda}
For $k=1,2,....,2^{n-1}$
\begin{align*}
\Lambda_{i}^{n}(t_{2k-1}^{n}) & =\frac{1}{2}\Lambda_{i}^{n-1}(t_{k}^{n-1})+%
\Delta_{n+1}^{1/2}W_{i,k}^{n}. \\ %\label{Eq:INC_From_W} \\
\Lambda_{i}^{n}(t_{2k}^{n}) & =\frac{1}{2}\Lambda_{i}^{n-1}(t_{k}^{n-1})-%
\Delta_{n+1}^{1/2}W_{i,k}^{n},  %\notag
\end{align*}
\end{lemma}

From Lemma \ref{lm:lambda}, we can see that%
$$\mathcal{F}_n=\sigma\left\{Z(t_{k^{\prime}}^m)-Z(t_k^m): 0\leq k<k^{\prime}\leq2^{m-1}, m \leq n\right\}.$$

Assume that $k<k^{\prime}$, we will iteratively compute the conditional moment generating function as%
\begin{align}
& E_{n}\left[\exp\left( \theta_{0}\left\{L_{i,j}^{n+m}\left( k^{\prime}\right)
-L_{i,j}^{n+m}\left( k\right) \right\}\right) \right]  \label{Eq_iterative_exp} \\
& =E_{n}\left[E_{n+1}\left[...E_{n+m-1}\left[\exp\left( \theta_{0}\left\{L_{i,j}^{n+m}\left(
k^{\prime}\right) -L_{i,j}^{n+m}\left( k\right) \right\}\right) \right]...\right]\right].  \notag
\end{align}
Recall that, for $1\leq k\leq2^{n-1}$,%
\begin{equation*}
L_{i,j}^{n}\left( k\right) =\sum_{r=1}^{k}\Lambda_{i}^{n}\left(
t_{2r-1}^{n}\right) \Lambda_{j}^{n}\left( t_{2r}^{n}\right) .
\end{equation*}
We shall start from the expectation of $\exp\left(\theta_{0}\Lambda_{i}^{n+m}\left( t_{2r-1}^{n+m}\right)
\Lambda_{j}^{n+m}\left( t_{2r}^{n+m}\right) \right)$ conditional on $\mathcal{F}_{n+m-1}$.

\begin{corollary}
\label{Cor_E_nm1}%
For $i\neq j$,
\begin{align*}
& E_{n+m-1}\left[\exp\left(\theta_{0}\Lambda_{i}^{n+m}\left( t_{2r-1}^{n+m}\right)
\Lambda_{j}^{n+m}\left( t_{2r}^{n+m}\right) \right)\right] \\
=&\left( 1-\theta_{0}^{2}\Delta_{n+m}^{2}\right) ^{-1/2}\exp\left(
\theta_{1}\Lambda_{j}^{n+m-1}\left(t_{r}^{n+m-1}\right)%
\Lambda_{i}^{n+m-1}\left(t_{r}^{n+m-1}\right)\right) \\
& \times\exp\left(
\eta_{1}\Lambda_{j}^{n+m-1}\left(t_{r}^{n+m-1}\right)^{2}+\eta_{1}%
\Lambda_{i}^{n+m-1}\left(t_{r}^{n+m-1}\right)^{2}\right) ,
\end{align*}
where%
\begin{equation*}
\theta_{1}:=\theta_{0}\left(1-\theta_{0}^{2}\Delta_{n+m+1}^{2}\right)^{-1}/4, \text{ \ }%
\eta_{1}:=\theta_{0}^{2}\left( 1-\theta_{0}^{2}\Delta_{n+m+1}^{2}\right)
^{-1}\Delta_{n+m}/8.
\end{equation*}

Moreover, define
\begin{align*}
& P_{n+m,t_{r}^{n+m}}^{\prime}\left( W_{i,r}^{n+m}\in A,W_{j,r}^{n+m}\in
B\right) \\
=&\frac{E_{n+m-1}\left[ I\left(W_{i,r}^{n+m}\in A,W_{j,r}^{n+m}\in B\right)
\exp\left(\theta_{0}\Lambda_{i}^{n+m}\left( t_{2r-1}^{n+m}\right) \Lambda
_{j}^{n+m}\left( t_{2r}^{n+m}\right) \right)\right]}{E_{n+m-1}\left[\exp\left(\theta_{0}\Lambda
_{i}^{n+m}\left( t_{2r-1}^{n+m}\right) \Lambda_{j}^{n+m}\left(
t_{2r}^{n+m}\right) \right)\right]},
\end{align*}
then under $P_{n+m,t_{r}^{n+m}}^{\prime}$, and given $\mathcal{F}_{n+m-1}$,
we have that $(W_{i,r}^{n+m},W_{j,r}^{n+m})$ follows a Gaussian distribution
with covariance matrix%
\begin{equation*}
\Sigma_{n+m}^{i,j}\left( t_{r}^{n+m+1}\right) =\frac{1}{1-\theta_{0}^{2}%
\Delta_{n+m+1}^{2}}\left(
\begin{array}{cc}
1 & -\theta_{0}\Delta_{n+m+1} \\
-\theta_{0}\Delta_{n+m+1} & 1%
\end{array}
\right) ,
\end{equation*}
and mean vector%
\begin{equation*}
\mu_{n+m}^{i,j}\left( t_{r}^{n+m}\right) =\Sigma_{n+m}^{i,j}\left(
t_{r}^{n+m}\right) \left(
\begin{array}{c}
\theta_{0}\Delta_{n+m+1}^{1/2}\Lambda_{j}^{n+m-1}(t_{r}^{n+m-1})/2 \\
-\theta_{0}\Delta_{n+m+1}^{1/2}\Lambda_{i}^{n+m-1}(t_{r}^{n+m-1})/2%
\end{array}
\right) .
\end{equation*}
\end{corollary}

So, from Corollary \ref{Cor_E_nm1} we conclude that%
\begin{align}
& E_{n+m-1}\left[\exp\left(\theta_{0}\sum_{r=k+1}^{k^{\prime}}\Lambda_{i}^{n+m}\left(
t_{2r-1}^{n+m}\right) \Lambda_{j}^{n+m}\left( t_{2r}^{n+m}\right) \right)\right]  \notag \\
=&\left( 1-\theta_{0}^{2}\Delta_{n+m+1}^{2}\right) ^{-\left( k^{\prime
}-k\right)
/2}\exp\left(\theta_{1}\sum_{r=k+1}^{k^{\prime}}%
\Lambda_{j}^{n+m-1}\left(t_{r}^{n+m-1}\right)\Lambda_{i}^{n+m-1}\left(t_{r}^{n+m-1}\right)\right)  \notag
\\
&\times
\exp\left(\eta_{1}\sum_{r=k+1}^{k^{\prime}}%
\Lambda_{j}^{n+m-1}\left(t_{r}^{n+m-1}\right)^{2}+\eta_{1}\sum_{r=k+1}^{k^{\prime}}%
\Lambda_{i}^{n+m-1}\left(t_{r}^{n+m-1}\right)^{2}\right).  \label{Eq_Inner_E}
\end{align}
If $m\geq2$, we can continue taking the corresponding conditional
expectation given $\mathcal{F}_{n+m-2}$. Due to the recursive nature of (\ref%
{Eq_iterative_exp}) and the linear and quadratic terms that arise in (\ref%
{Eq_Inner_E}), it is convenient to consider%
\begin{align}
& \sum_{r=1}^{2^{n+m-1}}\theta_{1}\left( t_{r}^{n+m-1}\right) \Lambda
_{j}^{n+m-1}\left(t_{r}^{n+m-1}\right)\Lambda_{i}^{n+m-1}\left(t_{r}^{n+m-1}\right)
\label{Aux_Big_Sum_1} \\
& +\sum_{r=1}^{2^{n+m-1}}\eta_{1}\left( t_{r}^{n+m-1}\right) \left(\Lambda
_{j}^{n+m-1}\left(t_{r}^{n+m-1}\right)^{2}+\Lambda_{j}^{n+m-1}\left(t_{r}^{n+m-1}\right)^{2}\right),
\notag
\end{align}
where
\begin{equation*}
\theta_{1}\left( t_{r}^{n+m-1}\right) =\theta_{1}\times I\left(
r\in\{k+1,...,k^{\prime}\}\right),
\end{equation*}
\begin{equation*}
\eta_{1}\left( t_{r}^{n+m-1}\right) =\eta_{1}\times I\left(
r\in\{k+1,...,k^{\prime}\}\right).
\end{equation*}
We also introduce the following notations to simply the presentation of our tilting parameters.
 Due to the difference in the recursive relation for $\Lambda_i^n(t_r^n)$
between odd and even $r$'s, we recursively define for $l=2,...,m$.
\begin{align}
\theta_{+}^{l}\left( t_{r}^{n+m-l}\right) & =\theta_{l-1}\left(
t_{2r-1}^{n+m-l+1}\right) +\theta_{l-1}\left( t_{2r}^{n+m-l+1}\right),
\label{Many_Defs} \\
\theta_{-}^{l}\left( t_{r}^{n+m-l}\right) &= \theta_{l-1}\left(
t_{2r-1}^{n+m-l+1}\right) -\theta_{l-1}\left( t_{2r}^{n+m-l+1}\right),
\notag \\
\eta_{+}^{l}\left( t_{r}^{n+m-l}\right) & =\eta_{l-1}\left(
t_{2r-1}^{n+m-l+1}\right) +\eta_{l-1}\left( t_{2r}^{n+m-l+1}\right),  \notag
\\
\eta_{-}^{l}\left( t_{r}^{n+m-l}\right) & =\eta_{l-1}\left(
t_{2r-1}^{n+m-l+1}\right) -\eta_{l-1}\left( t_{2r}^{n+m-l+1}\right),  \notag
\\
\rho_{l}\left( t_{r}^{n+m-l}\right) & =\frac{\Delta_{n+m-l+2}\theta
_{+}^{l}\left( t_{r}^{n+m-l}\right) }{ 1-2\Delta_{n+m-l+2}\eta_{+}^{l}\left(
t_{r}^{m+n-l}\right) },  \notag \\
h_{l}\left( t_{r}^{n+m-l}\right) & =\frac{\Delta_{n+m-l+2}}{\left(
1-2\Delta_{n+m-l+2}\eta_{+}^{l}\left( t_{r}^{m+n-l}\right) \right) \left(
1-\rho_{l}\left( t_{r}^{n+m-l}\right) ^{2}\right) },  \notag
\end{align}
and set%
\begin{align*}
& \eta_{l}\left( t_{r}^{m+n-l}\right) \\
= &\frac{\eta_{+}^{l}\left( t_{r}^{m+n-l}\right) }{4} \\
& +\frac{h_{l}\left( t_{r}^{m+n-l}\right) }{8}\{ \theta_{-}^{l}\left(
t_{r}^{m+n-l}\right) ^{2} +4\eta_{-}^{l}\left( t_{r}^{m+n-l}\right)^{2} \\
& +4\theta_{-}^{l}\left( t_{r}^{m+n-l}\right) \eta_{-}^{l}\left(
t_{r}^{m+n-l}\right) \rho_{l}\left( t_{r}^{m+n-l}\right) \} , \\
& \theta_{l}\left( t_{r}^{m+n-l}\right) \\
=& \frac{\theta_{+}^{l}\left( t_{r}^{m+n-l}\right) }{4} \\
& +h_{l}\left( t_{r}^{m+n-l}\right) \{ \theta_{-}^{l}\left(
t_{r}^{m+n-l}\right) \eta_{-}^{l}\left( t_{r}^{m+n-l}\right) \\
& +\frac{1}{4}\theta_{-}^{l}\left( t_{r}^{m+n-l}\right) ^{2}g_{l}\left(
t_{r}^{m+n-l}\right) +\eta_{-}^{l}\left( t_{r}^{m+n-l}\right) ^{2}\rho
_{l}\left( t_{r}^{m+n-l}\right) \} .
\end{align*}
Finally, we decompose (\ref{Aux_Big_Sum_1}) into two parts (the cross term and the quadratic term) by defining
\begin{align*}
A\left( t_{r}^{n+m-l}\right) & =\theta_{l-1}\left( t_{2r-1}^{n+m-l+1}\right)
\Lambda_{j}^{n+m-l+1}(t_{2r-1}^{n+m-l+1})\Lambda
_{i}^{n+m-l+1}(t_{2r-1}^{n+m-l+1}) \\
& +\theta_{l-1}\left( t_{2r}^{n+m-l+1}\right)
\Lambda_{j}^{n+m-l+1}(t_{2r}^{n+m-l+1})%
\Lambda_{i}^{n+m-l+1}(t_{2r}^{n+m-l+1}), \\
B\left( t_{r}^{n+m-l}\right) & =\eta_{l-1}\left( t_{2r-1}^{n+m-l+1}\right)
(\Lambda_{j}^{n+m-l+1}(t_{2r-1}^{n+m-l+1})^{2}+%
\Lambda_{j}^{n+m-l+1}(t_{2r-1}^{n+m-l+1})^{2}) \\
& +\eta_{l-1}\left( t_{2r}^{n+m-l+1}\right)
(\Lambda_{j}^{n+m-l+1}(t_{2r}^{n+m-l+1})^{2}+%
\Lambda_{j}^{n+m-l+1}(t_{2r}^{n+m-l+1})^{2}),
\end{align*}
and
\begin{equation*}
C\left( t_{r}^{n+m-l}\right) =\left( 1-2\Delta_{n+m-l+1}\eta_{+}^{l}\left(
t_{r}^{m+n-l}\right) \right) ^{-1}\left( 1-\rho_{l}\left(
t_{r}^{m+n-l}\right) ^{2}\right) ^{-1/2}.
\end{equation*}
Then (\ref{Aux_Big_Sum_1}) can be written as%
\begin{equation*}
\sum_{r=1}^{2^{n+m-2}}\left(A\left( t_{r}^{n+m-2}\right) +B\left(
t_{r}^{n+m-2}\right) \right),
\end{equation*}
and the following result is key in evaluating (\ref{Eq_iterative_exp}).

\begin{corollary}
\label{Cor_E_nmL}For $i\neq j$, $l=2,3,...,m$ and $r=1,2,...,2^{n+m-l}$%
\begin{align*}
& E_{n+m-l}\left[\exp\left(A\left( t_{r}^{n+m-l}\right) +B\left( t_{r}^{n+m-l}\right) \right)\right]
\\
= &C\left( t_{r}^{n+m-l}\right) \exp\left( \theta_{l}\left(
t_{r}^{m+n-l}\right) \Lambda_{i}\left( t_{r}^{m+n-l}\right)
\Lambda_{j}\left( t_{r}^{m+n-l}\right) \right) \\
& \times\exp\left(\eta_{l}\left( t_{r}^{m+n-l}\right) \left(\Lambda_{i}\left(
t_{r}^{m+n-l}\right) ^{2}+\Lambda_{j}\left( t_{r}^{m+n-l}\right)
^{2}\right)\right).
\end{align*}
Moreover, define
\begin{align*}
& P_{n+m-l+1,t_{r}^{n+m-l+1}}^{\prime}\left( W_{i,r}^{n+m-l+1}\in
A,W_{j,r}^{n+m-l+1}\in B\right) \\
=& \frac{E_{n+m-l}\left[ I\left(W_{i,r}^{n+m-l+1}\in A,W_{j,r}^{n+m-l+1}\in
B\right) \exp\left(A\left( t_{r}^{n+m-l}\right) +B\left( t_{r}^{n+m-l}\right) \right)\right]}{%
E_{n+m-l}\left[\exp\left(A\left( t_{r}^{n+m-l}\right) +B\left( t_{r}^{n+m-l}\right) \right)\right]},
\end{align*}
then under $P_{n+m-l+1,t_{r}^{n+m-l+1}}^{\prime}$, and given $\mathcal{F}%
_{n+m-l}$, we have that $(W_{i,r}^{n+m-l+1},W_{j,r}^{n+m-l+1})$ follows a
Gaussian distribution with covariance matrix%
\begin{align*}
& \Sigma_{n+m-l+1}^{i,j}\left( t_{r}^{n+m-l+1}\right) \\
= & \frac{1}{1-\rho_{l}\left( t_{r}^{m+n-l}\right) ^{2}} \\
&\times \left(
\begin{array}{cc}
\left( 1-2\Delta_{n+m-l+1}\eta_{+}^{l}\left( t_{r}^{m+n-l}\right) \right) ^{-1}
& g_{l}\left( t_{r}^{m+n-l}\right) \\
g_{l}\left( t_{r}^{m+n-l}\right) & \left(
1-2\Delta_{n+m-l+1}\eta_{+}^{l}\left( t_{r}^{m+n-l}\right) \right) ^{-1}%
\end{array}
\right)
\end{align*}
where
$g_l(t_r^{m+n-l})=\Delta_{n+m-l+2}\theta_+^l\left(t_r^{n+m-l}\right)\left(1-2\Delta_{n+m-l+2}\eta_+^l\left(t_r^{n+m-l}\right)\right)^{-2}$.
and mean vector%
\begin{align*}
& \mu_{n+m}^{i,j}\left( t_{r}^{n+m-l+1}\right) \\
= & \Delta_{n+m-l+1}^{1/2}\Sigma_{n+m-l+1}^{i,j}\left( t_{r}^{n+m-l+1}\right)
\\
&\times \left(
\begin{array}{c}
\Lambda_{i}\left( t_{r}^{n+m-l}\right) \eta_{-}^{l}\left(
t_{r}^{n+m-l}\right) +\frac{1}{2}\Lambda_{j}\left( t_{r}^{n+m-l}\right)
\theta_{-}^{l}\left( t_{r}^{n+m-l}\right) \\
\Lambda_{j}\left( t_{r}^{n+m-l}\right) \eta_{-}^{l}\left(
t_{r}^{n+m-l}\right) +\frac{1}{2}\Lambda_{i}\left( t_{r}^{n+m-l}\right)
\theta_{-}^{l}\left( t_{r}^{n+m-l}\right)%
\end{array}
\right) .
\end{align*}
\end{corollary}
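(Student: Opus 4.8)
The plan is to prove the identity by induction on $l$, with the base case $l=1$ supplied by Corollary~\ref{Cor_E_nm1} (applied termwise and multiplied over $r$, which is precisely~(\ref{Eq_Inner_E}) together with the indicator-weighted definitions that follow it), so that it remains only to justify the passage from level $n+m-l+1$ to level $n+m-l$ for $l\ge 2$. The starting observation is that, conditionally on $\mathcal{F}_{n+m-l}$, the only new randomness entering the four increments $\Lambda_i^{n+m-l+1}(t_{2r-1}^{n+m-l+1})$, $\Lambda_i^{n+m-l+1}(t_{2r}^{n+m-l+1})$ and their $j$-counterparts is the pair of independent standard normals $W_{i,r}^{n+m-l+1}$, $W_{j,r}^{n+m-l+1}$, through the refinement relations~(\ref{Eq:INC_From_W}); moreover, for distinct $r$ these pairs are conditionally independent, so $E_{n+m-l}$ of the exponential of $\sum_r \big(A(t_r^{n+m-l})+B(t_r^{n+m-l})\big)$ factors into a product over $r$, and it suffices to handle a single index $r$.

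Substituting~(\ref{Eq:INC_From_W}) into $A(t_r^{n+m-l})+B(t_r^{n+m-l})$, the symmetric part of the two children (positions $2r-1$ and $2r$) produces the combinations $\theta_+^l,\eta_+^l$ while the antisymmetric part produces $\theta_-^l,\eta_-^l$; collecting terms shows that $A+B$ is a quadratic polynomial in $(W_{i,r}^{n+m-l+1},W_{j,r}^{n+m-l+1})$ with $\mathcal{F}_{n+m-l}$-measurable coefficients, whose purely quadratic part is governed by $\theta_+^l$ (the $W_iW_j$ term) and $\eta_+^l$ (the $W_i^2+W_j^2$ terms), whose linear part is $\Delta_{n+m-l+1}^{1/2}$ times a combination of $\theta_-^l,\eta_-^l,\Lambda_i(t_r^{n+m-l}),\Lambda_j(t_r^{n+m-l})$, and which leaves a $W$-free remainder equal to $\tfrac14\theta_+^l\,\Lambda_i(t_r^{n+m-l})\Lambda_j(t_r^{n+m-l})+\tfrac14\eta_+^l\big(\Lambda_i(t_r^{n+m-l})^2+\Lambda_j(t_r^{n+m-l})^2\big)$. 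Writing $A+B=w^{\top}Sw+b^{\top}w+c$ with $w=(W_{i,r}^{n+m-l+1},W_{j,r}^{n+m-l+1})$, I would then evaluate $E_{n+m-l}[\exp(A+B)]$ as a two-dimensional Gaussian integral: against the standard normal density of $w$ it equals $(\det Q)^{-1/2}\exp(\tfrac12 b^{\top}Q^{-1}b)$, where $Q=I-2S$. Here $\rho_l$ is the off-diagonal entry of $Q$ normalized by its (common) diagonal entry $1-2\Delta_{n+m-l+1}\eta_+^l$, $h_l$ is the common scalar appearing in $Q^{-1}$, and $(\det Q)^{-1/2}$ is exactly $C(t_r^{n+m-l})$; expanding $\tfrac12 b^{\top}Q^{-1}b$, adding the $W$-free remainder, and reading off the coefficients of $\Lambda_i\Lambda_j$ and of $\Lambda_i^2+\Lambda_j^2$ at $t_r^{n+m-l}$ reproduces the recursions defining $\theta_l$ and $\eta_l$ (the off-diagonal entry $g_l$ of $\Sigma$ being likewise read off from $Q^{-1}$). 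For the final assertion, the law $P'_{n+m-l+1,t_r^{n+m-l+1}}$ is by construction the standard bivariate normal reweighted by $\exp(A+B)$, i.e.\ proportional to $\exp(-\tfrac12 w^{\top}Qw+b^{\top}w)$; completing the square identifies it as the Gaussian with precision matrix $Q$, hence covariance $\Sigma_{n+m-l+1}^{i,j}(t_r^{n+m-l+1})=Q^{-1}$ and mean $Q^{-1}b$, which is the displayed $\mu$.

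I expect the main obstacle to be purely computational: performing the substitution and the $2\times2$ matrix inversion while correctly tracking the signs that distinguish the odd child $t_{2r-1}^{n+m-l+1}$ from the even child $t_{2r}^{n+m-l+1}$, and verifying that $Q=I-2S$ is positive definite, so that both the Gaussian integral and the tilted law make sense — equivalently, that $1-2\Delta_{n+m-l+1}\eta_+^l>0$ and $1-\rho_l^2>0$ at every level that arises. This positivity is not automatic; it holds along the run of the algorithm because the initial tilting parameter $\theta_0$ is chosen small enough, subject to the feasibility constraints established in Sections~\ref{Sub_Cond_LD} and~\ref{Sub_TES}, and I would carry it as a standing hypothesis propagated through the induction alongside the identities above.
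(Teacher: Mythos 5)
Your proposal is correct and follows essentially the same route as the paper: substitute the refinement relations (\ref{Eq:INC_From_W}), collect the resulting quadratic form in $(W_{i,r}^{n+m-l+1},W_{j,r}^{n+m-l+1})$ into symmetric ($\theta_+^l,\eta_+^l$) and antisymmetric ($\theta_-^l,\eta_-^l$) parts plus a $W$-free remainder, and evaluate the bivariate Gaussian exponential moment — your matrix identity $(\det Q)^{-1/2}\exp(\tfrac12 b^{\top}Q^{-1}b)$ with $Q=I-2S$ is exactly the paper's Lemma \ref{Lem_Gen_Tilting_Gaussian} in coordinate-free notation, and the positivity constraints you flag are the hypotheses $|2c_i|<1$, $|b|<(1-2c_1)(1-2c_2)$ of that lemma, verified later in the proof of Lemma \ref{Lemma_MGF_Clean}.
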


Using Corollary \ref{Cor_E_nmL} we conclude that%
\begin{align*}
& E_{n+m-l}\left[\exp\left(\sum_{r=1}^{2^{n+m-l}}\left(A\left( t_{r}^{n+m-l}\right) +B\left(
t_{r}^{n+m-l}\right) \right)\right)\right] \\
=& {\displaystyle\prod\limits_{r=1}^{2^{n+m-l}}} C\left(
t_{r}^{n+m-l}\right) \times\exp\left( \sum_{r=1}^{2^{n+m-l-1}}\left(A\left(
t_{r}^{n+m-l-1}\right) +B\left( t_{r}^{n+m-l-1}\right) \right)\right).
\end{align*}
Therefore, combining Corollary \ref{Cor_E_nm1} and repeatedly iterating the
previous expression we conclude that
\begin{align}
& E_{n}\left[\exp(\theta_{0}\{L_{i,j}^{n+m}(k^{\prime})-L_{i,j}^{n+m}(k)\})\right]
\notag \\
=& \left( 1-\theta_{0}^{2}\Delta_{n+m}^{2}\right) ^{-\left( k^{\prime
}-k\right) /2}{\displaystyle\prod\limits_{l=2}^{m}} {\displaystyle%
\prod\limits_{r=1}^{2^{n+m-l}}} C\left( t_{r}^{n+m-l}\right)  \notag \\
& \times\exp\left(\sum_{r=1}^{2^{n}}\theta_{m}\left( t_{r}^{n}\right)
\Lambda_{i}\left( t_{r}^{n}\right) \Lambda_{j}\left( t_{r}^{n}\right)
+\sum_{r=1}^{2^{n}}\eta_{m}\left( t_{r}^{n}\right) \left\{\Lambda_{i}\left(
t_{r}^{n}\right) ^{2}+\Lambda_{j}\left( t_{r}^{n}\right) ^{2} \right\}\right).
\label{E_n_FIN}
\end{align}

\subsection{Conditional\ Large Deviations Estimates for $%
L_{i,j}^{n}\left( k\right)$\label{Sub_Cond_LD}}

We wish to estimate, for $1\leq i,j \leq d^{\prime}$, $i\neq j$, $k^{\prime}>k$ and $k^{\prime},k\in
\{0,1,...,2^{n+m-1}\},$%
\begin{align*}
& P_{n}\left( |L_{i,j}^{n+m}(k^{\prime})-L_{i,j}^{n+m}(k)|\text{ }>\left(
k^{\prime}-k\right) ^{\beta}\Delta_{n+m}^{2\alpha}\right) \\
\leq&\exp(-\theta_{0}\left( k^{\prime}-k\right) ^{\beta}\Delta
_{n+m}^{2\alpha})\times\{E_{n}[\exp(\theta_{0}\{L_{i,j}^{n+m}(k^{\prime
})-L_{i,j}^{n+m}(k)\})] \\
& +E_{n}[\exp(-\theta_{0}\{L_{i,j}^{n+m}(k^{\prime})-L_{i,j}^{n+m}(k)\})]\}.
\end{align*}
We borrow some intuition from the proof of Lemma \ref{Lem_L_Abs_Sum} and
select%
\begin{equation}
\theta_{0}(m,k^{\prime},k):=\theta_{0}=\frac{\gamma}{\left( k^{\prime
}-k\right) ^{1/2}\Delta_{n}^{2\alpha^{\prime}}\Delta_{m}}.
\label{Theta_0_select}
\end{equation}
We will drop the dependence on $(m,k^{\prime},k)$ for brevity. In addition,
we pick $\gamma\leq1/4$ and $\alpha^{\prime}\in(\alpha,1/2)$ so that%
\begin{equation*}
\exp(-\theta_{0}\left( k^{\prime}-k\right) ^{\beta}\Delta_{n+m}^{2\alpha
})=\exp(-\gamma\left( k^{\prime}-k\right)
^{\beta-1/2}\Delta_{n}^{2(\alpha-\alpha^{\prime})}\Delta_{m}^{2\alpha-1})
\end{equation*}
Our next task is to control the $E_{n}\left[\exp(\theta_{0}\{L_{i,j}^{n+m}(k^{%
\prime})-L_{i,j}^{n+m}(k)\})\right]$, which is the purpose of the following result,
proved in the appendix to this section.

\begin{lemma}
\label{Lemma_MGF_Clean} For $i\neq j$, suppose that $\theta_{0}$ is chosen according to (\ref%
{Theta_0_select}), and $n$ is chosen such that
\begin{equation}
\max_{r\leq2^{n}}\{\left\vert \Lambda_{i}\left( t_{r}^{n}\right) \right\vert
,\left\vert \Lambda_{j}\left( t_{r}^{n}\right) \right\vert \}\leq\Delta
_{n}^{\alpha^{\prime}}  \label{Cond_on_n}
\end{equation}
and for $\varepsilon_{0}\in\left(0,1/2\right) $
\begin{equation}  \label{Cond_on_n_2}
\left\vert
\sum_{r=l+1}^{m}\Lambda_{i}(t_{r}^{n})\Lambda_{j}(t_{r}^{n})\right\vert
\leq\varepsilon_{0}(m-l)^{\beta}\Delta _{n}^{2\alpha^{\prime}}
\mbox{ for
all } 0\leq l < m \leq2^{n}
\end{equation}
with $\alpha^{\prime}\in\left( \alpha,1/2\right) $, then
\begin{equation*}
E_{n}[\exp(\theta_{0}\{L_{i,j}^{n+m}(k^{\prime})-L_{i,j}^{n+m}(k)\})]\leq
4\exp\left( \varepsilon_{0}\gamma(k^{\prime}-k)^{\beta-1/2}\right) .
\end{equation*}
\end{lemma}

\textbf{Remark:} It is very important to note that due to Lemma \ref%
{lm:N1_finite_mean} we can always continue simulating the $W_{i,k}^{n}$'s
(maybe conditional on $\left\{\left\vert W_{i,k}^{n}\right\vert \leq4\sqrt{n+1}\right\}$
in case $n>N_{1}$) to make sure that (\ref{Cond_on_n}) holds for some $n$.
Similarly, condition (\ref{Cond_on_n_2}) can be simultaneously enforced with
(\ref{Cond_on_n}) because of Lemma \ref{Lem_L_Abs_Sum}. Actually, Lemma \ref%
{lm:N1_finite_mean} and Lemma \ref{Lem_L_Abs_Sum} indicate that conditions (%
\ref{Cond_on_n}) and (\ref{Cond_on_n_2}) will occur eventually for all $n$
larger than some random threshold. Our simulation algorithms will
ultimately detect such threshold, but Lemma \ref{Lemma_MGF_Clean} does not
require that we know that threshold.\newline

As a consequence of Lemma \ref{Lemma_MGF_Clean}, using Chernoff's bound, we
obtain the following proposition.

\begin{proposition}
\label{Prop_Nice_LD} For $i\neq j$, if $n$ is chosen such that (\ref{Cond_on_n}) and (\ref%
{Cond_on_n_2}) hold, then
\begin{align*}
& P_{n}\left( |L_{i,j}^{n+m}(k^{\prime})-L_{i,j}^{n+m}(k)|\text{ }>\left(
k^{\prime}-k\right) ^{\beta}\Delta_{n+m}^{2\alpha}\right) \\
\leq& 8\exp\left( -\frac{1}{2}\gamma\left( k^{\prime}-k\right)
^{\beta-1/2}\Delta
_{n}^{2(\alpha-\alpha^{\prime})}\Delta_{m}^{2\alpha-1}\right) .
\end{align*}
\end{proposition}

\subsection{Joint Tolerance-Enforced Simulation for $\protect\alpha$-H\"{o}%
lder Norms and Proof of Theorem \protect\ref{th:main2}.\label{Sub_TES}}

Define%
\begin{eqnarray*}
\mathcal{C}_{n}(m)&=&\{|L_{i,j}^{n+m}(k^{\prime})-L_{i,j}^{n+m}(k)|>(k^{\prime
}-k)^{\beta}\Delta_{n+m}^{2\alpha}\\
&&\text{ for some }0\leq k<k^{\prime
}<2^{n+m-1}, 1\leq i,j\leq d^{\prime}, i\neq j\},
\end{eqnarray*}
and put $\tau_{1}\left( n\right) =\inf\{m\geq1:\mathcal{C}_{n}(m)$ occurs$\}$%
. We write $\mathcal{\bar{C}}_{n}(m)$ for the complement of $\mathcal{C}%
_{n}(m)$, so that
\begin{equation*}
P_{n}(\tau_{1}\left( n\right) <\infty)=\sum_{m=1}^{\infty}P\left( \mathcal{C}%
_{n}(m)\cap\cap_{l=1}^{m-1}\mathcal{\bar{C}}_{n}(l)\right) .
\end{equation*}

To facilitate the explanation, we next introduce a few more notations. Let
\begin{equation*}
\omega_{n:n+m}:=\{W_{i,k}^{l}:0\leq k\leq2^{n}-1,1\leq i\leq
d^{\prime},n<l\leq n+m\}.
\end{equation*}
In addition, define
\begin{align*}
v_{n}(k,k^{\prime}|m) :=& 8\exp\left( -\frac{1}{2}\gamma\left( k^{\prime
}-k\right) ^{\beta-1/2}\Delta_{n}^{2(\alpha-\alpha^{\prime})}\Delta
_{m}^{2\alpha-1}\right) \\
&\times I\left( 0\leq k<k^{\prime}\leq2^{n+m-1}\right) I\left( m\geq1\right)
\\
b_{n}(m) :=& \sum_{0\leq k<k^{\prime}\leq2^{m+n-1}}v_{n}(k,k^{\prime}|m) \\
q_{n}(k,k^{\prime}|m) :=& \frac{v_{n}(k,k^{\prime}|m)}{b_{n}(m)}
\end{align*}
and
\begin{equation*}
P_{n,m}^{i,j,k,k^{\prime}}\left( \mathcal{\omega}_{n:n+m}\in\cdot\right) =
\frac{E_{n}\left[I\left( \mathcal{\omega}_{n:n+m}\in\cdot\right) \exp\left(
\theta_{0}\{L_{i,j}^{n+m}(k^{\prime})-L_{i,j}^{n+m}(k)\}\right)\right] }{E_{n}\left[
\exp\left( \theta_{0}\{L_{i,j}^{n+m}(k^{\prime})-L_{i,j}^{n+m}(k)\}\right)\right] }.
\end{equation*}
We also denote
\begin{equation*}
\psi_{n}(m,i,j,k,k^{\prime}):=\log E_{n}\left[ \exp\left(
\theta_{0}\left\{L_{i,j}^{n+m}(k^{\prime})-L_{i,j}^{n+m}(k^{\prime})\right\}\right)\right]
\end{equation*}

Observe that
\begin{align*}
b_{n}\left( m\right) & = \sum_{0\leq k<k^{\prime}\leq2^{n+m-1}} 8\exp\left( -%
\frac{1}{2}\gamma\left( k^{\prime}-k\right)
^{\beta-1/2}\Delta_{n}^{2(\alpha-\alpha^{\prime})}\Delta_{m}^{2\alpha-1}%
\right) \\
& \leq2^{2(m+n)+3}\exp\left( -\frac{1}{2}\gamma\Delta_{n}^{2(\alpha
-\alpha^{\prime})}\Delta_{m}^{2\alpha-1}\right) .
\end{align*}
Thus, $b_{n}(m) \rightarrow0$ as $n \rightarrow\infty$. Then we can select
any probability mass function $\{g(m): m \geq1\}$, e.g. $g(m)=e^{-1}/(m-1)!$
for $m \geq1$, by assuming that $n$ is sufficiently large,
\begin{equation*}
g(m) \geq d^{\prime2} b_{n}(m)
\end{equation*}

Now consider the following procedure, which we called Procedure Aux, Aux for
``auxiliar", which is given for pedagogical purposes, because as we shall see
shortly it is not directly applicable but useful to understand the nature
of the method that we shall ultimately use.\newline

\noindent\textbf{Procedure Aux}

\textbf{Input: }We assume that we have simulated $\{(W_{i,k}^{n}:0\leq
k<2^{l}):l\leq n\}\}$.

\textbf{Output: }A Bernoulli $F$ with parameter $P_{n}\left( \tau
_{1}(n)<\infty\right) $, and if $F=1$, also
\begin{equation*}
\mathcal{\omega}_{n:\tau_1(n)}=\{W_{i,k}^{l}:1\leq k\leq2^{l}-1,1\leq i\leq
d^{\prime },n<l\leq \tau_1(n)\}
\end{equation*}
conditional on the event $\tau_{1}(n)<\infty$.

\textbf{Step 1}: Sample $M$ according to $g\left( m\right) $.

\textbf{Step 2}: Given $M=m$ sample $I$ and $J$ ($I\neq J$) uniformly
over the set $\{1,2,...,d^{\prime}\}$.Then, sample $%
K^{\prime},K $ from $q_{n}\left( k,k^{\prime}|m\right) $.

\textbf{Step 3}: Given $M=m$, $I=i$,$J=j$,$K=k$, and $K^{\prime}=k^{\prime}$%
, simulate $\omega_{n:n+m}$ from $P_{n,m}^{i,j,k,k^{\prime}}\left(
\cdot\right) $. Note that simulation from $P_{n,m}^{i,j,k,k^{\prime}}\left(
\cdot\right) $ can be done according to Corollary \ref{Cor_E_nmL}.

\textbf{Step 4}: Compute
\begin{align*}
&\Xi_{n}(m,i,j,k,k^{\prime},\omega_{n:n+m}) \\
=&\frac{1}{g(m)\left(d^{\prime}(d^{\prime}-1)\right)^{-1}q_{n}(k,k^{\prime}|m)\exp\left(
\theta_{0}\{L_{i,j}^{n+m}(k^{\prime})-L_{i,j}^{n+m}(k)\}-%
\psi_{n}(m,i,j,k,k^{\prime})\right) },
\end{align*}
and
\begin{equation*}
\mathcal{N}_{n}\left( m\right) =\sum_{1\leq i,j\leq d^{\prime}, i\neq j}\sum_{1\leq
h<h^{\prime}\leq2^{n+m-1}}I\left( \left\vert L_{i,j}^{n+m}(h^{\prime
})-L_{i,j}^{n+m}(h)\right\vert >(h-h^{\prime})^{\beta}\Delta_{n+m}^{2\alpha
}\right) .
\end{equation*}

\textbf{Step 5}: Simulate $U$ uniformly distributed on $[0,1]$ independent
of everything else and output
\begin{align*}
F=& I\{ U<I\left( \left\{ \left\vert
L_{i,j}^{n+m}(k^{\prime})-L_{i,j}^{n+m}(k)\right\vert
>(k-k^{\prime})^{\beta}\Delta_{n+m}^{2\alpha }\right\} \cap\cap_{l=1}^{m-1}%
\mathcal{\bar{C}}_{n}(l)\right) \\
&\times \Xi_{n}(m,i,j,k,k^{\prime},\omega_{n:n+m})/\mathcal{N}_{n}(m)\}.
\end{align*}

If $F=1$, also output $\omega_{n:n+m}$.\newline
\noindent{\bf End of Procedure Aux}\\

We first notice that when $\left\vert L_{i,j}^{n+m}(k^{\prime})-L_{i,j}^{n+m}(k)\right\vert >(k-k^{\prime})^{\beta}\Delta_{n+m}^{2\alpha}$,
$$g(m)\left(d^{\prime}(d^{\prime}-1)\right)^{-1}q_{n}(k,k^{\prime}|m)\exp\left(
\theta_{0}\{L_{i,j}^{n+m}(k^{\prime})-L_{i,j}^{n+m}(k)\}-%
\psi_{n}(m,i,j,k,k^{\prime})\right)>1.$$
Thus $\Xi_{n}(m,i,j,k,k^{\prime},\omega_{n:n+m})<1$. That is to say the likelihood ration function is bounded and the Bernoulli
random variable $F$ is well defined.

We claim that the output $F$ is distributed as a Bernoulli random variable
with parameter $P_{n}\left( \tau_{1}(n)<\infty\right) $. Moreover, we claim
that if $F=1$, then, $\mathcal{\omega}_{n:n+M}$ is distributed according to $%
P_{n}\left( \mathcal{\omega}_{n:\tau_{1}(n)}\in\cdot\text{\ }|\text{ }%
\tau_{1}(n)<\infty\right) $. We first verify the claim that the outcome in
Step 5 follows a Bernoulli with parameter $P_{n}\left(
\tau_{1}(n)<\infty\right) $. In order to see this, let $Q_{n}$ denote the
distribution induced by Procedure Aux. Note that%
\begin{align*}
& Q_{n}(U<I\left( \left\{ \left\vert
L_{i,j}^{n+M}(K^{\prime})-L_{i,j}^{n+M}(K)\right\vert
>(K-K^{\prime})^{\beta}\Delta_{n+M}^{2\alpha }\right\} \cap\cap_{l=1}^{M-1}%
\mathcal{\bar{C}}_{n}(l)\right) \\
& \times\Xi_{n}(M,I,J,K,K^{\prime},\omega_{n:n+M})/\mathcal{N}_{n}\left(
m\right) ) \\
= & E^{Q_{n}}[I\left( \left\{ \left\vert
L_{i,j}^{n+M}(K^{\prime})-L_{i,j}^{n+M}(K)\right\vert
>(K-K^{\prime})^{\beta}\Delta_{n+M}^{2\alpha }\right\} \cap\cap_{l=1}^{M-1}%
\mathcal{\bar{C}}_{n}(l)\right) \\
& \times\Xi_{n}(M,I,J,K,K^{\prime},\omega_{n:n+M})/\mathcal{N}_{n}\left(
m\right) ] \\
= & \sum_{m=1}^{\infty}\sum_{1\leq i,j\leq d^{\prime}}\sum_{1\leq
k<k^{\prime}\leq2^{n+m-1}}E^{Q_{n}}[I\left( \left\{ \left\vert
L_{i,j}^{n+m}(k^{\prime})-L_{i,j}^{n+m}(k)\right\vert
>(k-k^{\prime})^{\beta}\Delta_{n+m}^{2\alpha}\right\} \cap\cap_{l=1}^{m-1}%
\mathcal{\bar{C}}_{n}(l)\right) \\
& \times\frac{dP_{n}}{dP_{n,m}^{i,j,k,k^{\prime}}}(\omega_{n:n+m})\times
\frac{1}{\mathcal{N}_{n}\left( m\right) }] \\
= & \sum_{m=1}^{\infty}\sum_{1\leq i,j\leq d^{\prime}}\sum_{1\leq
k<k^{\prime}\leq2^{n+m-1}}E_{n}\left( \frac{I\left( \left\{ \left\vert
L_{i,j}^{n+m}(k^{\prime})-L_{i,j}^{n+m}(k)\right\vert >(k-k^{\prime})^{\beta
}\Delta_{n+m}^{2\alpha}\right\} \cap\cap_{l=1}^{m-1}\mathcal{\bar{C}}%
_{n}(l)\right) }{\mathcal{N}_{n}\left( m\right) }\right) \\
= & \sum_{m=1}^{\infty}P_{n}\left( \mathcal{C}_{n}(m)\cap\cap_{l=1}^{m-1}%
\mathcal{\bar{C}}_{n}(l)\right)\\
=& P_{n}(\tau_{1}\left( n\right) <\infty).
\end{align*}
Similarly, for the second claim,
\begin{align*}
& Q_{n}\left( \mathcal{\omega}_{n:n+M}\in A\text{ }|U<I\left( \mathcal{C}%
_{n}(M)\cap\cap_{l=1}^{M-1}\mathcal{\bar{C}}_{n}(l)\right) \Xi
_{n}(M,I,J,K,K^{\prime},\omega_{n:n+M})\right) \\
= & \sum_{m=1}^{\infty}E^{Q_{n}}\left( \mathcal{\omega}_{n:n+m}\in A\text{ },%
\text{ }\frac{dP_{n,m}^{I,J,K,K^{\prime}}}{dP_{n}}\left( \mathcal{\omega }%
_{n:n+m}\right) I\left( \mathcal{C}_{n}(m)\cap\cap_{l=1}^{M-1}\mathcal{\bar{C%
}}_{n}(l)\right) \right) /P_{n}(\tau_{1}\left( n\right) <\infty) \\
= & \sum_{m=1}^{\infty}P_{n}\left( \mathcal{\omega}_{n:n+m}\in A\text{ },%
\text{ }\tau_{1}\left( n\right) =m\right) /P_{n}(\tau_{1}\left( n\right)
<\infty) \\
= & P_{n}\left( \mathcal{\omega}_{n:n+\tau_{1}(n)}\in A\text{\ }|\text{ }%
\tau_{1}(n)<\infty\right)
\end{align*}

The deficiency of Procedure Aux is that it does not recognize that $n>N_{1}$%
. Let us now account for this fact and note that conditional on $\mathcal{F}%
_{N_{1}}$ we have that $W_{i,k}^{n}$'s are i.i.d. $N(0,1)$ but conditional
on $\{|W_{i,k}^{n}|\leq4\sqrt{n+1}\}$ for all $n>N_{1}$. Define
\begin{equation*}
\mathcal{H}_{m}^{n}=\{|W_{i,k}^{h}|\leq4\sqrt{h+1}:0\leq k\leq2^{h}-1,n<h\leq
n+m\}.
\end{equation*}
In order to simulate $P_{N_{1}}\left( \tau_{1}(N_{1})<\infty\right) $ we
modify step 3 of Procedure Aux. Specifically, we have\newline

\noindent\textbf{Procedure B}

\textbf{Input: }We assume that we have simulated $\{(W_{i,k}^{l}:0\leq
k<2^{l}):l\leq n\}$. So, the $W_{i,k}^{m}$'s are i.i.d. $N(0,1)$ but
conditional on $\{|W_{i,k}^{m}|$ $<4\sqrt{m+1}\}$ for all $m>n$. We also
assume that conditions (\ref{Cond_on_n}) and (\ref{Cond_on_n_2}) hold in
Lemma \ref{Lemma_MGF_Clean}; note the discussion following Lemma \ref%
{Lemma_MGF_Clean} which notes that this can be assumed at the expense of
simulating additional $W_{i,k}^{m}$'s (with $\{|W_{i,k}^{m}|$ $<4\sqrt{m+1}%
\} $ if $m>N_{1}$).

\textbf{Output: }A Bernoulli $F$ with parameter $P_{n}(\tau_{1}(n)<\infty ,%
\mathcal{H}_{\infty}^{n})$, and if $F=1$, also
\begin{equation*}
\mathcal{\omega}_{n:n+\tau_{1}(n)}=\{W_{i,k}^{l}:1\leq k\leq2^{n},1\leq
i\leq d^{\prime},n<l\leq n+\tau_{1}\left( n\right) \}
\end{equation*}
conditional on $\tau_{1}(n)<\infty$ and on $\mathcal{H}_{\infty}^{n}$.

\textbf{Step 1}: Sample $M$ according to $g\left( m\right) $.

\textbf{Step 2}: Given $M=m$ sample $I$ and $J$ ($I\neq J$) uniformly
over the set $\{1,2,...,d^{\prime}\}$.Then, sample $%
K^{\prime},K $ from $q_{n}\left( k,k^{\prime}|m\right) $.

\textbf{Step 3}: Given $M=m$, $I=i$,$J=j$,$K=k$, and $K^{\prime}=k^{\prime}$%
, simulate $\omega_{n:n+m}$ from $P_{n,m}^{i,j,k,k^{\prime}}\left(
\cdot\right) $. Note that simulation from $P_{n,m}^{i,j,k,k^{\prime}}\left(
\cdot\right) $ can be done according to Corollary \ref{Cor_E_nmL}.
Check if $\mathcal{H}_m^n$ occurs. If yes, continue to Step 4; otherwise, go back to Step 1.

\textbf{Step 4}: Compute
\begin{align*}
&\Xi_{n}(m,i,j,k,k^{\prime},\omega_{n:n+m}) \\
=&\frac{1}{g(m)\left(d^{\prime}(d^{\prime}-1)\right)^{-1}q_{n}(k,k^{\prime}|m)\exp\left(
\theta_{0}\{L_{i,j}^{n+m}(k^{\prime})-L_{i,j}^{n+m}(k)\}-%
\psi_{n}(m,i,j,k,k^{\prime})\right) },
\end{align*}
and
\begin{equation*}
\mathcal{N}_{n}\left( m\right) =\sum_{1\leq i,j\leq d^{\prime},i\neq j}\sum_{1\leq
k<k^{\prime}\leq2^{n+m-1}}I\left( \left\vert L_{i,j}^{n+m}(k^{\prime
})-L_{i,j}^{n+m}(k)\right\vert >(k-k^{\prime})^{\beta}\Delta_{n+m}^{2\alpha
}\right) .
\end{equation*}

\textbf{Step 5}: Simulate $U$ uniformly distributed on $[0,1]$ independent
of everything else and output
\begin{align*}
F=I\{U<&\frac{I\left( \mathcal{H}_{m}^{n}\cap\left\{ \left\vert
L_{i,j}^{n+m}(k^{\prime})-L_{i,j}^{n+m}(k)\right\vert >(k-k^{\prime})^{\beta
}\Delta_{n+m}^{2\alpha}\right\} \cap\cap_{l=1}^{M-1}\mathcal{\bar{C}}%
_{n}(l)\right) P\left( \mathcal{H}_{\infty}^{n+m}\right) }{P\left( \mathcal{H%
}_{\infty}^{n}\right) } \\
& \times\Xi_{n}(m,i,j,k,k^{\prime},\omega_{n:n+m})/\mathcal{N}_{n}(m)\}
\end{align*}
(Notice that $P(\mathcal{H}_{\infty}^{n+m} )/P(\mathcal{H}_{\infty}^{n})=P(%
\mathcal{H}_{n+m}^{n})$ and can be computed in finite steps.)\newline
If $F=1$, also output $\omega_{n:n+m}.$\newline
\noindent{\bf End of Procedure B}\\

Let $\tilde{Q}_{n}$ denote the distribution induced by Procedure $B$%
. Following the same analysis as that given for Procedure Aux, we can verify
that
\begin{align*}
\tilde{Q}_{n}(U< & \frac{I\left( \mathcal{H}_{m}^{n}\cap\left\{ \left\vert
L_{i,j}^{n+m}(k^{\prime})-L_{i,j}^{n+m}(k)\right\vert >(k-k^{\prime})^{\beta
}\Delta_{n+m}^{2\alpha}\right\} \cap\cap_{l=1}^{M-1}\mathcal{\bar{C}}%
_{n}(l)\right) P\left( \mathcal{H}_{\infty}^{n+m}\right) }{P\left( \mathcal{H%
}_{\infty}^{n}\right) } \\
& \times\Xi_{n}(m,i,j,k,k^{\prime},\omega_{n:n+m})/\mathcal{N}%
_{n}(m)))=P_{n}\left( \tau_{1}(n)<\infty|\mathcal{H}_{\infty}^{n}\right) .
\end{align*}
And if the Bernoulli trial is a success, then, $\mathcal{\omega}_{n:n+M}$ is
distributed according to
\begin{equation*}
P_{n}\left( \mathcal{\omega}_{n:n+\tau_{1}(n)}\in\cdot\text{\ }|\text{ }%
\tau_{1}(n)<\infty,\mathcal{H}_{\infty}^{n}\right) .
\end{equation*}

Finally, if $\tau_{1}\left( n\right) =\infty$, we may still need to simulate $%
\mathcal{\omega}_{n:n+m}$ for any $m\geq1$, but now, conditional on $%
\{\tau_{1}(n)=\infty,\mathcal{H}_{\infty}^{n}\}$. Note that
\begin{align*}
&P_{n}\left( \mathcal{\omega}_{n:n+m}\in A\text{\ }|\text{ }%
\tau_{1}(n)=\infty,\mathcal{H}_{\infty}^{n}\right) \\
=& \frac{P_{n}\left( \mathcal{\omega}_{n:n+m}\in A\text{\ },\text{ }%
\tau_{1}(n)=\infty ,\mathcal{H}_{\infty}^{n}\right) }{P_{n}\left(
\tau_{1}(n)=\infty ,\mathcal{H}_{\infty}^{n}\right) } \\
=& \frac{E_{n}I(\mathcal{\omega}_{n:n+m}\in A\text{,}\tau_{1}(n)>m,\mathcal{H%
}_{m}^{n})P_{n+m}(\tau_{1}(n+m)=\infty,\mathcal{H}_{\infty }^{n+m})}{%
P_{n}\left( \tau_{1}(n)=\infty,\mathcal{H}_{\infty}^{n}\right) }.
\end{align*}
Thus we can sample $\mathcal{\omega}_{n:n+m}$ from $P_{n}\left(
\cdot\right) $ and accept the path with probability
\begin{equation*}
I(\tau_{1}(n)>m,\mathcal{H}_{m}^{n})P_{n+m}(\tau_{1}(n+m)=\infty ,\mathcal{H}%
_{\infty}^{n+m}).
\end{equation*}
This clearly can be done since we can easily simulate Bernoulli's with
probability
\begin{equation*}
P_{n+m}(\tau(n+m)=\infty,\mathcal{H}_{\infty}^{n+m})=P_{n+m}(\tau
_{1}(n+m)=\infty\text{ }|\text{ }\mathcal{H}_{\infty}^{n+m})P_{n+m}(\mathcal{%
H}_{\infty}^{n+m}).
\end{equation*}

We summarize the algorithm as follows:\newline

\noindent\textbf{Algorithm II: Simulate $N_{1}$ and $N_{2}$ jointly with $%
W_{i,k}^{n}$'s for $1\leq n\leq N_{0}$, where $N_{0}$ is chosen such that $%
\sup_{t\in\lbrack0,1]}||\hat{X}^{N_{0}}(t)-X(t)||_{\infty}\leq\varepsilon$ }

\textbf{Input:} The parameters required to run Algorithm I, and Procedures A
and B. These are the tilting parameters $\theta_{0}$'s.

\textbf{Step 1}: Simulate $N_{1}$ jointly with $W_{i,k}^{m}$'s for $0\leq
m\leq N_{1}$ using Algorithm I (see the remark that follows after Algorithm
I). Let $n=N_{1}$.

\textbf{Step 2}: If any of the conditions (\ref{Cond_on_n}) and (\ref%
{Cond_on_n_2}) from Lemma \ref{Lemma_MGF_Clean} are not satisfied keep
simulating $W_{i,k}^{m}$'s for $m>n$ until the first level $m>n$ for which
conditions (\ref{Cond_on_n}) and (\ref{Cond_on_n_2}) are satisfied. Redefine
$n$ to be such first level $m$.

\textbf{Step 3}: Run Procedure B and obtain as output $F$ and if $F=1$ also
obtain $\omega_{n:n+\tau(n)}$.

\textbf{Step 4}: If $\tau(n)<\infty$ (i.e. $F=1$) set $n\longleftarrow%
\tau(n) $ and go back to Step 2. Otherwise, go to Step 4.

\textbf{Step 5}: Calculate $G$ according to Procedure \textit{A} and solve
for $N_{0}$ such that $G\Delta_{N_{0}}^{2\alpha-\beta}<\varepsilon$.

\textbf{Step 6}: If $N_{0}>n$ sample $\omega_{n:N_{0}}$ from $P_{n}(\cdot)$
and sample a Bernoulli random variable, $I$ with probability of success $%
P_{N_{0}}(\tau(N_{0})=\infty,\mathcal{H}_{\infty}^{N_{0}})$.

\textbf{Step 7}: If $I=0$, go back to Step 6.

\textbf{Step 8}: Output $\omega_{0:N_{0}}$.\newline
\noindent{\bf End of Algorithm II}\\

We obtain $\{W_{i,k}^{l}:0\leq k<2^{l},l\leq N_{0},1\leq i\leq d\}$ from
Algorithm II. We have from recursions in Lemma \ref{lm:lambda} how to obtain%
\begin{equation}
\{(Z_{i}(t_{r}^{l})-Z_{i}(t_{r-1}^{l})):1\leq r\leq2^{l},1\leq l\leq
N_{0},1\leq i\leq d\}  \label{incrr_algo_II}
\end{equation}
and then we can compute $\{\hat{X}^{N_{0}}(t):t\in D_{N_{0}}\}$ using
equation (\ref{eq:recursion1}).\newline

\textbf{Remark: }Observe that after completion of Algorithm II, one can
actually continue the simulation of increments in order to obtain an
approximation with an error $\varepsilon^{\prime}<\varepsilon$. In
particular, this is done by repeating Steps 4 to 8. Start from Step 4 with $%
n=N_{0}$. The value of $G$ has been computed, it does not depend on $%
\varepsilon$. However, one needs to recompute $N_{0}:=N_{0}\left(
\varepsilon^{\prime}\right) $ such that $G\Delta_{N_{0}}^{2\alpha-\beta}<%
\varepsilon^{\prime}$. Then we can implement Steps 5 to 8 without change.
One obtains an output that, as before, can be transformed into (\ref%
{incrr_algo_II}) via the recursions (\ref{Eq:INC_1}), yielding $\{\hat{X}%
^{N_{0}\left( \varepsilon^{\prime }\right) }(t):t\in D_{N_{0}\left(
\varepsilon^{\prime}\right) }\}$ with a guaranteed error smaller than $%
\varepsilon^{\prime}$ in uniform norm with probability 1.

\section{Rough Differential Equations, Error Analysis, and The Proof of
Theorem \protect\ref{th:main}\label{Sect_RDE}}

The analysis in this section follows closely the discussion from \cite{Davie_2007} Section 3 and Section 7; see also \cite{FV_2010} Chapter 10. We
made some modifications to account for the drift of the process and also to
be able to explicitly calculate the constant $G$. Let us start with the
definition of a solution to \eqref{eq:main} using the theory of rough
differential equations. We first provide a definition of the solution of \eqref{eq:main} in a pathwise sense, following \cite{Davie_2007}.

\begin{definition}
\label{Def_RDE}$X(\cdot)$ is a solution of \eqref{eq:main} on $[0,1]$ if $%
X(0)=x(0)$ and for almost every sample path $\{ Z_{j}(\cdot):j=1,2,\dots, d \}$ it holds
\begin{align*}
& |X_{i}(t)-X_{i}(s)-\mu_{i}(X(s))(t-s)-\sum_{j=1}^{d^{\prime}}\sigma
_{i,j}(X(s))(Z_{j}(t)-Z_{j}(s)) \\
& -\sum_{j=1}^{d^{\prime}}\sum_{l=1}^{d}\sum_{m=1}^{d^{\prime}}\partial
_{l}\sigma_{i,j}(X(s))\sigma_{l,m}(X(s))A_{m,j}(s,t)|=o(t-s)
\end{align*}
for $i=1,2,\dots, d$ and $0\leq s<t\leq1$, where $A_{i,j}\left( \cdot\right) $
satisfies
\begin{equation}
A_{i,j}(r,t)=A_{i,j}(r,s)+A_{i,j}(s,t)+(Z_{i}(s)-Z_{i}(r))(Z_{j}(t)-Z_{j}(s))
\label{Eq:RDE_Area}
\end{equation}
for $0\leq r<s<t\leq1$.
\end{definition}

The previous definition is motivated by the following Taylor-type
development,

\begin{align*}
& X_{i}(t+h) \\
= &
X_{i}(t)+\int_{t}^{t+h}\mu_{i}(X(u))du+\sum_{j=1}^{d^{\prime}}\int_{t}^{t+h}%
\sigma_{i,j}(X(u))dZ_{j}(u) \\
\approx & X_{i}(t)+\int_{t}^{t+h}\mu_{i}(X(u))du \\
& +\sum_{j=1}^{d^{\prime}}\int_{t}^{t+h}\sigma_{i,j}\left( X(t)+\mu
(X(t))(u-t)+\sigma(X(t))(Z(u)-Z(t))\right) dZ_{j}(u) \\
\approx & X_{i}(t)+\mu_{i}(X(t))h+\sum_{j=1}^{d^{\prime}}\sigma
_{i,j}(X(t))(Z_{j}(t+h)-Z_{j}(t)) \\
& +\sum_{j=1}^{d^{\prime}}\sum_{l=1}^{d}\sum_{m=1}^{d^{\prime}}\partial
_{l}\sigma_{i,j}(X(t))\sigma_{l,m}(X(t))%
\int_{t}^{t+h}(Z_{m}(u)-Z_{m}(t))dZ_{j}(u).
\end{align*}
The previous Taylor development suggests defining $A_{i,j}(s,t):=%
\int_{s}^{t}(Z_{i}(u)-Z_{i}(s))dZ_{j}(u)$. Depending on how one interprets $%
A(s,t)$, e.g. via It\^o or Stratonovich integrals, one obtains a solution $%
X(\cdot)$ which is interpreted in the corresponding context.

In order to obtain the It\^o interpretation of the solution to equation (\ref%
{eq:main}) via definition (\ref{Def_RDE}) we shall interpret the integrals
in the sense of It\^o. In addition, as we shall explain, some technical
conditions (in addition to the standard Lipschitz continuity typically
required to obtain a strong solution) must be imposed in order to
enforce the existence of a unique solution to (\ref{Def_RDE}).

There are two sources of errors when using $\hat{X}^{n}$ in equation (\ref%
{eq:recursion1}) to approximate $X$. One is the discretization on the dyadic
grid, but assuming that $A_{i,j}\left( t_{k}^{n},t_{k+1}^{n}\right) $ is
known; this type of analysis is the one that is most common in the
literature on rough paths (see \cite{Davie_2007}). The second source of
error arises due to the fact that $A_{i,j}\left(
t_{k}^{n},t_{k+1}^{n}\right) $ is not known for $i\neq j$. Thus we divide the proof of
Theorem \ref{th:main} into two steps (two propositions), each dealing with
one source of error.

Similar to $\hat{X}^{n}(t)$, we define $\{X^{n}(t):t\in D_{n}\}$ by the
following recursion: given $X^{n}(0)=X(0)$,
\begin{align}
X_{i}^{n}(t_{k+1}^{n})= &
X_{i}^{n}(t_{k}^{n})+\mu_{i}(X^{n}(t_{k}^{n}))\Delta_{n}+\sum_{j=1}^{d^{%
\prime}}%
\sigma_{i,j}(X^{n}(t_{k}^{n}))(Z_{j}(t_{k+1}^{n})-Z_{j}(t_{k}^{n}))
\notag  \label{eq:recursion2} \\
& +\sum_{j=1}^{d^{\prime}}\sum_{l=1}^{d}\sum_{m=1}^{d^{\prime}}\partial
_{l}\sigma_{i,j}(X^{n}(t_{k}^{n}))%
\sigma_{l,m}(X^{n}(t_{k}^{n}))A_{m,j}(t_{k}^{n},t_{k+1}^{n}),
\end{align}
and for $t\in\lbrack0,1]$, we let $X^{n}(t)=X^{n}(\lfloor t\rfloor)$, where
in this context $\lfloor t\rfloor=\max\{s\in D_{n}:s\leq t\}$.

\begin{proposition}
\label{prop:error1} Under the conditions of Theorem \ref{th:main}, we can
compute a constant $G_{1}$ explicitly in terms of $M$, $||Z||_{\alpha}$ and $%
||A||_{2\alpha}$, such that for $n$ large enough
\begin{equation*}
||X^{n}(t)-X(t)||_{\infty}\leq G_{1} \Delta_{n}^{3\alpha-1}.
\end{equation*}
\end{proposition}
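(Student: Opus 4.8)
The plan is to view $X^{n}$ as an iterated \emph{Euler map} and to compare it with the flow generated by the rough differential equation of Definition~\ref{Def_RDE}. For $0\le s<t\le1$ and $y\in R^{d}$ set
\begin{equation*}
\Phi^{s,t}(y)_{i}:=y_{i}+\mu_{i}(y)(t-s)+\sum_{j}\sigma_{i,j}(y)\bigl(Z_{j}(t)-Z_{j}(s)\bigr)+\sum_{j,l,m}\partial_{l}\sigma_{i,j}(y)\,\sigma_{l,m}(y)\,A_{m,j}(s,t),
\end{equation*}
so that the recursion defining $X^{n}$ reads $X^{n}(t_{k+1}^{n})=\Phi^{t_{k}^{n},t_{k+1}^{n}}(X^{n}(t_{k}^{n}))$, while Definition~\ref{Def_RDE} says exactly that the solution satisfies $X(t)=\Phi^{s,t}(X(s))+o(t-s)$. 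The hypotheses $\|\mu\|_{\infty},\|\mu'\|_{\infty},\|\sigma^{(i)}\|_{\infty}\le M$ together with $\|Z\|_{\alpha}\le K_{\alpha}$, $\|A\|_{2\alpha}\le K_{2\alpha}$ yield at once an explicit $C_{0}=C_{0}(M,K_{\alpha},K_{2\alpha})$ with $\|\Phi^{s,t}(y)-y\|_{\infty}\le C_{0}|t-s|^{\alpha}$ and an explicit one-step Lipschitz bound $\|\Phi^{s,t}(y)-\Phi^{s,t}(y')\|_{\infty}\le(1+C_{0}|t-s|^{\alpha})\|y-y'\|_{\infty}$.

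The one genuinely algebraic step is an \emph{approximate flow} estimate: there is an explicit $C_{1}=C_{1}(M,K_{\alpha},K_{2\alpha})$ such that for all $0\le r<s<t\le1$ and all $y$,
\begin{equation*}
\bigl\|\Phi^{s,t}\bigl(\Phi^{r,s}(y)\bigr)-\Phi^{r,t}(y)\bigr\|_{\infty}\le C_{1}\,|t-r|^{3\alpha}.
\end{equation*}
One Taylor-expands $\mu$ and $\sigma$ to second order around $y$ inside $\Phi^{s,t}(\Phi^{r,s}(y))$. The zeroth-order terms reassemble $\mu_{i}(y)(t-r)$ and $\sum_{j}\sigma_{i,j}(y)(Z_{j}(t)-Z_{j}(r))$ by additivity of the increments, while the first-order term $\sum_{j,l,m}\partial_{l}\sigma_{i,j}(y)\sigma_{l,m}(y)(Z_{m}(s)-Z_{m}(r))(Z_{j}(t)-Z_{j}(s))$ combines with the area parts of $\Phi^{r,s}$ and $\Phi^{s,t}$ precisely through the Chen relation \eqref{Eq:RDE_Area}, $A_{m,j}(r,t)=A_{m,j}(r,s)+A_{m,j}(s,t)+(Z_{m}(s)-Z_{m}(r))(Z_{j}(t)-Z_{j}(s))$, to produce the area part of $\Phi^{r,t}(y)$. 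Everything left over — the second-order Taylor remainder of $\sigma$, the $\mu(y)(s-r)$ and area contributions hidden in the first-order term, and the difference between $\Phi^{s,t}$ applied at $\Phi^{r,s}(y)$ versus at $y$ — is a finite sum of terms of orders $|t-r|^{1+\alpha}$, $|t-r|^{3\alpha}$, $|t-r|^{1+2\alpha}$, and since $\tfrac13<\alpha<\tfrac12$, hence $1<3\alpha<1+\alpha$, each is bounded by a constant times $|t-r|^{3\alpha}$.

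Next a dyadic sewing argument upgrades Definition~\ref{Def_RDE} to a quantitative statement and supplies the flow. Comparing the $2^{j}$-step and $2^{j+1}$-step dyadic Euler approximations over an interval $[s,t]$, one refinement changes the output by at most $C_{1}(|t-s|2^{-j})^{3\alpha}$ on each of the $2^{j}$ sub-intervals; working on intervals short enough (length $\le\delta$, with $\delta$ depending only on $M,K_{\alpha},K_{2\alpha}$) that the composed Euler maps have Lipschitz constant $\le2$, the total change over one refinement is $\le2C_{1}|t-s|^{3\alpha}2^{j(1-3\alpha)}$; summing the geometric series in $j$ (convergent because $3\alpha>1$, producing the factor $\tfrac{2}{1-2^{1-3\alpha}}$ visible in Procedure~A) shows the dyadic Euler maps converge to a flow $U_{s\to t}$ with $U_{s\to t}\circ U_{r\to s}=U_{r\to t}$, $X(t)=U_{0\to t}(x(0))$, and
\begin{equation*}
\bigl\|U_{s\to t}(y)-\Phi^{s,t}(y)\bigr\|_{\infty}\le C_{3}\,|t-s|^{3\alpha},\qquad 0\le s<t\le1,\ y\in R^{d},
\end{equation*}
with $C_{3}$ explicit after concatenating the $O(1/\delta)$ length-$\delta$ pieces (this is the $C_{3}$ of Procedure~A). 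Running the same scheme on the linearized (variation) equation gives an explicit, $n$-independent bound $\|U_{s\to t}(y)-U_{s\to t}(y')\|_{\infty}\le B\,\|y-y'\|_{\infty}$ for all $0\le s<t\le1$, namely the $B=\tfrac{2}{\delta'}B_{1}(\delta')$ of Procedure~A. All of this needs $\Delta_{n}\le\delta\wedge\delta'$, which is the meaning of ``$n$ large enough'' in the statement.

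Finally I telescope along $D_{n}$: for any $t_{m}^{n}\in D_{n}$,
\begin{equation*}
X^{n}(t_{m}^{n})-X(t_{m}^{n})=\sum_{k=0}^{m-1}\Bigl[U_{t_{k+1}^{n}\to t_{m}^{n}}\bigl(\Phi^{t_{k}^{n},t_{k+1}^{n}}(X^{n}(t_{k}^{n}))\bigr)-U_{t_{k+1}^{n}\to t_{m}^{n}}\bigl(U_{t_{k}^{n}\to t_{k+1}^{n}}(X^{n}(t_{k}^{n}))\bigr)\Bigr],
\end{equation*}
which collapses since $\Phi^{t_{k}^{n},t_{k+1}^{n}}(X^{n}(t_{k}^{n}))=X^{n}(t_{k+1}^{n})$ and the $U$'s compose. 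By the flow Lipschitz bound $B$ and the local error bound $C_{3}$, the $k$-th summand is at most $B\,C_{3}\,\Delta_{n}^{3\alpha}$ (and $C_{3}\Delta_{n}^{3\alpha}$ when $k=m-1$), so, using $m\le2^{n}=\Delta_{n}^{-1}$, $\|X^{n}(t_{m}^{n})-X(t_{m}^{n})\|_{\infty}\le m\,B\,C_{3}\Delta_{n}^{3\alpha}+C_{3}\Delta_{n}^{3\alpha}\le(1+B)C_{3}\,\Delta_{n}^{3\alpha-1}$, which is the bound $G_{1}\Delta_{n}^{3\alpha-1}$ with $G_{1}=(1+B)C_{3}$ recorded in Step~5 of Procedure~A; passing from grid points to arbitrary $t\in[0,1]$ costs only an extra $\|X\|_{\alpha}\Delta_{n}^{\alpha}$, which is $\le C\,\Delta_{n}^{3\alpha-1}$ since $\alpha\ge3\alpha-1$, and is absorbed into $G_{1}$. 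The delicate point — and the reason one needs rough-path theory rather than a textbook Euler estimate — is the uniform, explicit control of $C_{3}$ and $B$: a naive step-by-step Gronwall fails because the one-step Lipschitz constant is $1+O(\Delta_{n}^{\alpha})$ and $2^{n}\Delta_{n}^{\alpha}\to\infty$, so one must bound the flow's Lipschitz constant through the variation equation, or equivalently run all the estimates on intervals of a fixed, $n$-independent length on which the composed maps are genuine contractions and then concatenate $O(1/\delta)$ of them — exactly the ``$C_{i}=\tfrac{2}{\delta}C_{i}(\delta)$'', ``$B=\tfrac{2}{\delta'}B_{1}(\delta')$'' bookkeeping of Procedure~A.
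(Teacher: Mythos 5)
Your argument is, in substance, the paper's own: a local consistency estimate of order $|t-r|^{3\alpha}$ for the one-step map $\Phi^{s,t}$ (this is exactly the content of the bound on $J^{n}(r,t)$ in Lemma \ref{lm:tech1}, obtained there by the Chen relation \eqref{Eq:RDE_Area} and the same Taylor bookkeeping you describe), a uniform Lipschitz-in-initial-condition bound $1+B$ on intervals of fixed length (Lemma \ref{lm:tech2}), and a Lady Windermere telescoping that yields $(1+B)C_{3}\Delta_{n}^{3\alpha-1}=G_{1}\Delta_{n}^{3\alpha-1}$, matching Step 5 of Procedure A. The only structural difference is that your telescoping is the dual one: you push the one-step defect forward with the \emph{exact} flow $U_{s\to t}$, whereas the paper pushes the defect $\Vert X(t_{k}^{n})-X^{n,(t_{k-1}^{n})}(t_{k}^{n};X(t_{k-1}^{n}))\Vert\le C_{3}\Delta_{n}^{3\alpha}$ forward with the \emph{numerical} flow $X^{n,(\cdot)}$ and applies Lemma \ref{lm:tech2} to it; both give the same bound, and the paper also constructs $X$ as an Arzela--Ascoli limit of the $X^{n}$ rather than by sewing the $\Phi^{s,t}$.

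One place where your write-up needs rearranging to be rigorous: in the dyadic sewing step you invoke "intervals short enough that the composed Euler maps have Lipschitz constant $\le 2$" in order to propagate the per-node perturbation $C_{1}(|t-s|2^{-j})^{3\alpha}$ downstream, but the one-step Lipschitz constant is only $1+O((|t-s|2^{-j})^{\alpha})$, and composing $2^{j}$ of these gives $\exp(O(\delta^{\alpha}2^{j(1-\alpha)}))$, which is not bounded as $j\to\infty$; the uniform Lipschitz bound on composed maps is precisely the nontrivial conclusion of Lemma \ref{lm:tech2}, which in turn rests on the sewing-type estimate. As stated, your order of deductions is therefore circular. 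The standard repair — and the one the paper uses — is to run the induction directly on the defect $J^{n}(r,t)$ of a \emph{single} trajectory over dyadic pairs (splitting at the near-midpoint $s$ and absorbing the cross terms), which requires no a priori Lipschitz control of composed maps; the Lipschitz bound $B$ is then derived afterwards by applying the same induction to the normalized difference process $Y^{n}$, as in \eqref{eq:recursion5}. With that reordering your proof is complete; the final passage from grid points to arbitrary $t$ (costing $O(\Delta_{n}^{\alpha})\le O(\Delta_{n}^{3\alpha-1})$) is a detail the paper's proof leaves implicit and you handle correctly.
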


The proof of Proposition \ref{prop:error1} will be given after introducing
some definitions and key auxiliary results. We denote
\begin{equation*}
I_{i}^{n}(r,t):=X_{i}^{n}(t)-X_{i}^{n}(r)-\mu_{i}(X^{n}(r))(t-r)-\sum
_{j=1}^{d^{\prime}}\sigma_{i,j}(X^{n}(r))(Z_{j}(t)-Z_{j}(r))
\end{equation*}
and
\begin{equation*}
J_{i}^{n}(r,t):=I_{i}^{n}(r,t)-\sum_{j=1}^{d^{\prime}}\sum_{l=1}^{d}\sum
_{m=1}^{d^{\prime}}\partial_{l}\sigma_{i,j}(X^{n}(r))%
\sigma_{l,m}(X^{n}(r))A_{m,j}(r,t).
\end{equation*}

The following lemmas introduce the main technical results for the proof of
Proposition \ref{prop:error1}.

\begin{lemma} 
\label{lm:tech1} Under the conditions of Theorem \ref{th:main}, there exist
constants $C_{1}$, $C_{2}$ and $C_{3}$ that depend only on $M$, $%
||Z||_{\alpha }$ and $||A||_{2\alpha}$, such that for any large enough $n$
and $r,t\in D_{n}$,
\begin{align*}
||X^{n}(t)-X^{n}(r)||_{\infty} & \leq C_{1}|t-r|^{\alpha}, \\
|I^{n}(r,t)||_{\infty} & \leq C_{2}|t-r|^{2\alpha},
\end{align*}
and
\begin{equation*}
||J^{n}(r,t)||_{\infty}\leq C_{3}|t-r|^{3\alpha}.
\end{equation*}
\end{lemma}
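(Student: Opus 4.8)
The plan is to run a Davie-type bootstrap on the H\"older seminorms of $X^n$, $I^n$ and $J^n$ over pairs of dyadic points, using crucially that $3\alpha>1$ when $\alpha>1/3$. The starting point is the exact one-step identity coming from the definition (\ref{eq:recursion2}) of $X^n$: on every grid step $[t_k^n,t_{k+1}^n]$ we have
\[
I_i^n(t_k^n,t_{k+1}^n)=\sum_{j=1}^{d^{\prime}}\sum_{l=1}^{d}\sum_{m=1}^{d^{\prime}}\partial_l\sigma_{i,j}(X^n(t_k^n))\,\sigma_{l,m}(X^n(t_k^n))\,A_{m,j}(t_k^n,t_{k+1}^n),\qquad J_i^n(t_k^n,t_{k+1}^n)=0 .
\]
Thus $J^n$ vanishes on single steps, and everything reduces to controlling how $X^n$, $I^n$ and $J^n$ behave under concatenation of intervals. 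Following the recipe of Procedure A, I would work with the localized constants
\[
C_1(\delta)=\sup\frac{\|X^n(t)-X^n(r)\|_\infty}{|t-r|^{\alpha}},\quad C_2(\delta)=\sup\frac{\|I^n(r,t)\|_\infty}{|t-r|^{2\alpha}},\quad C_3(\delta)=\sup\frac{\|J^n(r,t)\|_\infty}{|t-r|^{3\alpha}},
\]
the suprema being over $r<t$ in $D_n$ with $t-r\le\delta$; for each fixed $n$ these are finite because $X^n$ takes only finitely many values.

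The core of the argument is a set of ``almost additivity'' estimates for $r<u<t$ in $D_n$. Writing $X^n(t)-X^n(r)=\mu(X^n(r))(t-r)+\sigma(X^n(r))(Z(t)-Z(r))+(\text{area term})+J^n(r,t)$ and using $\|\mu\|_\infty\le M$, $\|\sigma\|_\infty\le M$ together with the definitions of $\|Z\|_\alpha$ and $\|A\|_{2\alpha}$ yields $C_1(\delta)\le C_3(\delta)\delta^{2\alpha}+M\delta^{1-\alpha}+dM\|Z\|_\alpha+d^3M^2\|A\|_{2\alpha}\delta^{\alpha}$, the first relation of Procedure A; similarly $I^n(r,t)=\sum\partial_l\sigma_{i,j}\sigma_{l,m}A_{m,j}(r,t)+J^n(r,t)$ gives $C_2(\delta)\le C_3(\delta)\delta^{\alpha}+d^3M^2\|A\|_{2\alpha}$. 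The real work is the identity for $J^n(r,t)-J^n(r,u)-J^n(u,t)$: one Taylor-expands $\mu$, $\sigma$ to second order and $\partial\sigma$ to first order around $X^n(r)$ (using $\|\sigma^{(i)}\|_\infty\le M$ for $i=0,1,2,3$ and $\|\mu^{\prime}\|_\infty\le M$), substitutes $X^n(u)-X^n(r)=\sigma(X^n(r))(Z(u)-Z(r))+O(|u-r|)+O(|u-r|^{2\alpha})+J^n(r,u)$, and invokes the area relation (\ref{Eq:RDE_Area}) to split $A_{m,j}(r,t)$ as $A_{m,j}(r,u)+A_{m,j}(u,t)+(Z_m(u)-Z_m(r))(Z_j(t)-Z_j(u))$. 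The point is that the cross term produced by (\ref{Eq:RDE_Area}) cancels exactly against the first-order Taylor term of $\sigma$, so that every surviving term is a product of increments of $X^n$, $Z$ and $A$ of total H\"older exponent at least $3\alpha$; collecting constants gives a bound of the form $\|J^n(r,t)-J^n(r,u)-J^n(u,t)\|_\infty\le\big(MC_1(\delta)+dMC_1(\delta)^2\|Z\|_\alpha+d^2MC_2(\delta)\|Z\|_\alpha+2d^3M^2C_1(\delta)\|A\|_{2\alpha}\big)|t-r|^{3\alpha}$. I expect this bookkeeping --- isolating exactly which products of increments appear after the cancellations and pinning down their explicit constants --- to be the main obstacle.

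Given the additivity defect above, the conclusion follows by the standard bisection estimate: for $r<t$ in $D_n$ spanning $m$ grid steps, repeatedly split at the middle grid point; since $J^n$ vanishes on single steps and is additive up to the displayed defect, summing the defects over the roughly $2^{j}$ intervals of length at most $|t-r|2^{-j+1}$ at level $j$ bounds $\|J^n(r,t)\|_\infty$ by $\tfrac{2}{1-2^{1-3\alpha}}$ times the bracketed quantity times $|t-r|^{3\alpha}$, the geometric series converging precisely because $3\alpha>1$. This is the third relation of Procedure A, so for $\delta$ small enough --- which decouples the otherwise circular system, the feedback coefficients $C_3(\delta)\delta^{2\alpha}$ and $C_3(\delta)\delta^{\alpha}$ carrying positive powers of $\delta$ --- the three relations admit a common solution $C_i(\delta)$ depending only on $M$, $\|Z\|_\alpha$, $\|A\|_{2\alpha}$ and uniform in $n$ once $\Delta_n\le\delta$. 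Finally I would pass from the regime $t-r\le\delta$ to all $r,t\in D_n$ by chaining over at most $\lceil 1/\delta\rceil$ consecutive blocks of length $\le\delta$, telescoping and re-optimizing; this produces $C_1=\tfrac{2}{\delta}C_1(\delta)$ and the analogous $C_2,C_3$ of Step~2 of Procedure A, which proves the lemma. The hypothesis ``$n$ large enough'' is used only to guarantee $\Delta_n\le\delta$, so that single grid steps already fall in the local regime.
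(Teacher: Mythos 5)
Your proposal is correct and follows essentially the same route as the paper's proof: the exact one-step identity $J^n(t_k^n,t_{k+1}^n)=0$, the almost-additivity recursion for $J^n$ in which the Chen relation (\ref{Eq:RDE_Area}) cancels the first-order Taylor term of $\sigma$, the dyadic summation using $3\alpha>1$ to close the self-consistent system of Procedure A for small $\delta$, and chaining over $O(1/\delta)$ blocks for the global constants. The paper phrases the local step as an induction on $|t-r|$ with the constants $C_i(\delta)$ fixed a priori to satisfy the strict inequalities (splitting once at the near-midpoint $s$ and at $s+\Delta_n$), rather than defining them as suprema and summing a geometric series over bisection levels, but the two formulations are equivalent and yield the same constants.
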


\begin{proof}
For $r\leq s\leq t$, $r,s,t\in D_{n}$, we have the following
important recursions:
\begin{align*}
I_{i}^{n}(r,t)=&I_{i}^{n}(r,s)+I_{i}^{n}(s,t)+(\mu_{i}(X^{n}(s))-\mu_{i}%
(X^{n}(r)))(t-s)\\
&+\sum_{j=1}^{d^{\prime}}(\sigma_{i,j}(X^{n}(s))-\sigma
_{i,j}(X^{n}(r)))(Z_{j}(t)-Z_{j}(s))
\end{align*}
and
\begin{align}
&  J_{i}^{n}(r,t)\nonumber\label{eq:recur}\\
=  &  J_{i}^{n}(r,s)+J_{i}^{n}(s,t)+\left(  \mu_{i}(X^{n}(s))-\mu_{i}%
(X^{n}(r))\right)  (t-s)\nonumber\\
&  +\sum_{j=1}^{d^{\prime}}[\sigma_{i,j}(X^{n}(s))-\sigma_{i,j}(X^{n}%
(r))-\sum_{l=1}^{d}\partial_{l}\sigma_{i,j}(X^{n}(r))(X_{l}^{n}(s)-X_{l}%
^{n}(r))\nonumber\\
&  +\sum_{l=1}^{d}\partial_{l}\sigma_{i,j}(X^{n}(r))I_{l}^{n}(r,s)+\sum_{l=1}^{d}\partial_{l}\sigma_{i,j}(X^{n}(r))\mu_{i}(X^{n}(r))(s-r)](Z_{j}%
(t)-Z_{j}(s))\nonumber\\
&  +\sum_{j=1}^{d^{\prime}}\sum_{l=1}^{d}\sum_{m=1}^{d^{\prime}}\left[
\partial_{l}\sigma_{i,j}(X^{n}(s))\sigma_{l,m}(X^{n}(s))-\partial_{l}%
\sigma_{i,j}(X^{n}(r))\sigma_{l,m}(X^{n}(r))\right]  A_{m,j}(s,t)
\end{align}

We next divide the proof into two parts. We first prove that there exists a
small enough constant $\delta>0$ and three large enough constants
$C_{1}(\delta)$, $C_{2}(\delta)$ and $C_{3}(\delta)$, all independent of $n$,
such that for $|t-r|<\delta$, $||X^{n}(t)-X^{n}(r)||_{\infty}\leq C_{1}%
(\delta)|t-r|^{\alpha}$, $||I^{n}(r,t)||_{\infty}\leq C_{2}(\delta
)|t-r|^{2\alpha}$ and $||J^{n}(r,t)||_{\infty}\leq C_{3}(\delta)|t-r|^{3\alpha
}$. We prove it by induction. First we have $J^{n}(r,r)=0$ and $J^{n}%
(r,r+\Delta_{n})=0$. Suppose the result hold for all pairs of $r_{0},t_{0}\in
D_{n}$ with $|t_{0}-r_{0}|<|t-r|$. We then pick $s\in D_{n}$ as the largest
point between $r$ and $t$ such that $|s-r|\leq|t-r|/2$. Then we also have
$|s+\Delta_{n}-r|>|t-r|/2$ and $|t-(s+\Delta_{n})|<|t-r|/2$.
For simplicity of notation, we denote $\bar d=\max\{d,d^{\prime}\}$.\\

As
\begin{align*}
X_{i}^{n}(t)-X_{i}^{n}(s)=  &  J_{i}^{n}(s,t)+\mu_{i}(X^{n}(s))(t-s)+\sum
_{j=1}^{d^{\prime}}\sigma_{i,j}(X^{n}(s))(Z_{j}(t)-Z_{j}(s))\\
&  +\sum_{j=1}^{d^{\prime}}\sum_{l=1}^{d}\sum_{m=1}^{d^{\prime}}\partial
_{l}\sigma_{i,j}(X^{n}(s))\sigma_{l,m}(X^{n}(s))A_{m,j}(s,t),
\end{align*}
we have
\begin{align*}
&|X_{i}^{n}(t)-X_{i}^{n}(s)|\\
\leq& C_{3}(\delta)|t-s|^{3\alpha
}+M|t-s|+\bar dM||Z||_{\alpha}|t-s|^{\alpha}+\bar d^{3}M^{2}||A||_{2\alpha
}|t-s|^{2\alpha}\\
\leq& (C_{3}(\delta)\delta^{2\alpha}+M\delta^{1-\alpha}+\bar dM||Z||_{\alpha
}+\bar d^{3}M^{2}||A||_{2\alpha}\delta^{\alpha})|t-s|^{\alpha}\\
\leq& C_{1}(\delta)|t-s|^{\alpha}%
\end{align*}
for $C_{1}(\delta)\geq C_{3}(\delta)\delta^{2\alpha}+M\delta^{1-\alpha
}+\bar dM||Z||_{\alpha}+\bar d^{3}M^{2}||A||_{2\alpha}\delta^{\alpha}$.\newline And as
\[
I_{i}^{n}(s,t)=J_{i}^{n}(s,t)+\sum_{j=1}^{d^{\prime}}\sum_{l=1}^{d}\sum
_{m=1}^{d^{\prime}}\partial_{l}\sigma_{i,j}(X^{n}(s))\sigma_{l,m}%
(X^{n}(s))A_{m,j}(s,t),
\]
we have
\begin{align*}
|I_{i}^{n}(s,t)|& \leq C_{3}(\delta)|t-s|^{3\alpha}+\bar d^{3}M^{2}||A||_{2\alpha
}|t-s|^{2\alpha}\\
&\leq (C_{3}(\delta)\delta^{\alpha}+\bar d^{3}M^{2}||A||_{2\alpha
})|t-s|^{2\alpha}\leq C_{2}(\delta)|t-s|^{2\alpha}%
\end{align*}
for $C_{2}(\delta)\geq C_{3}(\delta)\delta^{\alpha}+\bar d^{3}M^{2}||A||_{2\alpha}%
$.\newline We now analyze the recursion \eqref{eq:recur} term by term. First,
\[
|\mu_{i}(X^{n}(s))-\mu_{i}(X^{n}(r))|\leq MC_{1}(\delta)|s-r|^{\alpha},
\]%
\[
|\sigma_{i,j}(X^{n}(s))-\sigma_{i,j}(X^{n}(r))-\sum_{l=1}^{d}\partial
_{l}\sigma_{i,j}(X^{n}(r))(X_{l}^{n}(s)-X_{l}^{n}(r))|\leq MC_{1}(\delta
)^{2}|s-r|^{2\alpha},
\]%
\[
|\sum_{l=1}^{d}\partial_{l}\sigma_{i,j}(X^{n}(r))I_{l}^{n}(r,s)|\leq
\bar dMC_{2}(\delta)|s-r|^{2\alpha},
\]
\[
\sum_{l=1}^{d}\partial_{l}\sigma_{i,j}(X^{n}(r))\mu_{i}(X^{n}(r))(s-r) \leq
\bar dM^2|s-r|
\]
and
\[
|\partial_{l}\sigma_{i,j}(X^{n}(s))\sigma_{l,m}(X^{n}(s))-\partial_{l}%
\sigma_{i,j}(X^{n}(r))\sigma_{l,m}(X^{n}(r))|\leq2M^{2}C_{1}(\delta
)|s-r|^{\alpha}.
\]
Then
\begin{align*}
&|J_{i}^{n}(r,t)|\\
\leq &  |J_{i}^{n}(r,s)|+|J_{i}^{n}(s,t)|\\
&  +(MC_{1}(\delta)+\bar dMC_{1}(\delta)^{2}||Z||_{\alpha}+\bar d^{2}MC_{2}%
(\delta)||Z||_{\alpha}+\bar d^2M^2||Z||_{\alpha}\\
& +2\bar d^{3}M^{2}C_{1}(\delta)||A||_{\alpha})|t-r|^{3\alpha}%
\end{align*}
Likewise, we have
\begin{align*}
& |J_{i}^{n}(s,t)|\\
\leq &  |J_{i}^{n}(s,s+\Delta_{n})|+|J_{i}^{n}(s+\Delta
_{n},t)|\\
&  +(MC_{1}(\delta)+\bar dMC_{1}(\delta)^{2}||Z||_{\alpha}+\bar d^{2}MC_{2}%
(\delta)||Z||_{\alpha}+\bar d^2M^2||Z||_{\alpha}\\
& +2\bar d^{3}M^{2}C_{1}(\delta)||A||_{\alpha})|t-s|^{3\alpha}\\
=  &  |J_{i}^{n}(s+\Delta_{n},t)|\\
&  +(MC_{1}(\delta)+\bar dMC_{1}(\delta)^{2}||Z||_{\alpha}+\bar d^{2}MC_{2}%
(\delta)||Z||_{\alpha}+\bar d^2M^2||Z||_{\alpha}\\
& +2\bar d^{3}M^{2}C_{1}(\delta)||A||_{\alpha})|t-s|^{3\alpha}.
\end{align*}
Then
\begin{align*}
& |J_{i}^{n}(r,t)|\\
\leq &  |J_{i}^{n}(r,s)|+|J_{i}^{n}(s+\Delta_{n},t)|\\
&  +2(MC_{1}(\delta)+\bar dMC_{1}(\delta)^{2}||Z||_{\alpha}+\bar d^{2}MC_{2}%
(\delta)||Z||_{\alpha}+\bar d^2M^2||Z||_{\alpha}\\
& +2\bar d^{3}M^{2}C_{1}(\delta)||A||_{\alpha})|t-s|^{3\alpha}\\
\leq &  \{  2^{1-3\alpha}C_{3}(\delta)+2(MC_{1}(\delta)+\bar dMC_{1}%
(\delta)^{2}||Z||_{\alpha}+\bar d^{2}MC_{2}(\delta)||Z||_{\alpha}\\
&+\bar d^2M^2||Z||_{\alpha}+2\bar d^{3}M^{2}C_{1}(\delta)||A||_{\alpha})\}  |t-s|^{3\alpha}\\
\leq &  C_{3}(\delta)|t-s|^{3\alpha},
\end{align*}
for
\begin{eqnarray*}
(1-2^{1-3\alpha})C_{3}(\delta)&\geq&2(MC_{1}(\delta)+\bar dMC_{1}(\delta)^{2}%
||Z||_{\alpha}+\bar d^{2}MC_{2}(\delta)||Z||_{\alpha}\\
&&+\bar d^2M^2||Z||_{\alpha}+2\bar d^{3}M^{2}C_{1}(\delta)||A||_{\alpha}).
\end{eqnarray*}
Therefore, if we deliberately choose $\delta$, $C_{1}(\delta)$,
$C_{2}(\delta)$ and $C_{3}(\delta)$ such that
\begin{align} \label{eq:in_delta}
C_{1}(\delta)  &  \geq C_{3}(\delta)\delta^{2\alpha}+M\delta^{1-\alpha
}+\bar dM||Z||_{\alpha}+\bar d^{3}M^{2}||A||_{2\alpha}\delta^{\alpha}\nonumber \\
C_{2}(\delta)  &  \geq C_{3}(\delta)\delta^{\alpha}+\bar d^{3}M^{2}||A||_{2\alpha}\nonumber \\
C_{3}(\delta)  &  \geq \frac{2}{1-2^{1-3\alpha}}(  MC_{1}(\delta
)+\bar dMC_{1}(\delta)^{2}||Z||_{\alpha}+\bar d^{2}MC_{2}(\delta)||Z||_{\alpha}\nonumber \\
& +\bar d^2M^2||Z||_{\alpha}+2\bar d^{3}M^{2}C_{1}(\delta)||A||_{\alpha})
\end{align}
Then we have for $|t-r|<\delta$,
\begin{align*}
||X^{n}(t)-X^{n}(r)||_{\infty}  &  \leq C_{1}(\delta)|t-r|^{\alpha},\\
||I^{n}(r,t)||_{\infty}  &  \leq C_{2}(\delta)|t-r|^{2\alpha},\\
||J^{n}(r,t)||_{\infty}  &  \leq C_{3}(\delta)|t-r|^{3\alpha},
\end{align*}
The existence of $\delta$, $C_{1}(\delta)$, $C_{2}(\delta)$ and $C_{3}(\delta)$, satisfying the system of inequalities \eqref{eq:in_delta}, follows from Lemma \ref{lm:pA}.

We now extend the analysis to the case when
$|t-r|>\delta$. For $n$ large enough ($\Delta_{n}<\delta/2$), if
$|t-r|>\delta$, we can always find points $s_{i}\in D_{n}$ and $r=s_{0}%
<s_{1}<\cdots<s_{k}=t$ such that $\max_{1\leq i\leq k}|s_{i}-s_{i-1}|<\delta$
and $\min_{1\leq i\leq k}|s_{i}-s_{i-1}|\geq\delta/2$. Then
\[
|X_{i}^{n}(t)-X_{i}^{n}(r)|\leq\sum_{l=1}^{k}|X_{i}^{n}(s_{l})-X_{i}%
^{n}(s_{l-1})|\leq kC_{1}(\delta)|t-r|^{\alpha}\leq\frac{2}{\delta}%
C_{1}(\delta)|t-r|^{\alpha}%
\]
Let $C_{1}=\frac{2}{\delta}C_{1}(\delta)$ and we can write $||X^{n}%
(t)-X^{n}(r)||_{\infty}\leq C_{1}|t-r|^{\alpha}$. Next,
\begin{align*}
|I_{i}^{n}(r,t)|\leq &  \sum_{l=1}^{k}\{|I_{i}^{n}(s_{l-1},s_{l})|+|(\mu
_{i}(X^{n}(s_{l}))-\mu_{i}(X^{n}(s_{0})))(s_{l}-s_{l-1})|\\
&  +|\sum_{j=1}^{d^{\prime}}(\sigma_{ii}(X^{n}(s_{l}))-\sigma_{ij}(X^{n}(s_{0})))(Z_j(s_{l+1}%
)-Z_j(s_{l}))|\}\\
\leq &  k[C_{2}(\delta)|t-r|^{2\alpha}+MC_{1}|t-r|^{1+\alpha}+dMC_{1}%
||Z||_{\alpha}|t-r|^{2\alpha}]\\
\leq &  \frac{2}{\delta}(C_{2}(\delta)+MC_{1}+\bar dMC_{1}||Z||_{\alpha
})|t-r|^{2\alpha}%
\end{align*}
By setting $C_{2}=\frac{2}{\delta}(C_{2}(\delta)+MC_{1}+\bar dMC_{1}||Z||_{\alpha
})$, we have $||I^{n}(r,t)||_{\infty}\leq C_{2}|t-r|^{2\alpha}$.\newline
Now following the same induction analysis on $J_{i}^{n}(s,t)$ as we did in the
case $|t-s|<\delta$, we have
\begin{align*}
|J_{i}^{n}(r,t)|\leq& \frac{2}{2^{3\alpha}}C_{3}|t-r|^{3\alpha}\\
&+2(MC_{1}+\bar dMC_{1}^{2}||Z||_{\alpha}+\bar d^{2}MC_{2}||Z||_{\alpha}+2\bar d^{3}M^{2}%
C_{1}||A||_{\alpha})|t-r|^{3\alpha}%
\end{align*}
If we choose
\[
C_{3}=\frac{2}{1-2^{1-3\alpha}}(MC_{1}+\bar dMC_{1}^{2}||Z||_{\alpha}+\bar d^{2}%
MC_{2}||Z||_{\alpha}+2\bar d^{3}M^{2}C_{1}||A||_{\alpha}),
\]
then $||J^{n}(r,t)||_{\infty}\leq C_{3}|t-s|^{3\alpha}$.\newline
\end{proof}

\begin{lemma}
\label{lm:tech2} Let $x(0)$ and $\tilde x(0)\in R^{d}$ be two different
vectors. We denote $X^{n}(t)$ and $\tilde X^{n}(t)$ for $t\in D_{n}$ as the $%
n$-th dyadic approximation defined by \eqref{eq:recursion2} with initial
value $x(0)$ and $\tilde x(0)$ respectively. Under the conditions of Theorem %
\ref{th:main}, there exists a constant $B$, independent of $n$, such that
for $t \in D_{n}$,
\begin{equation*}
||X^{n}(t)-\tilde X^{n}(t)-(X^{n}(0)-\tilde X^{n}(0))||_{\infty}\leq
Bt^{\alpha}||X^{n}(0)-\tilde X^{n}(0)||_{\infty}.
\end{equation*}
Moreover,
\begin{equation*}
||X^{n}(t)-\tilde X^{n}(t)||_{\infty}\leq(1+B)||X^{n}(0)-\tilde
X^{n}(0)||_{\infty}.
\end{equation*}
\end{lemma}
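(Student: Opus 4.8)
The plan is to run, for the difference process $Y^{n}:=X^{n}-\tilde X^{n}$, the same two-stage argument used in the proof of Lemma~\ref{lm:tech1} (a coupled induction on the interval length for short intervals, then a chaining argument), but now in ``linearized'' form. Write $K_{0}:=\|X^{n}(0)-\tilde X^{n}(0)\|_{\infty}$, $g_{i}(u):=\mu_{i}(X^{n}(u))-\mu_{i}(\tilde X^{n}(u))$, $\ell_{i,j}(u):=\sigma_{i,j}(X^{n}(u))-\sigma_{i,j}(\tilde X^{n}(u))$, $q_{i,j,l,m}(u):=(\partial_{l}\sigma_{i,j}\,\sigma_{l,m})(X^{n}(u))-(\partial_{l}\sigma_{i,j}\,\sigma_{l,m})(\tilde X^{n}(u))$, and set
\begin{align*}
\mathcal{I}_{i}^{n}(r,t) & :=Y_{i}^{n}(t)-Y_{i}^{n}(r)-g_{i}(r)(t-r)-\sum_{j=1}^{d^{\prime}}\ell_{i,j}(r)\big(Z_{j}(t)-Z_{j}(r)\big),\\
\mathcal{J}_{i}^{n}(r,t) & :=\mathcal{I}_{i}^{n}(r,t)-\sum_{j,l,m}q_{i,j,l,m}(r)A_{m,j}(r,t).
\end{align*}
Subtracting recursion~\eqref{eq:recursion2} for $\tilde X^{n}$ from the one for $X^{n}$, and using~\eqref{Eq:RDE_Area}, produces recursions for $\mathcal{I}^{n}$ and $\mathcal{J}^{n}$ of exactly the same structure as those in the proof of Lemma~\ref{lm:tech1}, with every coefficient replaced by the corresponding difference of evaluations along $X^{n}$ and $\tilde X^{n}$; in particular $\mathcal{J}^{n}(r,r)=\mathcal{J}^{n}(r,r+\Delta_{n})=0$, which seeds the induction.

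The first task is to bound the new coefficient differences. The boundedness hypotheses give $|g_{i}(u)|,|\ell_{i,j}(u)|\le M\|Y^{n}(u)\|_{\infty}$, and since $x\mapsto\partial_{l}\sigma_{i,j}(x)\sigma_{l,m}(x)$ and $x\mapsto\partial_{l}\sigma_{i,j}(x)\mu_{l}(x)$ are Lipschitz with constant $\le 2M^{2}$ (using $\|\sigma^{(i)}\|_{\infty}\le M$ for $i=0,1,2,3$ and $\|\mu^{(1)}\|_{\infty}\le M$), one also gets $|q_{i,j,l,m}(u)|\le 2M^{2}\|Y^{n}(u)\|_{\infty}$, together with Lipschitz-in-time increments of the various composite coefficients obtained by combining these Lipschitz bounds with the H\"older estimates $\|X^{n}(t)-X^{n}(r)\|_{\infty},\|\tilde X^{n}(t)-\tilde X^{n}(r)\|_{\infty}\le C_{1}|t-r|^{\alpha}$ from Lemma~\ref{lm:tech1}. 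Writing $S:=\sup_{v\in D_{n}\cap[0,1]}\|Y^{n}(v)\|_{\infty}$ and bounding every pointwise $\|Y^{n}(u)\|_{\infty}$ by $S$, the recursions acquire the form
\begin{equation*}
|\mathcal{J}_{i}^{n}(r,t)|\le|\mathcal{J}_{i}^{n}(r,s)|+|\mathcal{J}_{i}^{n}(s,t)|+\big(MB_{1}+MB_{1}^{2}\|Z\|_{\alpha}+MB_{2}\|Z\|_{\alpha}+2M^{2}B_{1}\|A\|_{2\alpha}\big)S\,|t-r|^{3\alpha},
\end{equation*}
and similarly for $|\mathcal{I}_{i}^{n}(r,t)|$ and for $\|Y^{n}(t)-Y^{n}(r)\|_{\infty}$ --- these being the analogues of the inequalities defining $C_{1},C_{2},C_{3}$ in Procedure~A, with $S$ pulled out. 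The midpoint-splitting induction closes because $3\alpha>1$, so $2^{1-3\alpha}<1$, and yields $\delta^{\prime},B_{1}(\delta^{\prime}),B_{2}(\delta^{\prime}),B_{3}(\delta^{\prime})$ satisfying the $B$-relations of Procedure~A with, for $r,t\in D_{n}$ and $|t-r|<\delta^{\prime}$,
\begin{equation*}
\|Y^{n}(t)-Y^{n}(r)\|_{\infty}\le B_{1}(\delta^{\prime})|t-r|^{\alpha}\sup_{v\in[r,t]\cap D_{n}}\|Y^{n}(v)\|_{\infty}.
\end{equation*}

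The main obstacle is closing this short-interval estimate, which is self-referential in that the supremum of $Y^{n}$ appears on both sides. The key observation is that the defining relations force $B_{1}(\delta^{\prime})\to 2M\|Z\|_{\alpha}$ --- in particular $B_{1}(\delta^{\prime})$ stays bounded --- as $\delta^{\prime}\downarrow 0$, since every other term in the $B_{1}$-relation carries a positive power of $\delta^{\prime}$ while $B_{3}(\delta^{\prime})$ stays bounded (its relation involves only $M$, $\|Z\|_{\alpha}$, $\|A\|_{2\alpha}$ and the already-bounded $B_{1},B_{2}$). Hence $B_{1}(\delta^{\prime})(\delta^{\prime})^{\alpha}\to 0$, so we may fix $\delta^{\prime}$ small enough that $2B_{1}(\delta^{\prime})(\delta^{\prime})^{\alpha}\le\frac12$. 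For $s_{0}<s_{1}$ in $D_{n}$ with $|s_{1}-s_{0}|\le\delta^{\prime}$ the short-interval bound then gives $\sup_{[s_{0},s_{1}]}\|Y^{n}\|_{\infty}\le\|Y^{n}(s_{0})\|_{\infty}+2B_{1}(\delta^{\prime})(\delta^{\prime})^{\alpha}\sup_{[s_{0},s_{1}]}\|Y^{n}\|_{\infty}$, whence $\sup_{[s_{0},s_{1}]}\|Y^{n}\|_{\infty}\le 2\|Y^{n}(s_{0})\|_{\infty}$. Chaining this over the $\lceil 1/\delta^{\prime}\rceil$ sub-intervals of a mesh-$\delta^{\prime}$ partition of $[0,1]$ gives $S\le 2^{\lceil 1/\delta^{\prime}\rceil}K_{0}=:(1+B)K_{0}$, which is the second displayed inequality of the lemma (and the constant is independent of $n$).

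Finally, feeding $S\le(1+B)K_{0}$ back into the short-interval estimate turns it into $\|Y^{n}(t)-Y^{n}(r)\|_{\infty}\le B_{1}(\delta^{\prime})(1+B)K_{0}|t-r|^{\alpha}$ for $|t-r|\le\delta^{\prime}$; chaining $[0,t]$ into at most $\lceil 2t/\delta^{\prime}\rceil$ pieces of length in $[\delta^{\prime}/2,\delta^{\prime}]$ and using $\sum_{l}|s_{l}-s_{l-1}|^{\alpha}\le(\delta^{\prime})^{\alpha-1}t^{\alpha}$ for $t\le 1$ then gives $\|Y^{n}(t)-Y^{n}(0)\|_{\infty}\le(\delta^{\prime})^{\alpha-1}B_{1}(\delta^{\prime})(1+B)K_{0}\,t^{\alpha}$, i.e.\ the first displayed inequality after renaming the ($n$-free) constant as $B$. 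The one point that requires genuine care beyond this bookkeeping is the drift contribution $(g_{i}(s)-g_{i}(r))(t-s)$ to the $\mathcal{J}^{n}$-recursion: because $\|\mu^{(2)}\|_{\infty}$ is not controlled, the ``second difference'' $g_{i}(s)-g_{i}(r)$ is not $O(\|Y^{n}\|_{\infty}|s-r|^{\alpha})$ in one step, so one writes $g_{i}(s)-g_{i}(r)=\bar h_{i}(r)\cdot(Y^{n}(s)-Y^{n}(r))+(\bar h_{i}(s)-\bar h_{i}(r))\cdot Y^{n}(s)$ with $\bar h_{i}(u):=\int_{0}^{1}\nabla\mu_{i}(\tilde X^{n}(u)+\tau Y^{n}(u))\,d\tau$, bounds the first piece by the inductive H\"older bound on $\|Y^{n}(s)-Y^{n}(r)\|_{\infty}$, and exploits the extra time factor $(t-s)$ (licit since $\alpha<1/2$, so $|t-s|^{1-2\alpha}\le(\delta^{\prime})^{1-2\alpha}$) to keep the total contribution at H\"older exponent strictly above $1$ in $|t-r|$.
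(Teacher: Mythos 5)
Your overall route is the paper's: subtract the two instances of recursion (\ref{eq:recursion2}), observe that the difference process satisfies a recursion of exactly the same algebraic type as the one analyzed in Lemma \ref{lm:tech1}, and rerun the midpoint-splitting induction with every coefficient replaced by the corresponding coefficient difference. (The paper normalizes by $\|X^{n}(0)-\tilde X^{n}(0)\|_{\infty}$ and then simply asserts ``following Lemma \ref{lm:tech1}''; you keep $K_{0}$ explicit and supply the bootstrap for the self-referential supremum $S$. Your observation that $B_{1}(\delta')$ remains bounded as $\delta'\downarrow 0$, so that $B_{1}(\delta')(\delta')^{\alpha}$ can be made small and the supremum closed by chaining, is correct and is in fact needed to make the paper's one-line conclusion rigorous.)

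There is, however, one step that fails as written: the drift second difference. You rightly note that $g_{i}(s)-g_{i}(r)$ is not $O(S|s-r|^{\alpha})$ under $\|\mu'\|_{\infty}\le M$ alone, and you split $g_{i}(s)-g_{i}(r)=\bar h_{i}(r)\cdot(Y^{n}(s)-Y^{n}(r))+(\bar h_{i}(s)-\bar h_{i}(r))\cdot Y^{n}(s)$. The first piece times $(t-s)$ is $O(S|t-r|^{1+\alpha})$ and is fine. But for the second piece the only bound available under the stated hypotheses is $|\bar h_{i}(s)-\bar h_{i}(r)|\le 2M$ (a merely bounded gradient has no modulus of continuity), so its contribution is $O(S|t-r|)$ --- exponent exactly $1$. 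Your factoring $|t-s|=|t-s|^{2\alpha}|t-s|^{1-2\alpha}$ only trades this for exponent $2\alpha<1$, which is worse, not better. Exponent $1$ does not close the dyadic induction uniformly in $n$: unrolling $|\mathcal{J}^{n}(r,t)|\le|\mathcal{J}^{n}(r,s)|+|\mathcal{J}^{n}(s+\Delta_{n},t)|+bS|t-r|$ yields an extra factor $\log_{2}(|t-r|/\Delta_{n})$, hence an $n$-dependent constant. (The paper's own proof is silent on this point, so you have at least located the real difficulty.) The clean repair is to keep the drift out of the $3\alpha$-exponent object altogether: the drift enters $Y^{n}(t)-Y^{n}(r)$ only through the Riemann sum $\sum_{t_{k}^{n}\in[r,t)}g_{i}(t_{k}^{n})\Delta_{n}$, which is exactly additive over concatenation of intervals and bounded by $MS|t-r|$; defining $\mathcal{J}^{n}$ by subtracting this sum instead of $g_{i}(r)(t-r)$ removes the drift error term from the recursion entirely, the remaining ($\sigma$-related) second differences are controlled because $\sigma$ has three bounded derivatives, and the additive term $MS|t-r|$ is harmless in the final $\alpha$-H\"older estimate since $1>\alpha$. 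Alternatively, assuming $\mu\in C^{2}_{b}$ makes your original decomposition work, since then $|\bar h_{i}(s)-\bar h_{i}(r)|\le C|s-r|^{\alpha}$.
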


\begin{proof}
Let
\[
Y_{i,h}^{n}(t)=\frac{X_{i}^{n}(t)-\tilde X_{i}^{n}(t)}{||X_{h}^{n}(0)-\tilde
X_{h}^{n}(0)||_{\infty}}%
\]
We define $0/0=0$.\newline Then following the recursion \eqref{eq:recursion2},
we have
\begin{align}
\label{eq:recursion5} &  Y_{i}^{n}(t_{k+1}^{n})\nonumber\\
=  &  Y_{i}^{n}(t_{k}^{n})+\frac{\mu_{i}(X^{n}(t_{k}^{n}))-\mu_{i}(\tilde
X^{n}(t_{k}^{n}))}{||X^{n}(0)-\tilde X^{n}(0)||_{\infty}}\Delta
_{n}\nonumber\\
&+\sum_{j=1}^{d^{\prime}}\frac{\sigma_{i,j}(X^{n}(t_{k}^{n}))-\sigma
_{i,j}(\tilde X^{n}(t_{k}^{n}))}{||X^{n}(0)-\tilde X^{n}(0)||_{\infty
}}(Z_{j}(t_{k+1}^{n})-Z_{j}(t_{k}^{n}))\nonumber\\
&  +\sum_{j=1}^{d^{\prime}}\sum_{l=1}^{d}\sum_{m=1}^{d^{\prime}}\frac
{\partial_{l}\sigma_{i,j}(X^{n}(t_{k}^{n}))\sigma_{l,m}(X^{n}(t_{k}%
^{n}))-\partial_{l}\sigma_{i,j}(\tilde X^{n}(t_{k}^{n}))\sigma_{l.m}(\tilde
X^{n}(t_{k}^{n}))}{||X^{n}(0)-\tilde X^{n}(0)||_{\infty}}A_{m,j}%
(t_{k}^{n},t_{k+1}^{n})
\end{align}
Then \eqref{eq:recursion2} and \eqref{eq:recursion5} together define an
recursion to generate $X^{n}$, $\tilde X^{n}$ and $Y^{n}$. Following Lemma
\ref{lm:tech1}, there exists a constant $B$ that depends only on $M$,
$||Z||_{\alpha}$ and $||A||_{2\alpha}$, such that
\[
||Y^{n}(t)-Y^{n}(0)||_{\infty}\leq Bt^{\alpha}.
\]
Thus,
\[
||X^{n}(t)-\tilde X^{n}(t)-(X^{n}(0)-\tilde X^{n}(0))||_{\infty}\leq
Bt^{\alpha}||X^{n}(0)-\tilde X^{n}(0)||_{\infty},
\]
and
\[
||X^{n}(t)-\tilde X^{n}(t)||_{\infty}\leq(1+B)||X^{n}(0)-\tilde X^{n}%
(0)||_{\infty}.
\]

\end{proof}

We are now ready to prove Proposition \ref{prop:error1}.

\begin{proof}
[Proof of Proposition \ref{prop:error1}]From Lemma \ref{lm:tech1} we have
$||X^{n}(t)-X^{n}(r)||_{\infty}\leq C_{1}|t-r|^{\alpha}$. By Arzela-Ascoli
Theorem, there exits a subsequence of $\{X^{n}\}$ that converges uniformly to
some continuous function $X$ on $[0,1]$. Moreover we have
$||X(t)-X(r)||_{\infty}\leq C_{1}|t-r|^{\alpha}$ and
\begin{align*}
&  |X_{i}(t)-X_{i}(r)-\mu_{i}(X(r)-\sum_{j=1}^{d^{\prime}}\sigma
_{i,j}(X(r))(Z_{j}(t)-Z_{j}(r))\\
&  -\sum_{j=1}^{d^{\prime}}\sum_{l=1}^{d}\sum_{m=1}^{d^{\prime}}\partial
_{l}\sigma_{i,j}(X(r))\sigma_{l.m}(X(r))A_{m,j}(r,t)|<C_{2}|t-r|^{3\alpha}%
\end{align*}
Therefore, the limit $X$ is a solution to the SDE.\newline Let $X^{n,(s)}%
(t;X(s)):=X^{n}(t-s)|X^{n}(0)=X(s)$. Specifically, we have $X^{n,(0)}%
(t;X(0))=X^{n}(t)$ with $X^{n}(0)=X(0)$, and $X^{n,(t)}(t;X(t))=X(t)$. Then we
can write
\[
X^{n}(t_{m}^{n})-X(t_{m}^{n})=\sum_{k=1}^{m}\left(  X^{n,(t_{k}^{n})}%
(t_{m}^{n};X(t_{k}^{n}))-X^{n,(t_{k-1}^{n})}(t_{m}^{n};X(t_{k-1}^{n}))\right)
\]
By Lemma \ref{lm:tech2}, $||X^{n,(t_{k}^{n})}(t_{m};X(t_{k}^{n}%
))-X^{n,(t_{k-1}^{n})}(t_{m};X(t_{k-1}^{n}))||_{\infty}\leq(1+B)||X(t_{k}%
^{n})-X^{n,t_{k-1}^{n}}(t_{k}^{n};X(t_{k-1}^{n}))||_{\infty}$. We also have
\begin{align*}
&  |X_{i}(t_{k}^{n})-X_{i}^{n,(t_{k-1}^{n})}(t_{k}^{n};X(t_{k-1}^{n}))|\\
=  &  |X_{i}(t_{k}^{n})-X_{i}(t_{k-1}^{n})-\mu_{i}(X(t_{k-1}^{n})(t_{k}%
^{n}-t_{k-1}^{n})-\sum_{j=1}^{d^{\prime}}\sigma_{i,j}(X(t_{k-1}^{n}%
))(Z_{j}(t_{k}^{n})-Z_{j}(t_{k-1}^{n}))\\
&  -\sum_{j=1}^{d^{\prime}}\sum_{l=1}^{d}\sum_{m=1}^{d^{\prime}}\partial
_{l}\sigma_{i,j}(X(t_{k-1}^{n}))\sigma_{l,m}(X(t_{k-1}^{n}))A_{m,j}%
(t_{k-1}^{n},t_{k}^{n})|\\
\leq &  C_{3}|t_{k}^{n}-t_{k-1}^{n}|^{3\alpha}%
\end{align*}
Thus,
\begin{align*}
||X^{n}(t_{m}^{n})-X(t_{m}^{n})||_{\infty}  &  \leq\sum_{k=1}^{m}%
||X^{n,(t_{k}^{n})}(t_{m}^{n};X(t_{k}^{n}))-X^{n,(t_{k-1}^{n})}(t_{m}%
^{n};X(t_{k-1}^{n}))||_{\infty}\\
&  \leq m(1+B)C_{3}\Delta_{n}^{3\alpha}\\
&  \leq(1+B)C_{3}\Delta_{n}^{3\alpha-1}.
\end{align*}
\end{proof}

Next we turn to the analysis of the error induced by approximating the L\'{e}%
vy area.

\begin{proposition}
\label{prop:error2} Under the conditions of Theorem \ref{th:main}, we can
compute a constant $G_{2}$ explicitly in terms of $M$, $||Z||_{\alpha}$, $%
||A||_{2\alpha}$ and $\Gamma_{R}$, such that for $n$ large enough
\begin{equation*}
||\hat{X}^{n}(t)-X^{n}(t)||_{\infty}\leq G_{2}\Delta_{n}^{2\alpha-\beta},
\end{equation*}
where $\beta\in (1-\alpha, 2\alpha)$.
\end{proposition}

The proof of Proposition \ref{prop:error2} uses a similar technique as the
proof of Proposition \ref{prop:error1} and also relies on some auxiliary
results. Let
\begin{eqnarray*}
U_{i}^{n}(s,t)&:=&\hat{X}_{i}^{n}(t)-X_{i}^{n,(s)}(t;\hat{X}^{n}(s))\\
&&+\sum_{j=1}^{d^{\prime}}\sum_{l=1}^{d}\sum_{m=1}^{d^{\prime}}\partial_{l}%
\sigma_{i,j}(\hat{X}^{n}(s))\sigma_{l,m}(\hat{X}^{n}(s))R_{m,j}^{n}(s,t).
\end{eqnarray*}

We first prove the following technical result.

\begin{lemma}
\label{lm:tech3} Under the conditions of Theorem \ref{th:main}, there exists
a constant $C_{4}$, that depends only on $M$, $||Z||_{\alpha}$, $%
||A||_{2\alpha }$ and $\Gamma_{R}$, such that
\begin{equation*}
||U^{n}(r,t)||_{\infty} \leq
C_{4}|t-r|^{\alpha+\beta}\Delta_{n}^{2\alpha-\beta}
\end{equation*}
\end{lemma}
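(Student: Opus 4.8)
The plan is to mimic, for $U^{n}$, the dyadic-induction argument already used for $J^{n}$ in Lemma~\ref{lm:tech1}: first derive an ``almost additive'' recursion for $U^{n}(r,t)$ over a split point $s$, and then run the same two-part induction — small $|t-r|$ by induction on the number of dyadic subintervals, then $|t-r|$ of order one by chaining.

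\textbf{Deriving the recursion.} Write $\mathrm{corr}_{i}(a,b):=\sum_{j,l,m}\partial_{l}\sigma_{i,j}(\hat X^{n}(a))\sigma_{l,m}(\hat X^{n}(a))R_{m,j}^{n}(a,b)$, so that $U^{n}_{i}(s,t)=\hat X^{n}_{i}(t)-X^{n,(s)}_{i}(t;\hat X^{n}(s))+\mathrm{corr}_{i}(s,t)$. For $r\le s\le t$ in $D_{n}$ I would use the flow (Markov) property of both recursions, $\hat X^{n}(t)=\hat X^{n,(s)}(t;\hat X^{n}(s))$ from \eqref{eq:recursion1} and $X^{n,(r)}(t;\hat X^{n}(r))=X^{n,(s)}(t;X^{n,(r)}(s;\hat X^{n}(r)))$ from \eqref{eq:recursion2}, to split
\[
\hat X^{n}(t)-X^{n,(r)}(t;\hat X^{n}(r))=\bigl[\hat X^{n}(t)-X^{n,(s)}(t;\hat X^{n}(s))\bigr]+\bigl[X^{n,(s)}(t;\hat X^{n}(s))-X^{n,(s)}(t;X^{n,(r)}(s;\hat X^{n}(r)))\bigr].
\]
The first bracket is $U^{n}(s,t)-\mathrm{corr}(s,t)$; for the second, Lemma~\ref{lm:tech2} shifted to start at time $s$, applied to the initial conditions $a=\hat X^{n}(s)$ and $b=X^{n,(r)}(s;\hat X^{n}(r))$, together with $a-b=U^{n}(r,s)-\mathrm{corr}(r,s)$, gives $X^{n,(s)}(t;a)-X^{n,(s)}(t;b)=(a-b)+E(r,s,t)$ with $\|E(r,s,t)\|_{\infty}\le B|t-s|^{\alpha}\bigl(\|U^{n}(r,s)\|_{\infty}+\|\mathrm{corr}(r,s)\|_{\infty}\bigr)$. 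Since $R^{n}$ is \emph{exactly} additive over consecutive dyadic intervals, $\mathrm{corr}(r,t)-\mathrm{corr}(r,s)-\mathrm{corr}(s,t)=F(r,s,t)$, with $F_{i}(r,s,t)=\sum_{j,l,m}\bigl[\partial_{l}\sigma_{i,j}\sigma_{l,m}(\hat X^{n}(r))-\partial_{l}\sigma_{i,j}\sigma_{l,m}(\hat X^{n}(s))\bigr]R_{m,j}^{n}(s,t)$, which I would bound by $\|\hat X^{n}(r)-\hat X^{n}(s)\|_{\infty}\,|R^{n}(s,t)|$. Collecting these gives
\[
U^{n}(r,t)=U^{n}(r,s)+U^{n}(s,t)+F(r,s,t)+E(r,s,t),
\]
with base cases $U^{n}(s,s)=U^{n}(s,s+\Delta_{n})=0$ immediate from the two recursions. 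Using the boundedness hypotheses $\|\sigma^{(i)}\|_{\infty}\le M$, the estimate $|R^{n}(a,b)|\le\Gamma_{R}|b-a|^{\beta}\Delta_{n}^{2\alpha-\beta}$, and a H\"older bound $\|\hat X^{n}(t)-\hat X^{n}(s)\|_{\infty}\le C_{1}|t-s|^{\alpha}$ for $s,t\in D_{n}$ (proved by the same induction as in Lemma~\ref{lm:tech1}, indeed more easily since no L\'evy-area term appears), one obtains $\|\mathrm{corr}(a,b)\|_{\infty}\le d^{3}M^{2}\Gamma_{R}|b-a|^{\beta}\Delta_{n}^{2\alpha-\beta}$ and $\|F(r,s,t)\|_{\infty}\le 2d^{3}M^{2}C_{1}\Gamma_{R}|s-r|^{\alpha}|t-s|^{\beta}\Delta_{n}^{2\alpha-\beta}$; crucially, both $F$ and $E$ carry an \emph{extra} factor $|t-r|^{\alpha}$ relative to $|R^{n}|$, which is exactly what will make the induction contract.

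\textbf{Running the induction.} With this recursion I would repeat the two-part scheme of Lemma~\ref{lm:tech1}. For $|t-r|<\delta^{\prime\prime}$: induct on the number of dyadic subintervals; pick $s\in D_{n}$ largest with $|s-r|\le|t-r|/2$, apply the recursion at $s$ and once more on $[s,t]$ at $s+\Delta_{n}$ (using $U^{n}(s,s+\Delta_{n})=0$), and invoke the inductive hypothesis on the two pieces, each of length $<|t-r|/2$. This produces $\|U^{n}(r,t)\|_{\infty}\le\bigl(2^{-(\alpha+\beta)}(2+B(\delta^{\prime\prime})^{\alpha})\,C_{4}(\delta^{\prime\prime})+2d^{3}M^{2}\Gamma_{R}(2C_{1}+B)\bigr)|t-r|^{\alpha+\beta}\Delta_{n}^{2\alpha-\beta}$, so the induction closes precisely when $B(\delta^{\prime\prime})^{\alpha}\le 2^{\alpha+\beta}-2$ and $C_{4}(\delta^{\prime\prime})$ dominates the residual constant — this is exactly step~6 of Procedure~A. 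For $|t-r|\ge\delta^{\prime\prime}$: split $[r,t]$ into $k\le 2/\delta^{\prime\prime}$ dyadic subintervals of length between $\delta^{\prime\prime}/2$ and $\delta^{\prime\prime}$, telescope the recursion, and use $\alpha+\beta>1$ to absorb the powers; this yields the constant $C_{4}$ of step~7 of Procedure~A and hence the claim for all $r,t\in D_{n}$, valid once $n$ is large enough that $\Delta_{n}<\delta^{\prime\prime}/2$ and the constants of Lemmas~\ref{lm:tech1} and~\ref{lm:tech2} are in force.

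\textbf{Main obstacle.} The substantive step is getting the recursion above correct: one must cleanly separate the three effects — the Euler/rough discrepancy carried by $U^{n}$ itself, the change of base point of the coefficient $\partial_{l}\sigma_{i,j}\sigma_{l,m}$ from $r$ to $s$ (the term $F$), and the nonlinear drift of the flow map of the rough scheme (the remainder $E$ from Lemma~\ref{lm:tech2}) — and then check that $F$ and $E$ are genuinely of order $|t-r|^{\alpha+\beta}\Delta_{n}^{2\alpha-\beta}$ rather than merely $|t-r|^{\beta}\Delta_{n}^{2\alpha-\beta}$, the extra $|t-r|^{\alpha}$ coming respectively from the H\"older modulus of $\hat X^{n}$ and from the $|t-s|^{\alpha}$ gain in Lemma~\ref{lm:tech2}. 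A secondary point is that Lemma~\ref{lm:tech1} as stated controls $X^{n}$, not $\hat X^{n}$, so one needs the analogous (and strictly easier) H\"older bound for $\hat X^{n}$; this is routine, and after possibly enlarging $C_{1}$ the estimates of Procedure~A go through unchanged.
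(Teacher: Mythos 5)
Your proposal is correct and follows essentially the same route as the paper: the identical decomposition of $U^{n}(r,t)$ via the flow property and Lemma~\ref{lm:tech2}, the same bounds on the coefficient-change term $F$ and the remainder $E$ using the additivity of $R^{n}$ and the $\Gamma_{R}$ estimate, and the same two-part dyadic induction matching steps 6--7 of Procedure~A. Your remark that the H\"older bound is needed for $\hat{X}^{n}$ rather than $X^{n}$ (and follows by the same, simpler, induction) is a point the paper uses implicitly without comment, and you handle it correctly.
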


\begin{proof}
For $0\leq r<s<t\leq1$, $r,s,t\in D_{n}$, we have
\begin{align*}
&U_{i}^{n}(r,t)\\
=&  U_{i}^{n}(r,s)+U_{i}^{n}(s,t)\\
&  +\left[  X_{i}^{n,(s)}(t;\hat{X}^{n}(s))-X_{i}^{n,(r)}(t;\hat{X}%
^{n}(r))-(\hat{X}_{i}^{n}(s)-X_{i}^{n,(r)}(s;\hat{X}^{n}(r)))\right] \\
&  -\sum_{j=1}^{d^{\prime}}\sum_{l=1}^{d}\sum_{m=1}^{d^{\prime}}\left(
\partial_{l}\sigma_{i,j}(\hat{X}^{n}(s))\sigma_{l,m}(\hat{X}^{n}%
(s))-\partial_{l}\sigma_{i,j}(\hat{X}^{n}(r))\sigma_{l,m}(\hat{X}%
^{n}(r))\right)  R_{m,j}^{n}(s,t)
\end{align*}
From Lemma \ref{lm:tech2},
\begin{align*}
&  |X_{i}^{n,(s)}(t;\hat{X}^{n}(s))-X_{i}^{n,(r)}(t;\hat{X}^{n}(r))-\left(
\hat{X}_{i}^{n}(s)-X_{i}^{n,(r)}(s;\hat{X}^{n}(r))\right)  |\\
\leq &  B|t-s|^{\alpha}||\hat{X}^{n}(s)-X^{n,(r)}(s;\hat{X}^{n}(r))||_{\infty}%
\end{align*}
From Lemma \ref{lm:tech1},
\begin{align*}
&  \left\vert \left(  \partial_{l}\sigma_{i,j}(\hat{X}^{n}(s))\sigma
_{l,m}(\hat{X}^{n}(s))-\partial_{l}\sigma_{i,j}(\hat{X}^{n}(r))\sigma
_{l,m}(\hat{X}^{n}(r))\right)  R_{m,j}^{n}(s,t)\right\vert \\
\leq &  2M^{2}C_{1}|s-r|^{\alpha}\Gamma_{R}|t-s|^{\beta}\Delta_{n}%
^{2\alpha-\beta}\\
\leq &  2M^{2}C_{1}\Gamma_{R}|t-r|^{\alpha+\beta}\Delta_{n}^{2\alpha-\beta}%
\end{align*}
Therefore,
\begin{align}
&  ||U^{n}(r,t)||_{\infty}\nonumber\label{eq:recursion3}\\
\leq &  ||U^{n}(r,s)||_{\infty}+||U^{n}(s,t)||_{\infty}+B|t-s|^{\alpha}%
||\hat{X}^{n}(s)-X^{n,(r)}(s;\hat{X}^{n}(r))||_{\infty}\nonumber\\
&  +2\bar d^{3}M^{2}C_{1}\Gamma_{R}|t-r|^{\alpha+\beta}\Delta_{n}^{2\alpha-\beta
}\nonumber\\
\leq &  ||U^{n}(r,s)||_{\infty}+||U^{n}(s,t)||_{\infty}+B|t-s|^{\alpha}%
||U^{n}(r,s)||_{\infty}\nonumber\\
&  +B|t-s|^{\alpha}\max_{i}\{|\sum_{j=1}^{d^{\prime}}\sum_{l=1}^{d}\sum
_{m=1}^{d^{\prime}}\partial_{l}\sigma_{i,j}(\hat{X}^{n}(r))\sigma_{l,m}%
(\hat{X}^{n}(r))R_{m,j}^{n}(r,s)|\}\nonumber\\
&  +2\bar d^{3}M^{2}C_{1}\Gamma_{R}|t-r|^{\alpha+\beta}\Delta_{n}^{2\alpha-\beta
}\nonumber\\
\leq &  (1+B|t-s|^{\alpha})||U^{n}(r,s)||_{\infty}+||U^{n}(s,t)||_{\infty
}\nonumber\\
&  +(B\bar d^{3}M^{2}\Gamma_{R}+2\bar d^{3}M^{2}C_{1}\Gamma_{R})|t-r|^{\alpha+\beta
}\Delta_{n}^{2\alpha-\beta}%
\end{align}
where $\bar d=\max\{d,d^{\prime}\}$.\\
Like the proof of Lemma \ref{lm:tech1}, we divide the proof into two parts. We
first prove that there exist a small enough constant $\delta>0$ and a large
enough constant $C_{4}(\delta)$, both independent of $n$, such that for
$|t-r|<\delta$, $|U^{n}(r,t)|\leq C_{4}(\delta)|t-r|^{\alpha+\beta}\Delta
_{n}^{2\alpha-\beta}$\textit{.} And we prove it by induction. First we have
$U_{t_{k}^{n},t_{k}^{n}}^{n}=0$ and $U_{t_{k}^{n},t_{k+1}^{n}}^{n}=0$. Suppose
the bound holds for all pairs $r_{0},t_{0}\in D_{n}$ with $|t_{0}%
-r_{0}|<|t-r|$. We pick $s\in D_{n}$ as the largest point between $r$ and $t$
such that $|s-r|\leq1/2|t-r|$. Then we also have $|(s+\Delta_{n})-r|>1/2|t-r|$
and $|t-(s+\Delta_{n})|<1/2|t-r|$.
\begin{align*}
||U^{n}(r,t)||_{\infty}\leq&(1+B|t-s|^{\alpha})||U^{n}(r,s)||_{\infty}%
+||U^{n}(s,t)||_{\infty}\\
&+(B\bar d^{3}M^{2}\Gamma_{R}+2\bar d^{3}M^{2}C_{1}\Gamma
_{R})|t-r|^{\alpha+\beta}\Delta_{n}^{2\alpha-\beta}%
\end{align*}
and
\begin{align*}
&  ||U^{n}(s,t)||_{\infty}\\
\leq &  (1+B\Delta_{n}^{\alpha})||U^{n}(s,s+\Delta_{n})||_{\infty}%
+||U^{n}(s+\Delta_{n},t)||_{\infty}\\
&+(B\bar d^{3}M^{2}\Gamma_{R}+2\bar d^{3}M^{2}%
C_{1}\Gamma_{R})|t-s|^{\alpha+\beta}\Delta_{n}^{2\alpha-\beta}\\
\leq &  ||U^{n}(s+\Delta_{n},t)||_{\infty}+(B\bar d^{3}M^{2}\Gamma_{R}+2\bar d^{3}%
M^{2}C_{1}\Gamma_{R})|t-r|^{\alpha+\beta}\Delta_{n}^{2\alpha-\beta}%
\end{align*}
Therefore,
\begin{align*}
&  ||U^{n}(r,t)||_{\infty}\\
\leq &  (1+B\delta^{\alpha})||U^{n}(r,s)||_{\infty}+||U^{n}(s+\Delta
_{n},t)||_{\infty}\\
&+2(B\bar d^{3}M^{2}\Gamma_{R}+2\bar d^{3}M^{2}C_{1}\Gamma
_{R})|t-r|^{\alpha+\beta}\Delta_{n}^{2\alpha-\beta}\\
\leq &  \frac{2+B\delta^{\alpha}}{2^{\alpha+\beta}}C_{4}(\delta)|t-r|^{\alpha
+\beta}\Delta_{n}^{2\alpha-\beta}+2(B\bar d^{3}M^{2}\Gamma_{R}+2\bar d^{3}M^{2}%
C_{1}\Gamma_{R})|t-r|^{\alpha+\beta}\Delta_{n}^{2\alpha-\beta}%
\end{align*}
If we pick $\delta$ and $C_{4}(\delta)$ such that
\[
B\delta^{\alpha}\leq2^{\alpha+\beta}-2
\]
and
\[
(1-\frac{2+B\delta^{\alpha}}{2^{\alpha+\beta}})C_{4}(\delta)\geq2(B\bar d^{3}%
M^{2}\Gamma_{R}+2\bar d^{3}M^{2}C_{1}\Gamma_{R}),
\]
Then $||U^{n}(r,t)||_{\infty}\leq C(\delta)|t-r|^{\alpha+\beta}\Delta
_{n}^{2\alpha-\beta}$. We next extend the result to the case when
$|t-r|>\delta$. We can always divide the interval $[r,t]$ into smaller
intervals of length less than $\delta$, specifically, for $n$ large enough, we
consider $r=s_{0}<s_{1}<\cdots<s_{k}=t$ where $s_{i}\in D_{n}$ and
$1/2\delta<|s_{i}-s_{i-1}|<\delta$ for $i=1,2,\dots,k$. Then $k<2|t-r|/\delta
\leq2/\delta$ and
\begin{align*}
&  ||U^{n}(r,t)||_{\infty}\\
\leq &  (1+B|s_{1}-s_{0}|^{\alpha})||U^{n}(s_{0},s_{0})||_{\infty}%
+||U^{n}(s_{1},s_{2})||_{\infty}\\
&+(B\bar d^{3}M^{2}\Gamma_{R}+2\bar d^{3}M^{2}C_{1}%
\Gamma_{R})|t-r|^{\alpha+\beta}\Delta_{n}^{2\alpha-\beta}\\
\leq &  \sum_{i=1}^{k}(1+B\delta^{\alpha})||U^{n}(s_{i-1},s_{i})||_{\infty
}+k(B\bar d^{3}M^{2}\Gamma_{R}+2\bar d^{3}M^{2}C_{1}\Gamma_{R})|t-r|^{\alpha+\beta
}\Delta_{n}^{2\alpha-\beta}\\
\leq &  (1+B\delta^{\alpha})C_{4}(\delta)\Delta_{n}^{2\alpha-\beta}\sum
_{i=1}^{k}|s_{i}-s_{i-1}|^{\alpha+\beta}\\
&+k(B\bar d^{3}M^{2}\Gamma_{R}+2\bar d^{3}%
M^{2}C_{1}\Gamma_{R})|t-r|^{\alpha+\beta}\Delta_{n}^{2\alpha-\beta}\\
\leq &  (1+B\delta^{\alpha})C_{4}(\delta)|t-r|^{\alpha+\beta}\Delta
_{n}^{2\alpha-\beta}+\frac{2}{\delta}(B\bar d^{3}M^{2}\Gamma_{R}+2\bar d^{3}M^{2}%
C_{1}\Gamma_{R})|t-r|^{\alpha+\beta}\Delta_{n}^{2\alpha-\beta}\\
\leq &  C_{4}|m-k|^{\alpha+\beta}\Delta_{n}^{2\alpha-\beta}%
\end{align*}
for $C_{4}\geq(1+B\delta^{\alpha})C_{4}(\delta)+2(B\bar d^{3}M^{2}\Gamma_{R}%
+2\bar d^{3}M^{2}C_{1}\Gamma_{R})/\delta$.\newline
\end{proof}

We are now ready to prove Proposition \ref{prop:error2}.

\begin{proof}
[Proof of Proposition \ref{prop:error2}]From Lemma \ref{lm:tech3}, we have
\[
||U^{n}(0,t)||_{\infty}\leq C_{4}t^{\alpha+\beta}\Delta_{n}^{2\alpha-\beta}.
\]
Then
\begin{align*}
|\hat{X}_{i}^{n}(t)-X_{i}^{n}(t)|  &  \leq|U_{i}^{n}(0,t)|+\sum_{j=1}^{d}%
\sum_{l=1}^{d}\sum_{m=1}^{d}|\partial_{l}\sigma_{i,j}(X(0))\sigma
_{l,m}(X(0))||R_{m,j}^{n}(0,t)|\\
&  \leq C_{4}t^{\alpha+\beta}\Delta_{n}^{2\alpha-\beta}+\bar d^{3}M^{2}\Gamma
_{R}t^{\beta}\Delta_{n}^{2\alpha-\beta}\\
&  \leq(C_{4}+\bar d^{3}M^{2}\Gamma_{R})\Delta_{n}^{2\alpha-\beta}.
\end{align*}
\end{proof}

\section{Numerical implementation}
We conducted some numerical experiments. The goal to is demonstrate that the algorithms are implementable and correct.
We would also like to explain some limitations in our implementation process and hope these would provide directions for future improvement of the framework developed here.

\begin{enumerate}
\item For values of ${X(t):0 \leq t \leq1}$ which fluctuate around numerical values around, say 1, (assuming that drift and diffusion coefficients also take these values), Procedure A obtains a value of the parameter $G$ of order $10^3$. Thus for a reasonable level of accuracy, doing the computations implied by this size of $G$, one would generate about $20$ wavelet levels, which corresponds to about $2^{20}$ normal random variables. This amount is manageable in a standard single processor, but the amount could go out of hand in a standard computing environment if $G$ is of size, say 100. A potential way to mitigate this issue would be to simulate a properly scaled down version of the path and scale everything back once we have simulated the path, or, alternatively, to make this portion of the procedure run in parallel computing cores.
\item We have some freedom in picking the parameter $\alpha\in(1/3, 1/2)$ and $\beta \in (1-\alpha, 2\alpha)$, but there is a tradeoff. From Theorem 2.1,
  we want $2\alpha-\beta$ as close to $1/2$ as possible ($\alpha$ close to $1/2$ and $\beta$ close to $1-\alpha$). On the other hand, for the upper bound of $||Z||_{\alpha}$ and due to our procedure for finding $N_2$ (Section 5.2), we want $\alpha$ to be reasonably small and $\beta$ to be reasonably large. The point is, even if the theoretical complexity as $\epsilon$ decreases is driven by Theorem 2.1, we observed that in practice, given a fixed $\epsilon$, it might be better to choose $\alpha$ somewhat small, but within the range $(1/3,1/2)$.
\end{enumerate}

For our numerical experiments we simulated a 2 dimensional geometric Brownian Motion.
\begin{eqnarray*}
dX_1(t)&=&\mu_1X_1(t) dt + \sigma_1 X_1(t)dB_1(t)\\
dX_2(t) &=& \mu_2 X_2(t) dt +\rho \sigma_2 X_2(t) dB_1(t)+\sqrt{1-\rho^2}\sigma_2 X_2(t) dB_2(t)
\end{eqnarray*}
with initial value $X_1(0)$ and $X_2(0)$. We recognize that this SDE has a closed form solution, this is useful because we want to compare the output of our method and the output of an algorithm that does not take advantage of the Euler discretization. 
The previous SDE has the following closed form solution,
\begin{eqnarray*}
X_1(t)&=&X_1(0)\exp\left(\left(\mu_1-\sigma_1^2/2\right)t + \sigma_1 B_1(t)\right)\\
X_2(t) &=& X_2(0)\exp\left(\left(\mu_1-\sigma_2^2/2\right)t + \rho\sigma_2B_1(t)+ \sqrt{1-\rho^2}\sigma_2B_2(t)\right)
\end{eqnarray*}

Note that, the solution to this SDE is a continuous function of the Brownian motion under the uniform topology; so a Tolerance Enforced Simulation procedure using the closed form expression is much easier to design and, therefore, it can be used as a benchmark. Note that continuity of solution of the SDE under uniform norm does not imply that by only controlling the error of the wavelet approximation to Brownian motion in uniform metric, one can approximate to a given (deterministic) tolerance the error of the solution to the SDE \emph{when applying the Euler scheme}. In order to guarantee that the Euler scheme yields an error which is bounded by a user defined (deterministic) tolerance with probability one, one needs to apply our procedure.

Figure \ref{fig:plot} provides one numerical illustration of the performance of our algorithm. The light color is the path produced by our algorithm using the Euler scheme with a random truncation (which captures enough information to enforce a deterministic error in path space). The dark color is the simulation obtained by using a TES in uniform norm for the closed form expression. We observe that the two are indeed very close to each other. In particular, the  recursively constructed path is within $\epsilon$ ($\epsilon=0.1$) error bound of the true path. In fact, it appears that the constants are probably pessimistic in the sense that the actual error is much smaller that the prescribed guaranteed error. It might be worth to optimize the various tuning parameters in the algorithm, due to its complexity, however, we prefer to leave this task for future research.

%%In our simulation experiments, we actually scaled done the SDE by scaling the value of $X(0)$, and then scaled everything back once we have simulated the whole %%trajectory.

\begin{figure}
\vspace{6pc}
\caption[]{Simulation of the Geometric Brownian Motion on $[0,1]$ ($X_1(0)=X_2(0)=1, \mu_1=\mu_2=1, \sigma_1=\sigma_2=0.5, \rho=0.25$)} \label{fig:plot}
\centering
\includegraphics[width=0.8\textwidth]{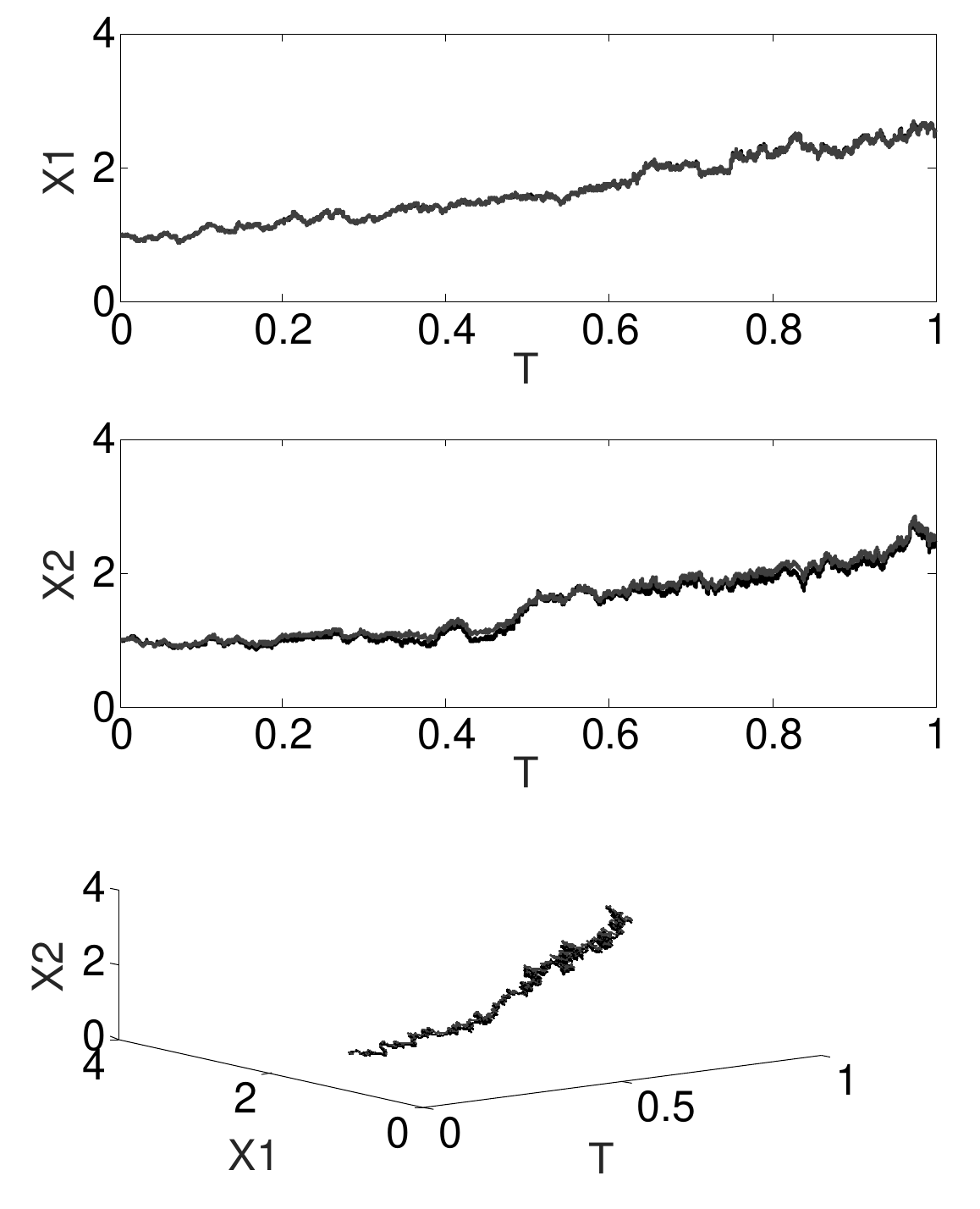}
\end{figure}

\section*{Acknowledgements}
The authors would like to thank the associate editor and two referees for their detailed and insightful comments
and suggestions. We also thank Yi Zhu for his help with the implementation of the algorithms. 
NSF support from grants CMMI 1069064 and DMS 1320550 is gratefully acknowledged.

\appendix

\section{Proofs of results in Section 3}\label{app:sec3}
We start by recalling the following
algebraic property of the L\'{e}vy areas: for each $0\leq r<s<t$
\begin{equation}
A_{i,j}\left( r,t\right) =A_{i,j}\left( r,s\right) +A_{i,j}\left( s,t\right)
+\left( Z_{i}\left( s\right) -Z_{i}\left( r\right) \right) \left(
Z_{j}\left( t\right) -Z_{j}\left( s\right) \right) .  \label{Eq-A-use}
\end{equation}
Using this property and a simple use of the Borel-Cantelli lemma we can
obtain the proof of Lemma \ref{Lem_Rep_Area}.

\begin{proof}
[Proof of Lemma \ref{Lem_Rep_Area}]
We use (\ref{Eq-A-use}) repeatedly. First,
note that%
\begin{align*}
A_{i,j}\left(  t_{k}^{n},t_{k+1}^{n}\right) = & A_{i,j}\left(  t_{2k}%
^{n+1},t_{2k+1}^{n+1}\right)  +A_{i,j}\left(  t_{2k+1}^{n+1},t_{2k+2}%
^{n+1}\right)\\
 &+\left(  Z_{i}\left(  t_{2k+1}^{n+1}\right)  -Z_{i}\left(
t_{2k}^{n+1}\right)  \right)  \left(  Z_{j}\left(  t_{2k+2}^{n+1}\right)
-Z_{j}\left(  t_{2k+1}^{n+1}\right)  \right)  .
\end{align*}
We continue, this time splitting $A_{i,j}\left(  t_{2k}^{n+1},t_{2k+1}%
^{n+1}\right)  $ and $A_{i,j}\left(  t_{2k+1}^{n+1},t_{2k+2}^{n+1}\right)  $,
thereby obtaining%
\begin{align*}
&  A_{i,j}\left(  t_{k}^{n},t_{k+1}^{n}\right) \\
=&\left(  Z_{i}\left(  t_{2k+1}^{n+1}\right)  -Z_{i}\left(  t_{2k}%
^{n+1}\right)  \right)  \left(  Z_{j}\left(  t_{2k+2}^{n+1}\right)
-Z_{j}\left(  t_{2k+1}^{n+1}\right)  \right) \\
&  +A_{i,j}\left(  t_{2^{2}k}^{n+2},t_{2^{2}k+1}^{n+2}\right)  +A_{i,j}\left(
t_{2^{2}k+1}^{n+2},t_{2^{2}k+2}^{n+2}\right) \\
&  +\left(  Z_{i}\left(  t_{2^{2}k+1}^{n+2}\right)  -Z_{i}\left(  t_{2^{2}%
k}^{n+2}\right)  \right)  \left(  Z_{j}\left(  t_{2^{2}k+2}^{n+2}\right)
-Z_{j}\left(  t_{2^{2}k+1}^{n+2}\right)  \right) \\
&  +A_{i,j}\left(  t_{2^{2}k+2}^{n+2},t_{2^{2}k+3}^{n+2}\right)
+A_{i,j}\left(  t_{2^{2}k+3}^{n+2},t_{2^{2}k+4}^{n+2}\right) \\
&  +\left(  Z_{i}\left(  t_{2^{2}k+3}^{n+2}\right)  -Z_{i}\left(  t_{2^{2}%
k+2}^{n+2}\right)  \right)  \left(  Z_{j}\left(  t_{2^{2}k+4}^{n+2}\right)
-Z_{j}\left(  t_{2^{2}k+3}^{n+2}\right)  \right)  .
\end{align*}
Suppose by iterating the previous splitting procedure $m$ times, we have
\begin{align}
&A_{i,j}\left(  t_{k}^{n},t_{k+1}^{n}\right) \label{Aux_A_Der}\\
=&\sum_{h=n+1}^{m}\sum
_{l=1}^{2^{h-n-1}}\left[Z_{i}\left(t_{2^{h-n}k+2l-1}^{h}\right)-Z_{i}\left(t_{2^{h-n}k+2l-2}^{h}\right)\right]
\left[Z_{j}\left(t_{2^{h-n}k+2l}^{h}\right)-Z_{j}\left(t_{2^{h-n}k+2l-1}^{h}\right)\right]\nonumber \\
& +\sum_{l=1}^{2^{m-n-1}}A_{i,j}\left(  t_{2^{m-n}k+2l-2}^{m},t_{2^{m-n}%
k+2l-1}^{m}\right)  +\sum_{l=1}^{2^{m-n-1}}A_{i,j}\left(  t_{2^{m-n}%
k+2l-1}^{m},t_{2^{m-n}k+2l}^{m}\right)  .\nonumber
\end{align}
Then for the $(m+1)$-th iteration, we have
\begin{align}
&A_{i,j}\left(  t_{k}^{n},t_{k+1}^{n}\right) \nonumber\\
=&\sum_{h=n+1}^{m}\sum_{l=1}^{2^{h-n-1}}\left[Z_{i}\left(t_{2^{h-n}k+2l-1}^{h}\right)-Z_{i}\left(t_{2^{h-n}k+2l-2}%
^{h}\right)\right]\left[Z_{j}\left(t_{2^{h-n}k+2l}^{h}\right)-Z_{j}\left(t_{2^{h-n}k+2l-1}^{h}\right)\right]\nonumber\\
&+\sum_{l=1}^{2^{m-n-1}}\{A_{i,j}\left(  t_{2(2^{m-n}k+2l-2)}^{m+1},t_{2(2^{m-n}k+2l-2)+1}^{m+1}\right) \nonumber\\
&+A_{i,j}\left(  t_{2(2^{m-n}k+2l-2)+1}^{m+1},t_{2(2^{m-n}k+2l-2)+2}^{m+1}\right) \nonumber \\
&+\left[Z_i\left( t_{2(2^{m-n}k+2l-2)+1}^{m+1}\right)-Z_i\left( t_{2(2^{m-n}k+2l-2)}^{m+1}\right)\right] \nonumber\\
&\times \left[Z_i\left( t_{2(2^{m-n}k+2l-2)+2}^{m+1}\right)-Z_i\left( t_{2(2^{m-n}k+2l-2)+1}^{m+1}\right)\right]\} \nonumber\\
&+\sum_{l=1}^{2^{m-n-1}}\{A_{i,j}\left(  t_{2(2^{m-n}k+2l-1)}^{m+1},t_{2(2^{m-n}k+2l-1)+1}^{m+1}\right) \nonumber\\
&+A_{i,j}\left(  t_{2(2^{m-n}k+2l-1)+1}^{m+1},t_{2(2^{m-n}k+2l-1)+2}^{m+1}\right) \nonumber \\
&+\left[Z_i\left( t_{2(2^{m-n}k+2l-1)+1}^{m+1}\right)-Z_i\left( t_{2(2^{m-n}k+2l-1)}^{m+1}\right)\right] \nonumber\\
&\times \left[Z_i\left( t_{2(2^{m-n}k+2l-1)+2}^{m+1}\right)-Z_i\left( t_{2(2^{m-n}k+2l-1)+1}^{m+1}\right)\right]\} \nonumber \\
=&\sum_{h=n+1}^{m+1}\sum_{l=1}^{2^{h-n-1}}\left[Z_{i}\left(t_{2^{h-n}k+2l-1}^{h}\right)-Z_{i}\left(t_{2^{h-n}k+2l-2}%
^{h}\right)\right]\left[Z_{j}\left(t_{2^{h-n}k+2l}^{h}\right)-Z_{j}\left(t_{2^{h-n}k+2l-1}^{h}\right)\right]\nonumber \\
& +\sum_{l=1}^{2^{m-n}}A_{i,j}\left(  t_{2^{m+1-n}k+2l-2}^{m+1},t_{2^{m+1-n}k+2l-1}^{m+1}\right)
+\sum_{l=1}^{2^{m+1-n-1}}A_{i,j}\left(  t_{2^{m+1-n}k+2l-1}^{m+1},t_{2^{m+1-n}k+2l}^{m+1}\right).\nonumber
\end{align}
Thus, \eqref{Aux_A_Der} holds by induction.

We next claim that
\begin{equation}
\sum_{l=1}^{2^{m-n-1}}A_{i,j}\left(  t_{2^{m-n}k+2l-2}^{m},t_{2^{m-n}%
k+2l-1}^{m}\right)  +\sum_{l=1}^{2^{m-n-1}}A_{i,j}\left(  t_{2^{m-n}%
k+2l-1}^{m},t_{2^{m-n}k+2l}^{m}\right)  \rightarrow0 \label{Lim_A}%
\end{equation}
almost surely as $m\rightarrow\infty$. To see this note that
\begin{align*}
&  P\left(  \left\vert \sum_{l=1}^{2^{m-n-1}}A_{i,j}\left(  t_{2^{h-n}%
k+2l-2}^{h},t_{2^{h-n}k+2l-1}^{h}\right)  \right\vert >1/m\right) \\
&  \leq m^{2}\sum_{l=1}^{2^{m-n-1}}E\left[A_{i,j}^{2}\left(  t_{2^{m-n}k+2l-2}%
^{m},t_{2^{m-n}k+2l-1}^{m}\right) \right]=m^{2}2^{m-n+1}E\int_{0}^{\Delta_{m}}%
Z_{i}^{2}\left(  s\right)  ds\\
&  =m^{2}2^{m-n}\Delta_{m}^{2}=2^{-n}m^{2}\Delta_{m}.
\end{align*}
Since $\sum_{m=1}^{\infty}m^{2}\Delta_{m}<\infty$, we conclude by
Borel-Cantelli's lemma that, almost surely, for $m$ large enough
\[
\left\vert \sum_{l=1}^{2^{m-n-1}}A_{i,j}\left(  t_{2^{h-n}k+2l-2}%
^{h},t_{2^{h-n}k+2l-1}^{h}\right)  \right\vert <1/m
\]
Thus, we have \eqref{Lim_A} holds almost surely and therefore, from
\eqref{Aux_A_Der}, by sending $m\rightarrow\infty$ we obtain the conclusion of
the lemma.
\end{proof}

\begin{proof}[Proof of Lemma \ref{lm:N1_finite_mean}]
Let $N_{i,1}=\max\{n\geq1:|W_{i,k}^{n}|>4\sqrt{n+1}\mbox{ for some }1\leq k\leq2^{n-1}\}$. Then $N_1=\max_i\{N_{i,1}\}$.
\begin{eqnarray*}
E[N_{i,1}]&=&\sum_{n=1}^{\infty}P(N_{i,1}\geq n)\\
&\leq& \sum_{n=1}^{\infty}\sum_{m=n}^{\infty}\sum_{k=1}^{2^{m-1}}P(|W_{i,k}^m|>4\sqrt{m+1})\\
&\leq& \sum_{n=1}^{\infty}\sum_{m=n}^{\infty} 2^{m-1}\exp(-8m)\\
&\leq& \sum_{n=1}^{\infty}\frac{\exp(-(8-\log2)n)}{1-\exp(-(8-\log2))}\\
&=&\frac{\exp(-(8-\log2))}{(1-\exp(-(8-\log2)))^2}<\infty
\end{eqnarray*}
Thus $E[N]<\infty$. We also notice that $E[N_{1}]$ is independent of our choice of $\alpha$ and $\beta$.
\end{proof}

\begin{proof}[Proof of Lemma \ref{Lem_HN_alpha}]
For any interval $[t,t+\delta]\subset\lbrack0,1]$, there exists $m\in\mathbb{Z}^+$, such that $2^{-(m+1)}\leq
\delta\leq2^{-m}$. We next divide the analysis into two cases.\\
\begin{itemize}
\item[Case1.] There exist two level $m$ dyadic points $t_{k}^{m}$ and $t_{k+1}^{m}$,
such that $[t,t+\delta]\subset[t_{k}^{m},t_{k+1}^{m}]$.
\item[Case 2.] There exist three level $m$ dyadic points $t_{k}^{m}$, $t_{k+1}^{m}$ and $t_{k+1}^m$,
such that $t\in[t_{k}^{m},t_{k+1}^{m}]$ and $t+\delta\in [t_{1}^{m},t_{k+2}^{m}]$.
\end{itemize}
In Case 1, using the L\'{e}vy-Ciesielski construction, we have
\[
|Z(t+\delta)-Z(t)|\leq2^{-m}V^0+\sum_{n=1}^{m}2^{-m+\frac{n-1}{2}}V^{n}+\sum
_{n=m+1}^{\infty}2^{-\frac{n+1}{2}}V^{n}.
\]
Since $\delta\geq 2^{-(m+1)}$, we have
\begin{align*}
&\frac{|Z(t+\delta)-Z(t)|}{\delta^{\alpha}}  \\
\leq& 2^{2\alpha}\left(2^{-(1-\alpha)m-\alpha}V^0+\sum_{n=1}%
^{m}2^{-(1-\alpha)m-\alpha+\frac{n-1}{2}}V^{n}+\sum_{n=m+1}^{\infty}2^{-\frac{n+1}%
{2}+\alpha(m+1)}V^{n}\right)\\
\leq & 2^{2\alpha}\left(\sum_{n=0}^{m}2^{-(1-\alpha)n+\frac{n}{2}}V^{n}%
+\sum_{n=m+1}^{\infty}2^{-\frac{n}{2}+\alpha n}V^{n}\right)\\
\leq & 2^{2\alpha}\sum_{n=0}^{\infty}2^{-n(\frac{1}{2}-\alpha)}V^{n}.
\end{align*}
Similar to Case 1, in Case 2, we have
\begin{eqnarray*}
|Z(t+\delta)-Z(t)| &\leq& |Z(t_{k+1}^m)-Z(t)|+|Z(t+\delta)-Z(t_{k+1}^m)|\\
&\leq& 2\left(V^0+\sum_{n=1}^{m}2^{-m+\frac{n-1}{2}}V^{n}+\sum
_{n=m+1}^{\infty}2^{-\frac{n+1}{2}}V^{n}\right).
\end{eqnarray*}
Then
\[
\frac{|Z(t+\delta)-Z(t)|}{\delta^{\alpha}} \leq 2^{2\alpha+1}\sum_{n=0}^{\infty}2^{-n(\frac{1}{2}-\alpha)}V^{n}.
\]
As the interval $[t,t+\delta]$ is arbitrarily chosen, we obtain the result.
\end{proof}

\begin{proof}
[Proof of Lemma \ref{Lem_L_Abs_Sum}]
For $i\neq j$, let $N_{i,j,2}=\max\{n: |L_{i,j}^{n}(m)-L_{i,j}^{n}(l)|>(m-l)^{\beta}\Delta
_{n}^{2\alpha}\text{ for some }0\leq l<m\leq2^{n-1}\}$. Then $N_2=\max_{1\leq i,j\leq d^{\prime}, i\neq j}\{N_{i,j,2}\}$.\\

Fix any $(i,j)$ pair, Define
\[
\mathcal{C}_{n}=\{|L_{i,j}^{n}(m)-L_{i,j}^{n}(l)|>(m-l)^{\beta}\Delta
_{n}^{2\alpha}\text{ for some }0\leq l<m\leq2^{n-1}\}.
\]
We will show that the events $\{\mathcal{C}_{n}:n\geq0\}$ occur finitely many
times. Note that%
\begin{equation}
P\left(  \mathcal{C}_{n}\right)  \leq\sum_{0\leq l<m\leq2^{n-1}}2P\left(  \left(
L_{i,j}^{n}(m)-L_{i,j}^{n}(l)\right)  >(m-l)^{\beta}\Delta_{n}^{2\alpha
}\right)  . \label{Sum_PC}%
\end{equation}
Also observe that for fixed $m$ and $n$, $L_{i,j}^{n}\left(  m\right)  $ is
the sum of $m$ i.i.d. random variables, each of which is distributed as
$(Z_{i}(t_{1}^{n})-Z_{i}(t_{0}^{n}))(Z_{j}(t_{2}^{n})-Z_{j}(t_{1}^{n}))$ and
\[
E\left[\exp\left(  \theta(Z_{i}(t_{1}^{n})-Z_{i}(t_{0}^{n}))(Z_{j}(t_{2}^{n}%
)-Z_{j}(t_{1}^{n}))\right) \right] =\left(  1-\theta^{2}\Delta_{n}^{2}\right)
^{-1/2}.
\]
We apply Chernoff's bound and have%
\begin{align*}
&P\left(  \left(  L_{i,j}^{n}(m)-L_{i,j}^{n}(l)\right)  >(m-l)^{\beta}%
\Delta_{n}^{2\alpha}\right)\\
\leq& \exp\left(  -\theta\left(  m-l\right)
^{\beta}\Delta_{n}^{2\alpha}-\frac{1}{2}(m-l)\log\left(  1-\theta^{2}%
\Delta_{n}^{2}\right)  \right)  .
\end{align*}
Select $\theta=\theta^{\prime}\left(  m-l\right)  ^{-1/2}\Delta_{n}^{-1}$ for
$\theta^{\prime}\in\left(  0,1/4\right)  $%
\[
P\left(  \left(  L_{i,j}^{n}(m)-L_{i,j}^{n}(l)\right)  >(m-l)^{\beta}%
\Delta_{n}^{2\alpha}\right)  \leq\exp\left(  -\theta^{\prime}\left(
m-l\right)  ^{\beta-1/2}\Delta_{n}^{2\alpha-1}+1\right)  .
\]
Hence,
\begin{equation} \label{eq:upper}
P\left(  \mathcal{C}_{n}\right)  \leq\sum_{0\leq l<m\leq2^{n-1}}2\exp\left(
-\theta^{\prime}\left(  m-l\right)  ^{\beta-1/2}\Delta_{n}^{2\alpha
-1}+1\right)  \leq2^{2n}\exp\left(  -\theta^{\prime}2^{n\left(  1-2\alpha
\right)  }\right)  .
\end{equation}
We notice that $2\alpha<1$.
$$E[N_{i,j,2}]=\sum_{n=1}^{\infty} P(N_{i,j,2}\geq n)$$
and
$$P(N_{i,j,2} \geq n)\leq\sum_{m=n}^{\infty} P(C_m)$$
From \eqref{eq:upper}, we denote
$$M:=\min\left\{n: 2^{2n}\exp\left(  -\theta^{\prime}2^{n\left(1-2\alpha\right)}\right) < 1/4\right\}.$$
Then $M=o\left((1-2\alpha)^{-2}\right)$. We also notice that for $n \geq M$,
$$2^{2(n+1)}\exp\left(  -\theta^{\prime}2^{(n+1)\left(1-2\alpha\right)}\right) < \left(2^{2n}\exp\left(  -\theta^{\prime}2^{n\left(1-2\alpha\right)}\right)\right)^2$$
Thus, $\sum_{m=M+k}^{\infty} P(\mathcal{C}_m) \leq (1/2)^k$ and
$$E[N_{i,j,2}] \leq M-1+\sum_{n=M}^{\infty}\sum_{m=n}^{\infty}P(\mathcal{C}_m)\leq M$$
Thus, $E[N_2]=o\left((1-2\alpha)^{-2}\right)$.
%Since $2\alpha<1$ we clearly have that
%\[
%\sum_{n=1}^{\infty}P\left(  \mathcal{C}_{n}\right)  <\infty
%\]
%and by Borel-Cantelli's lemma we conclude that $P(\mathcal{C}_{n}$ infinitely
%often$)=0$ which in turns yields the existence of such $N_{2}$.
 \end{proof}

\bigskip

The proof of Corollary \ref{Prop_Rep_R} follows directly from Lemma \ref%
{Lem_Rep_Area} and Lemma \ref{Lem_L_Abs_Sum}.

\begin{proof}
[Proof of Corollary \ref{Prop_Rep_R}]Using Lemma \ref{Lem_Rep_Area} we obtain
that%
\begin{equation}
R_{i,j}^{n}(t_{l}^{n},t_{m}^{n})=\sum_{k=l+1}^{m}\sum_{h=n+1}^{\infty}%
(L_{i,j}^{h}(2^{h-n}(k+1))-L_{i,j}^{h}(2^{h-n}k)). \label{Aux_R_1}%
\end{equation}
On the other hand, due to Lemma \ref{Lem_L_Abs_Sum} if $n\geq N_{2}$%
\begin{align*}
&\sum_{k=l+1}^{m}\sum_{h=n+1}^{\infty}|L_{i,j}^{h}(2^{h-n}(k+1))-L_{i,j}%
^{h}(2^{h-n}k)|\\
\leq&\sum_{k=l+1}^{m}\sum_{h=n+1}^{\infty}(2^{-n}(k+1)-2^{-n}%
k)^{\beta}\Delta_{h}^{2\alpha-\beta}<\infty
\end{align*}
because $\beta<2\alpha$. Thus (by Fubini's theorem) the order of the
summations in (\ref{Aux_R_1}) can be exchanged and we obtain the result.
\end{proof}

\begin{proof}
[Proof of Lemma \ref{Lem_Sum_Bounds}]We start by showing the bound on
$\Gamma_{R}$. By the definition of $\Gamma_{L}$, for any $n$%
\[
|L_{i,j}^{n}(m)-L_{i,j}^{n}(l)|\leq\Gamma_{L} (m-l)^{\beta}\Delta_{n}%
^{2\alpha}.
\]
Consequently, for any $0 \leq l < m \leq2^{n-1}$,
\begin{align*}
|R_{i,j}^{n}(t_{l}^{n},t_{m}^{n})|  &  \leq\sum_{h=n+1}^{\infty}\left\vert
L_{i,j}^{h}(2^{h-n}m)-L_{i,j}^{h}(2^{h-n}l)\right\vert \\
&  \leq\sum_{h=n+1}^{\infty}\Gamma_{L}(m-l)^{\beta}2^{(h-n)\beta}\Delta
_{h}^{2\alpha}=\Gamma_{L}(m-l)^{\beta}\Delta_{n}^{\beta}\sum_{h=n+1}^{\infty
}\Delta_{h}^{2\alpha-\beta}\\
&  =\Gamma_{L}(t_{m}^{n}-t_{l}^{n})^{\beta}\Delta_{n}^{2\alpha-\beta}%
\frac{2^{-(2\alpha-\beta)}}{1-2^{-(2\alpha-\beta)}}.
\end{align*}
Therefore, we conclude that
\begin{align*}
\Gamma_{R}  &  :=\max_{1\leq i,j\leq d^{\prime}}\sup_{n\geq0}\sup_{0\leq
s<t\leq1,s,t\in D_{n}}\frac{|R_{i,j}^{n}(s,t)|}{|t-s|^{\beta}\Delta
_{n}^{2\alpha-\beta}}\\
&  \leq\Gamma_{L}\frac{2^{-(2\alpha-\beta)}}{1-2^{-(2\alpha-\beta)}}.
\end{align*}

Let $r(n,l,m)=\min\{h: |t_{m}^{n}-t_{l}^{n}|\geq\Delta_{h}\}$. For simplicity of
notation, we define the following sequence of operators of time:
\[
\underline{s}^{h}(t_{l}^{n})=\min\{t_{k}^{h}: t_{k}^{h}\geq t_{l}^{n}\}
\]
\[
\bar{s}^{h}(t_{m}^{n})=\max\{t_{k}^{h}: t_{k}^{h} \leq t_{m}^{n}\}
\]
for $r(n,l,m) \leq h \leq n$.\newline Then
\begin{align*}
&  |A_{i,j}(t_{l}^{n}, t_{m}^{n})|\\
&  \leq|A_{i,j}(t_{l}^{n}, \underline{s}^{n-1}(t_{l}^{n}))|+|A_{i,j}%
(\underline{s}^{n-1}(t_{l}^{n}),\bar{s}^{n-1}(t_{m}^{n}))|+|A_{i,j}(\bar
{s}^{n-1}(t_{m}^{n}),t_{m}^{n})|\\
&  +|Z_{i}(\underline{s}^{n-1}(t_{l}^{n}))-Z_{i}(t_{l}^{n})||Z_{j}(\bar
{s}^{n-1}(t_{m}^{n}))-Z_{j}(\underline{s}^{n-1}(t_{l}^{n}))|\\
&  +|Z_{i}(\bar{s}^{n-1}(t_{m}^{n}))-Z_{i}(t_{l}^{n})||Z_{j}(t_{m}^{n}%
)-Z_{j}(\bar{s}^{n-1}(t_{m}^{n}))|
\end{align*}
Suppose by iterating the above procedure up to level  $\gamma$, where $r(n,l,m)<\gamma<n$, we have
\begin{eqnarray*}
&&|A_{i,j}(t_{l}^{n}, t_{m}^{n})|\\
&\leq& \sum_{h=\gamma+1}^{n} |A_{i,j}(\underline{s}^{h}(t_{l}^{n}), \underline {s}^{h-1}(t_{l}^{n}))|+|A_{i,j}(\underline{s}^{\gamma}(t_{l}^{n}),\bar{s}^{\gamma}(t_{m}^{n}))| \\
&& +\sum_{h=\gamma+1}^{n} |A_{i,j}(\bar{s}^{h}(t_{m}^{n}), \bar{s}^{h-1}(t_{m}^{n}))| \nonumber\\
&&+ \sum_{h=\gamma+1}^{n} |Z_{i}(\underline{s}^{h}(t_{l}^{n}))-Z_{i}(\underline{s}^{h-1}(t_{l}^{n}))||Z_{j}(\bar{s}^{h-1}(t_{m}^{n}))-Z_{j}(\underline{s}^{h-1}(t_{l}^{n}))| \\
&&  +\sum_{h=\gamma+1}^{n}|Z_{i}(\bar{s}^{h-1}(t_{m}^{n}))-Z_{i}(\underline{s}^{h}(t_{l}^{n}))||Z_{j}(\bar{s}^{h}(t_{m}^{n}))-Z_{j}(\bar{s}^{h-1}(t_{m}^{n}))|
\end{eqnarray*}
Then for level $\gamma-1$, as $\underline{s}^{h-1}(\underline{s}^h(t_l^n))=\underline{s}^{h-1}(t_l^n)$ and $\bar{s}^{h-1}(\bar{s}^h(t_m^n))=\bar{s}^{h-1}(t_m^n)$ for $h<n$, we have
\begin{eqnarray*}
&&|A_{i,j}(t_{l}^{n}, t_{m}^{n})|\\
&\leq& \sum_{h=\gamma+1}^{n} |A_{i,j}(\underline{s}^{h}(t_{l}^{n}), \underline {s}^{h-1}(t_{l}^{n}))|\\
&&+|A_{i,j}(\underline{s}^{\gamma}(t_{l}^{n}),\underline{s}^{\gamma-1}(\underline{s}^{\gamma}(t_{l}^{n})))|\\
&& + |A_{i,j}(\underline{s}^{\gamma-1}(\underline{s}^{\gamma}(t_{l}^{n})),\bar{s}^{\gamma-1}(\bar{s}^{\gamma}(t_{m}^{n})))|+|A_{i,j}(\bar{s}^{\gamma-1}(\bar{s}^{\gamma}(t_{m}^{n})),\bar{s}^{\gamma}(t_{m}^{n}))| \\
&& +|Z_{i}(\underline{s}^{\gamma-1}\underline{s}^{\gamma}((t_{l}^{n})))-Z_{i}(\underline{s}^{\gamma}(t_{l}^{n}))||Z_{j}(\bar{s}^{\gamma-1}(\bar{s}^{\gamma}((t_{m}^{n})))-Z_{j}(\underline{s}^{\gamma-1}(\underline{s}^{\gamma}(t_{l}^{n})))|\\
&&  +|Z_{i}(\bar{s}^{\gamma-1}(\bar{s}^{\gamma}(t_{m}^{n})))-Z_{i}(\underline{s}^{\gamma}(t_{l}^{n}))||Z_{j}(\bar{s}^{\gamma}(t_{m}^{n}))-Z_{j}(\bar{s}^{\gamma-1}(\bar{s}^{\gamma}(t_{m}^{n})))|\\
&& +\sum_{h=r(n,l,m)+1}^{n} |A_{i,j}(\bar{s}^{h}(t_{m}^{n}), \bar{s}^{h-1}(t_{m}^{n}))|\\
&=& \sum_{h=\gamma}^{n} |A_{i,j}(\underline{s}^{h}(t_{l}^{n}), \underline {s}^{h-1}(t_{l}^{n}))|+|A_{i,j}(\underline{s}^{\gamma-1}(t_{l}^{n}),\bar{s}^{\gamma-1}(t_{m}^{n}))| \\
&& +\sum_{h=\gamma}^{n} |A_{i,j}(\bar{s}^{h}(t_{m}^{n}), \bar{s}^{h-1}(t_{m}^{n}))| \nonumber\\
&&+ \sum_{h=\gamma}^{n} |Z_{i}(\underline{s}^{h-1}(t_{l}^{n}))-Z_{i}(\underline{s}^{h}(t_{l}^{n}))||Z_{j}(\bar{s}^{h-1}(t_{m}^{n}))-Z_{j}(\underline{s}^{h-1}(t_{l}^{n}))| \\
&&  +\sum_{h=\gamma}^{n}|Z_{i}(\bar{s}^{h-1}(t_{m}^{n}))-Z_{i}(\underline{s}^{h}(t_{l}^{n}))||Z_{j}(\bar{s}^{h}(t_{m}^{n}))-Z_{j}(\bar{s}^{h-1}(t_{m}^{n}))|
\end{eqnarray*}
Thus, the following inequality holds by induction.
\begin{align*}
&  |A_{i,j}(t_{l}^{n}, t_{m}^{n})|\\
\leq& \sum_{h=r(n,l,m)+1}^{n} |A_{i,j}(\underline{s}^{h}(t_{l}^{n}), \underline
{s}^{h-1}(t_{l}^{n}))|+|A_{i,j}(\underline{s}^{r(n,l,m)}(t_{l}^{n}),\bar{s}^{r(n,l,m)}(t_{m}^{n}))|\\
& +\sum_{h=r(n,l,m)+1}^{n} |A_{i,j}(\bar{s}^{h}(t_{m}^{n}), \bar{s}^{h-1}(t_{m}^{n}))|\\
&  + \sum_{h=r(n,l,m)+1}^{n} |Z_{i}(\underline{s}^{h-1}(t_{l}^{n}))-Z_{i}(\underline
{s}^{h}(t_{l}^{n}))||Z_{j}(\bar{s}^{h-1}(t_{m}^{n}))-Z_{j}(\underline
{s}^{h-1}(t_{l}^{n}))|\\
&  +\sum_{h=r(n,l,m)+1}^{n}|Z_{i}(\bar{s}^{h-1}(t_{m}^{n}))-Z_{i}(\underline{s}%
^{h}(t_{l}^{n}))||Z_{j}(\bar{s}^{h}(t_{m}^{n}))-Z_{j}(\bar{s}^{h-1}(t_{m}%
^{n}))|
\end{align*}

We make the following important observations,
\begin{align*}
\underline{s}^{h-1}(t_{l}^{n})-\underline{s}^{h}(t_{l}^{n})  &  =
\begin{cases}
0 & \text{if } \underline{s}^{h-1}(t_{l}^{n})=\underline{s}^{h}(t_{l}^{n})\\
\Delta_{h} & \text{otherwise}%
\end{cases}
\end{align*}
\begin{align*}
\bar{s}^{h}(t_{m}^{n})-\bar{s}^{h-1}(t_{m}^{n})  &  =
\begin{cases}
0 & \text{if } {s}^{h-1}(t_{m}^{n})=\bar{s}^{h}(t_{m}^{n})\\
\Delta_{h} & \text{otherwise}%
\end{cases}
\end{align*}
\begin{align*}
\bar{s}^{r(n,l,m)}(t_{m}^{n})-\underline{s}^{r(n,l,m)}(t_{l}^{n})  &  =
\begin{cases}
0 & \text{if } \underline{s}^{r(n,l,m)}(t_{l}^{n})=\bar{s}^{r(n,l,m)}(t_{m}^{n})\\
\Delta_{r(n,l,m)} & \text{otherwise.}%
\end{cases}
\end{align*}
Then
\begin{align*}
&  \frac{|A_{i,j}(t_{l}^{n}, t_{m}^{n})|}{(t_{m}^{n}-t_{l}^{n})^{2\alpha}}\\
\leq& \sum_{h=r+1}^{n} \Gamma_{R} \frac{\Delta_{h}^{2\alpha}}{\Delta
_{r(n,l,m)}^{2\alpha}} + \Gamma_{R} + \sum_{h=r+1}^{n} \Gamma_{R} \frac{\Delta
_{h}^{2\alpha}}{\Delta_{r(n,l,m)}^{2\alpha}} + \sum_{h=r+1}^{n}||Z||_{\alpha}%
^{2}\frac{\Delta_{h}^{\alpha}}{\Delta_{r(n,l,m)}^{\alpha}}+\sum_{h=r+1}%
^{n}||Z||_{\alpha}^{2}\frac{\Delta_{h}^{\alpha}}{\Delta_{r}^{\alpha}}\\
\leq& \Gamma_{R} \frac{2}{1-2^{-2\alpha}} + ||Z||_{\alpha}^{2}\frac
{2^{1-\alpha}}{1-2^{-\alpha}}.
\end{align*}
Therefore,
\begin{align*}
||A||_{2\alpha}  &  := \max_{1\leq i\leq j\leq d^{\prime}}\sup_{n\geq1}%
\sup_{0\leq s<t\leq1;s,t\in D_{n}}\frac{\left\vert A_{i,j}\left(  s\right)
\right\vert }{\left\vert t-s\right\vert ^{2\alpha}}\\
&  \leq\Gamma_{R} \frac{2}{1-2^{-2\alpha}} + ||Z||_{\alpha}^{2}\frac
{2^{1-\alpha}}{1-2^{-\alpha}}.
\end{align*}
\end{proof}

\section{Proofs of results in Section 5}
\subsubsection{Proof of results in Section \ref{Sub_CMGF_Tilting}}

\begin{proof}[Proofs of Lemma \ref{lm:lambda}]
We first notice that $t_{2k}^n=t_{k}^{n-1}$ for $k=0,1,2,\dots, 2^{n-1}$. From the L\'{e}vy-Ciesielski Construction, we have
$$Z_i(t_{2k-1}^n)= \frac{1}{2}(Z_i(t_{k-1}^{n-1})+Z_i(t_{k}^{n-1}))+\Delta_{n+1}^{1/2}W_{i,k}^n$$
Then
$$\Lambda_{i}^{n}(t_{2k-1}^{n})=Z_i(t_{2k-1}^n)-Z_i(t_{k-1}^{n-1})=\frac{1}{2}(Z_i(t_{k-1}^{n-1})-Z_i(t_{k}^{n-1}))+\Delta_{n+1}^{1/2}W_{i,k}^n$$
and
$$\Lambda_{i}^{n}(t_{2k}^{n})=Z_i(t_{k}^{n-1})-Z_i(t_{2k-1}^{n})=\frac{1}{2}(Z_i(t_{k-1}^{n-1})-Z_i(t_{k}^{n-1}))-\Delta_{n+1}^{1/2}W_{i,k}^n.$$
\end{proof}

Before we prove Corollary \ref{Cor_E_nm1}, we first provide the following auxiliary result which summarizes basic
computations of moment generating functions of quadratic forms of bivariate
Gaussian random variables.

\begin{lemma}
\label{Lem_Gen_Tilting_Gaussian}Suppose that $Y$ and $Z$ are i.i.d. $N\left(
0,1\right) $ random variables, then for any numbers $%
a_{1},a_{2},b,c_{1},c_{2}\in R$ define%
\begin{equation*}
\phi\left( a,b,c\right) :=E\exp\left(
a_{1}Y+a_{2}Z+bYZ+c_{1}Y^{2}+c_{2}Z^{2}\right) ,
\end{equation*}
then we have that if $\left\vert 2c_{i}\right\vert <1$ for $i=1,2$, and$\
\left\vert b\right\vert <\left( 1-2c_{1}\right) \left( 1-2c_{2}\right) $
\begin{align*}
\phi(a,b,c) =& \left( 1-2c_{1}\right) ^{-1/2}\left( 1-2c_{2}\right)
^{-1/2}\left( 1-(b\left( 1-2c_{1}\right) ^{-1/2}\left( 1-2c_{2}\right)
^{-1/2})^{2}\right) ^{-1/2} \\
& \times\exp\left( \frac{%
a_{1}^{2}(1-2c_{1})^{-1}+a_{2}^{2}(1-2c_{2})^{-1}+2a_{1}a_{2}b(1-2c_{1})^{-1}(1-2c_{2})^{-1}%
}{2(1-b^{2}(1-2c_{1})^{-1}(1-2c_{2})^{-1})}\right)
\end{align*}
Moreover, if we let
\begin{equation*}
P^{\prime}\left( Y\in dy,Z\in dz\right) =P\left( Y\in dy,Z\in dz\right)
\frac{\exp\left( a_{1}y+a_{2}z+byz+c_{1}y^{2}+c_{2}z^{2}\right) }{\phi\left(
\theta;a,b,c\right) },
\end{equation*}
then under $P^{\prime}\left( \cdot\right) $ we have that $\left( Y,Z\right) $
are distributed bivariate Gaussian with covariance matrix%
\begin{align*}
& \Sigma\left( a,b,c\right) \\
=& \frac{1}{1-b^{2}\left( 1-2c_{1}\right) ^{-1}\left( 1-2c_{2}\right)^{-1}}
\\
&\times\left(
\begin{array}{cc}
(1-2c_{1})^{-1} & b\left( 1-2c_{1}\right) ^{-1}\left( 1-2c_{2}\right) ^{-1}
\\
b\left( 1-2c_{1}\right) ^{-1}\left( 1-2c_{2}\right) ^{-1} & (1-2c_{2})^{-1}%
\end{array}
\right) ,
\end{align*}
and mean vector
\begin{equation*}
\mu\left( a,b,c\right) =\Sigma\left( a,b,c\right) \left(
\begin{array}{c}
a_{1} \\
a_{2}%
\end{array}
\right) .
\end{equation*}
\end{lemma}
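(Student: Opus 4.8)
The plan is to recognize $\phi(a,b,c)$ as a two–dimensional Gaussian integral and to evaluate it by completing the square. Writing $v=(Y,Z)^{\top}$ and collecting terms, the exponent inside the expectation equals $a^{\top}v+\tfrac12 v^{\top}Mv$ with $a=(a_1,a_2)^{\top}$ and
\[
M=\begin{pmatrix} 2c_1 & b \\ b & 2c_2 \end{pmatrix}.
\]
Since under $P$ the joint density of $(Y,Z)$ is $(2\pi)^{-1}\exp(-\tfrac12 v^{\top}v)$, this yields
\[
\phi(a,b,c)=\frac{1}{2\pi}\int_{R^{2}}\exp\Bigl(-\tfrac12 v^{\top}(I-M)v+a^{\top}v\Bigr)\,dv .
\]
First I would check that the hypotheses $|2c_i|<1$ and $|b|<(1-2c_1)(1-2c_2)$ force $S:=I-M$ to be positive definite: its leading principal minor is $1-2c_1>0$, and $\det S=(1-2c_1)(1-2c_2)-b^{2}>0$, so the integral converges absolutely.

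Next, completing the square via $-\tfrac12 v^{\top}Sv+a^{\top}v=-\tfrac12(v-S^{-1}a)^{\top}S(v-S^{-1}a)+\tfrac12 a^{\top}S^{-1}a$ together with the normalization $\int_{R^{2}}\exp(-\tfrac12 w^{\top}Sw)\,dw=2\pi/\sqrt{\det S}$ gives
\[
\phi(a,b,c)=\frac{1}{\sqrt{\det S}}\,\exp\Bigl(\tfrac12 a^{\top}S^{-1}a\Bigr).
\]
It then remains to substitute the explicit $2\times2$ expressions $\det S=(1-2c_1)(1-2c_2)-b^{2}$ and $S^{-1}=(\det S)^{-1}\bigl(\begin{smallmatrix} 1-2c_2 & b \\ b & 1-2c_1\end{smallmatrix}\bigr)$, and to divide numerator and denominator by $(1-2c_1)(1-2c_2)$ in both the prefactor and the exponent. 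Factoring $\det S=(1-2c_1)(1-2c_2)\bigl(1-(b(1-2c_1)^{-1/2}(1-2c_2)^{-1/2})^{2}\bigr)$ reproduces the claimed prefactor, and the quadratic form $a^{\top}S^{-1}a$ rearranges immediately into the stated fraction; this is a routine algebraic step with no subtlety beyond tracking signs.

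For the assertion about $P'$, I observe that the tilted law has density proportional to $\exp(-\tfrac12 v^{\top}Sv+a^{\top}v)$, hence, by the same completion of the square, proportional to $\exp(-\tfrac12(v-S^{-1}a)^{\top}S(v-S^{-1}a))$. Therefore under $P'$ the pair $(Y,Z)$ is bivariate Gaussian with covariance $S^{-1}$ and mean $S^{-1}a$; reading off $S^{-1}$ from the inverse computed above identifies it with $\Sigma(a,b,c)$, and $\mu(a,b,c)=\Sigma(a,b,c)a=S^{-1}a$ is exactly the mean, which completes the proof. I do not expect any genuine obstacle: the entire argument reduces to the Gaussian integral identity and elementary $2\times2$ linear algebra, and the only place demanding care is the sign bookkeeping in $S=I-M$ and in its inverse, together with confirming the positive–definiteness condition that legitimizes the manipulations.
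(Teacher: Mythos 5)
Your proof is correct, and it takes a genuinely more direct route than the paper's. You evaluate $\phi(a,b,c)$ in one shot as a two-dimensional Gaussian integral, completing the square in $-\tfrac12 v^{\top}Sv+a^{\top}v$ with $S=I-M$, and then read off both the normalizing constant $(\det S)^{-1/2}\exp(\tfrac12 a^{\top}S^{-1}a)$ and the law of the tilted measure (Gaussian with covariance $S^{-1}$ and mean $S^{-1}a$) from the same computation; the algebraic identification of $S^{-1}$ with $\Sigma(a,b,c)$ and of the exponent with the stated fraction checks out. The paper instead builds $\phi$ by a chain of three successive exponential tiltings: first by the diagonal quadratic terms $c_1Y^2+c_2Z^2$ (producing independent Gaussians with rescaled variances), then by the cross term $bYZ$ (producing a correlated bivariate Gaussian and the factor $(1-\chi^2)^{-1/2}$ with $\chi=b(1-2c_1)^{-1/2}(1-2c_2)^{-1/2}$), and finally by the linear terms via the bivariate normal MGF. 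The sequential-tilting route is longer but mirrors how the lemma is deployed downstream (the intermediate measures $P_1,P_2$ are the objects actually simulated); your single completion of the square is cleaner and yields the mean and covariance under $P'$ with no extra work. One small caveat applies equally to both arguments: the stated hypothesis $|b|<(1-2c_1)(1-2c_2)$ guarantees $b^2<(1-2c_1)(1-2c_2)$, i.e.\ $\det S>0$, only when $(1-2c_1)(1-2c_2)\leq 1$; in general the condition one actually needs (and which the paper's proof also tacitly invokes as $|\chi|<1$) is $b^2<(1-2c_1)(1-2c_2)$, so you should state that you are using this slightly stronger, and evidently intended, hypothesis rather than asserting that positive definiteness is "forced" by the condition as written.
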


\begin{proof}
First it follows easily that $E\exp\left(  c_{1}Y^{2}+c_{2}Z^{2}\right)
=(1-2c_{1})^{-1/2}(1-2c_{2})^{-1/2}$, and under the probability measure%
\[
P_{1}\left(  Y\in dy.Z\in dz\right)  =\frac{\exp\left(  c_{1}y^{2}+c_{2}%
z^{2}\right)  }{E\left[\exp\left(  c_{1}Y^{2}+c_{2}Z^{2}\right)\right]  }P\left(  Y\in
dy\right)  P\left(  Z\in dz\right)
\]
$Y$ and $Z$ are independent with distributions $N(1,(1-2c_{1})^{-1})$ and
$N(1,(1-2c_{2})^{-1})$, respectively. Therefore,
\begin{align*}
\phi\left(  a,b,c\right)  =& (1-2c_{1})^{-1/2}(1-2c_{2})^{-1/2}E_{1}%
\exp\left(  a_{1}Y+a_{2}Z+bYZ\right) \\
=& (1-2c_{1})^{-1/2}(1-2c_{2})^{-1/2}\\
&  \times E[\exp\{  a_{1}Y(1-2c_{1})^{-1/2}+a_{2}Z(1-2c_{2})^{-1/2}\\
& +b(1-2c_{1})^{-1/2}(1-2c_{2})^{-1/2}YZ\} ] .
\end{align*}
Now, given $\left\vert \theta\right\vert <1$ define $P_{2}\left(
\cdot\right)  $ via
\[
P_{2}\left(  Y\in dy,Z\in dz\right)  =\frac{P\left(  Y\in dy,Z\in dz\right)
\exp\left(  \chi yz\right)  }{E[\exp\left(  \chi YZ\right)]  }.
\]
Observe that
\[
P\left(  Y\in dy,Z\in dz\right)  \exp\left(  \chi yz\right)  =\frac{1}{2\pi
}\exp\left(  -y^{2}/2-z^{2}/2+\chi yz\right)
\]
and
\[
-y^{2}/2-z^{2}/2+\chi yz=-(y,z)\Sigma^{-1}\binom{y}{z}/2,
\]
where
\[
\Sigma^{-1}=\left(
\begin{array}
[c]{cc}%
1 & -\chi\\
-\chi & 1
\end{array}
\right)  ,
\]
and thus
\[
\Sigma=\frac{1}{1-\chi^{2}}\left(
\begin{array}
[c]{cc}%
1 & \chi\\
\chi & 1
\end{array}
\right)  .
\]
Therefore, under $P_{2}(\cdot)$, ($Y,Z$) is distributed bivariate normal with
mean zero and covariance matrix $\Sigma$, with
\[
\chi=b(1-2c_{1})^{-1/2}(1-2c_{2})^{-1/2}%
\]
and we also must have that if $\left\vert \chi\right\vert <1$,
\[
E\left[\exp\left(  \phi YZ\right)\right]  =\left(  1-\chi^{2}\right)  ^{-1/2}=\left(
1-(b\left(  1-2c_{1}\right)  ^{-1/2}\left(  1-2c_{2}\right)  ^{-1/2}%
)^{2}\right)  ^{-1/2}.
\]
Consequently, we conclude that
\begin{align*}
\phi\left(  a,b,c\right)   =& \left(  1-2c_{1}\right)  ^{-1/2}\left(
1-2c_{2}\right)  ^{-1/2}\left(  1-(b\left(  1-2c_{1}\right)  ^{-1/2}\left(
1-2c_{2}\right)  ^{-1/2})^{2}\right)  ^{-1/2}\\
&  \times E_{2}\left[\exp(a_{1}Y(1-2c_{1})^{-1/2}+a_{2}Z(1-2c_{2})^{-1/2})\right].
\end{align*}
The final expression for $\phi\left(  a,b,c\right)  $ is obtained from the
fact that
\begin{align*}
&  E_{2}\left[\exp(a_{1}Y(1-2c_{1})^{-1/2}+a_{2}Z(1-2c_{2})^{-1/2})\right]\\
=& \exp\left(  Var_{2}(a_{1}Y(1-2c_{1})^{-1/2}+a_{2}Z(1-2c_{2})^{-1/2}%
)/2\right)  .
\end{align*}
And $P^{\prime}\left(  \cdot\right)  $ is equivalent to a standard
exponentially tilting to the measure $P_{2}(\cdot)$ using as the natural
parameter the vector
\[
\left(  a_{1}(1-2c_{1})^{-1/2},a_{2}(1-2c_{2})^{-1/2}\right)  ,
\]
and thus under $P^{\prime}\left(  \cdot\right)  $ the covariance matrix is the
same as under $P_{2}(\cdot)$ and the mean vector is equal to $\mu\left(
a,b,c\right)  $.
\end{proof}

\bigskip

We now are ready to provide the proof of Corollary \ref{Cor_E_nm1}.

\begin{proof}
[Proof of Corollary \ref{Cor_E_nm1}]Let us examine the term of the form\\
$\Lambda_{i}^{n+m}\left(  t_{2r-1}^{n+m}\right)  \Lambda_{j}\left(
t_{2r}^{n+m}\right)$, for $i\neq j$,
\begin{align*}
&  \Lambda_{i}^{n+m}\left(  t_{2r-1}^{n+m}\right)  \Lambda_{j}\left(
t_{2r}^{n+m}\right) \\
=& (\Lambda_{i}^{n+m-1}(t_{r}^{n+m-1})/2+\Delta_{n+m+1}^{1/2}W_{i,r}%
^{n+m})(\Lambda_{j}^{n+m-1}(t_{r}^{n+m-1})/2-\Delta_{n+m+1}^{1/2}W_{j,r}%
^{n+m})\\
=& \Lambda_{i}^{n+m-1}(t_{r}^{n+m-1})\Lambda_{j}^{n+m-1}(t_{r}^{n+m-1}%
)/4-\Delta_{n+m+1}W_{i,r}^{n+m}W_{j,r}^{n+m}\\
&  +\Delta_{n+m+1}^{1/2}W_{i,r}^{n+m}\Lambda_{j}^{n+m-1}(t_{r}^{n+m-1}%
)/2-\Delta_{n+m+1}^{1/2}W_{j,r}^{n+m}\Lambda_{i}^{n+m-1}(t_{r}^{n+m-1})/2.
\end{align*}
Then, we have that Corollary \ref{Cor_E_nm1} follows immediately from Lemma
\ref{Lem_Gen_Tilting_Gaussian}.
\end{proof}

\bigskip

Finally, we provide the proof of Corollary \ref{Cor_E_nmL}.

\begin{proof}
[Proof of Corollary \ref{Cor_E_nmL}]Recall that for each $r\in\{1,2,...,2^{n}%
\}$,%
\[
\Lambda_{i}^{n}(t_{r}^{n}):=(Z_{i}(t_{r}^{n})-Z_{i}(t_{r-1}^{n})).
\]
So%
\begin{align*}
\Lambda_{i}^{n}(t_{2r-1}^{n})  &  =\Lambda_{i}^{n}(t_{r}^{n-1})/2+\Delta
_{n+1}^{1/2}W_{i,r}^{n},\\
\Lambda_{i}^{n}(t_{2r}^{n})  &  =\Lambda_{i}^{n}(t_{r}^{n-1})/2-\Delta
_{n+1}^{1/2}W_{i,r}^{n}.
\end{align*}
We perform the first iteration in full detail, the rest are immediate just
adjusting the notation. From Corollary \ref{Cor_E_nm1} we obtain that, for $i\neq j$,
\begin{align*}
&  E_{n+m-1}\exp\left(  \theta_{0}[L_{i,j}^{n+m}\left(  k^{\prime}\right)
-L_{i,j}^{n+m}\left(  k\right)  ]\right) \\
=& \exp\left(\frac{1}{2}\sum_{r=k+1}^{k^{\prime}}\frac{\theta_{0}^{2}\Delta_{n+m+1}%
}{4\left(  1-\theta_{0}^{2}\Delta_{n+m+1}^{2}\right)  }\Lambda_{i}\left(
t_{r}^{n+m-1}\right)  ^{2}+\frac{1}{2}\sum_{r=k+1}^{k^{\prime}}\frac
{\theta_{0}^{2}\Delta_{n+m+1}}{4\left(  1-\theta_{0}^{2}\Delta_{n+m}^{2}\right)
}\Lambda_{j}\left(  t_{r}^{n+m-1}\right)  ^{2}\right)\\
&  \times\exp\left(\sum_{r=k+1}^{k^{\prime}}\frac{\theta_{0}\Delta_{n+m+1}}{4\left(
1-\theta_{0}^{2}\Delta_{n+m+1}^{2}\right)  }\Lambda_{i}\left(  t_{r}%
^{n+m-1}\right)  \Lambda_{j}\left(  t_{r}^{n+m-1}\right) \right)\times(1-\theta
_{0}^{2}\Delta_{n+m+1}^{2})^{-(k^{\prime}-k)/2}.
\end{align*}
Using the definitions in (\ref{Many_Defs}) we have that the exponential component
\begin{align*}
&  \frac{1}{2}\sum_{r=k+1}^{k^{\prime}}\frac{\theta_{0}^{2}\Delta_{n+m+1}%
}{4\left(  1-\theta_{0}^{2}\Delta_{n+m+1}^{2}\right)  }\Lambda_{i}\left(
t_{r}^{n+m-1}\right)  ^{2}+\frac{1}{2}\sum_{r=k+1}^{k^{\prime}}\frac
{\theta_{0}^{2}\Delta_{n+m+1}}{4\left(  1-\theta_{0}^{2}\Delta_{n+m+1}^{2}\right)
}\Lambda_{j}\left(  t_{r}^{n+m-1}\right)  ^{2}\\
&  +\sum_{r=k+1}^{k^{\prime}}\frac{\theta_{0}\Delta_{n+m+1}}{4\left(
1-\theta_{0}^{2}\Delta_{n+m+1}^{2}\right)  }\Lambda_{i}\left(  t_{r}%
^{n+m-1}\right)  \Lambda_{j}\left(  t_{r}^{n+m-1}\right)
\end{align*}
is equal to
\begin{align*}
&  \sum_{r=1}^{2^{n+m-2}}[\eta_{1}\left(  t_{2r-1}^{n+m-1}\right)  \Lambda
_{i}\left(  t_{2r-1}^{n+m-1}\right)  ^{2}+\eta_{1}\left(  t_{2r}%
^{n+m-1}\right)  \Lambda_{i}\left(  t_{2r}^{n+m-1}\right)  ^{2}]\\
&  +\sum_{r=1}^{2^{n+m-2}}[\eta_{1}\left(  t_{2r-1}^{n+m-1}\right)
\Lambda_{j}\left(  t_{2r-1}^{n+m-1}\right)  ^{2}+\eta_{1}\left(
t_{2r}^{n+m-1}\right)  \Lambda_{j}\left(  t_{2r}^{n+m-1}\right)^{2}]\\
&  +\sum_{r=1}^{2^{n+m-2}}[\theta_{1}\left(  t_{2r-1}^{n+m-1}\right)
\Lambda_{i}\left(  t_{2r-1}^{n+m-1}\right)  \Lambda_{j}\left(  t_{2r-1}%
^{n+m-1}\right)\\
& +\theta_{1}\left(  t_{2r}^{n+m-1}\right)  \Lambda_{i}\left(
t_{2r}^{n+m-1}\right)  \Lambda_{j}\left(  t_{2r}^{n+m-1}\right)].
\end{align*}
We next expand each of the terms. To simplify the notation, we write
\[
x=W_{i,r}^{n+m-1}\text{ \ and \ }y=W_{j,r}^{n+m-1}.
\]
Define $\sqrt{\Delta}=\Delta_{n+m}^{1/2}$, put $u=\Lambda_{i}\left(
t_{r}^{n+m-2}\right)  $ and $v=\Lambda_{j}\left(  t_{r}^{n+m-2}\right)  $%
\begin{align*}
\Lambda_{i}\left(  t_{2r-1}^{n+m-1}\right)   &  =u/2+\sqrt{\Delta
}x\text{,\ \ \ \ }\Lambda_{i}\left(  t_{2r}^{n+m-1}\right)  =u/2-\sqrt{\Delta}
x,\\
\Lambda_{j}\left(  t_{2r}^{n+m-1}\right)   &  =v/2+\sqrt{\Delta}%
y\text{,\ \ \ \ }\Lambda_{j}\left(  t_{2r}^{n+m-1}\right)  =v/2-\sqrt{\Delta}
y.
\end{align*}
Now, for brevity let us write $\eta_{o}=\eta_{1}\left(  t_{2r-1}%
^{n+m-1}\right)  $ and $\eta_{e}=\eta_{1}\left(  t_{2r}^{n+m-1}\right)  $ (`o'
is used for odd, and `e' for even)
\begin{align*}
&  (\eta_{1}\left(  t_{2r-1}^{n+m-1}\right)  \Lambda_{i}\left(  t_{2r-1}%
^{n+m-1}\right)  ^{2}+\eta_{1}\left(  t_{2r}^{n+m-1}\right)  \Lambda
_{i}\left(  t_{2r}^{n+m-1}\right)  ^{2}\\
&  +\eta_{1}\left(  t_{2r-1}^{n+m-1}\right)  \Lambda_{j}\left(  t_{2r-1}%
^{n+m-1}\right)  ^{2}+\eta_{1}\left(  t_{2r}^{n+m-1}\right)  \Lambda
_{j}\left(  t_{2r}^{n+m-1}\right)  ^{2})\\
=& \left(  \eta_{o}\left(  u/2+\sqrt{\Delta} x\right)  ^{2}+\eta_{e}\left(
u/2-\sqrt{\Delta} x\right)  ^{2}+\eta_{o}\left(  v/2+\sqrt{\Delta} y\right)
^{2}+\eta_{e}\left(  v/2-\sqrt{\Delta} y\right)  ^{2}\right) \\
=& \frac{1}{4}u^{2}(\eta_{e}+\eta_{o})+\frac{1}{4}v^{2}(\eta_{e}+\eta_{o})
+u(\eta_{o}-\eta_{e})\sqrt{\Delta}x + v(\eta_{o}-\eta_{e})\sqrt{\Delta}y\\
&+(\eta_{e}+\Delta\eta_{o})\Delta x^{2}+(\eta_{e}+\eta_{o})\Delta y^{2}.
\end{align*}
Likewise, put $\theta_{o}=\theta_{1}\left(  t_{2r-1}^{n+m-1}\right)  $ and
$\theta_{e}=\theta_{1}\left(  t_{2r}^{n+m-1}\right)  $%
\begin{align*}
&  \theta_{1}\left(  t_{2r-1}^{n+m-1}\right)  \Lambda_{i}\left(
t_{2r-1}^{n+m-1}\right)  \Lambda_{j}\left(  t_{2r-1}^{n+m-1}\right)
+\theta_{1}\left(  t_{2r}^{n+m-1}\right)  \Lambda_{i}\left(  t_{2r}%
^{n+m-1}\right)  \Lambda_{j}\left(  t_{2r}^{n+m-1}\right)  \\
=&\theta_{o}\left(  u/2+\sqrt{\Delta}x\right)  \left(  v/2+\sqrt{\Delta
}y\right)  +\theta_{e}\left(  u/2-\sqrt{\Delta}x\right)  \left(
v/2-\sqrt{\Delta}y\right) \\
=&\frac{1}{4}uv(\theta_{e}+\theta_{o})+(\theta_{e}+\theta_{o})\Delta xy
+\frac{1}{2} v(\theta_{o}-\theta_{e})\sqrt{\Delta} x +\frac{1}{2}u(\theta
_{o}-\theta_{e})\sqrt{\Delta} y
\end{align*}
We then collect the terms free of $x$ and $y$ and obtain%
\[
\frac{u^{2}}{4}\left(  \eta_{e}+\eta_{o}\right)  +\frac{v^{2}}{4}\left(
\eta_{e}+\eta_{o}\right)  +\frac{uv}{4}(\theta_{e}+\theta_{o}).
\]
Now the coefficients of $x,y,x^{2},y^{2},$ and $xy$%
\begin{align*}
&  \{u(\eta_{o}-\eta_{e})+\frac{1}{2}v(\theta_{o}-\theta_{e})\}\sqrt{\Delta
}x+\{v(\eta_{o}-\eta_{e})+\frac{1}{2}u(\theta_{o}-\theta_{e})\}\sqrt{\Delta
}y\\
&  +(\eta_{e}+\eta_{o})\Delta x^{2}+(\eta_{e}+\eta_{o})\Delta y^{2}\\
&  +(\theta_{e}+\theta_{o})\Delta xy.
\end{align*}
And finally we can apply Lemma \ref{Lem_Gen_Tilting_Gaussian} to get the
corresponding results.
\end{proof}

\subsubsection{Proofs of results in Section \protect\ref{Sub_Cond_LD}}

\begin{proof}[Proof of Lemma \ref{Lemma_MGF_Clean}]
Recalling expression (\ref{E_n_FIN}), we
establish the bound for $E_{n}\left[\exp\left(  \theta_{0}\{L_{i,j}^{n+1}(k^{\prime
})-L_{i,j}^{n+m}(k)\}\right)\right]$, for $i\neq j$, by controlling the contribution of the term
\begin{equation}
{\displaystyle\prod\limits_{l=2}^{m}} {\displaystyle\prod\limits_{r=1}%
^{2^{n+m-l}}} C\left(  t_{r}^{n+m-l}\right)  . \label{Cont_Constants}%
\end{equation}
and the exponential term
\begin{equation}
\label{Cont_Exp}\exp\left(  \sum_{r=1}^{2^{n}}\theta_{m}(t_{r}^{n})\Lambda
_{i}(t_{r}^{n})\Lambda_{j}(t_{r}^{n}) + \sum_{r=1}^{2^{n}}\eta_{m}(t_{r}%
^{n})\left(  \Lambda_{i}(t_{r}^{n})^{2}+\Lambda_{j}(t_{r}^{n})^{2}\right)
\right)
\end{equation}
separately. \newline

We start by analyzing $\theta_{l}$ and $\eta_{l}$. From Corollary
\ref{Cor_E_nm1}, we have
\[
\theta_{1} =\frac{\theta_{0}}{4\left(  1-\theta_{0}^{2}\Delta_{n+m+1}%
^{2}\right)  } \mbox{ and } \eta_{1} =\frac{\theta_{0}^{2}\Delta_{n+m+1}%
}{8\left(  1-\theta_{0}^{2}\Delta_{n+m+1}^{2}\right)  }.
\]
We notice that $2\eta_{1} \leq\theta_{1}^{2}\Delta_{n+m+1} \leq(5/2)\eta_{1}$.

Let
\[
u=\max\{h: k^{\prime}-k>2^{h}\}.
\]
We also denote
\[
\underline{b}(l):=\min\{r: \theta_{l}(t_{r}^{n+m-l})>0\}
\]
and
\[
\bar{b}(l):=\max\{r: \theta_{l}(t_{r}^{n+m-l})>0\}.
\]

The strategy throughout the rest of the proof proceeds as follows. We have
that the $\theta_{l}(t_{r}^{n+m-l})$'s and $\eta_{l}(t_{r}^{n+m-l})$'s,
$r=1,2,\dots,2^{n+m-l}$, are nonnegative. We also have that for $l\leq u\wedge
m$, the number of positive $\theta_{l}(t_{r}^{n+m-l})$'s and $\eta_{l}%
(t_{r}^{n+m-l})$'s reduces by about a half at each step $l$ and also the
actual value of the positive $\theta_{l}(t_{r}^{n+m-l})$'s and $\eta_{l}%
(t_{r}^{n+m-l})$'s shrinks by at least $1/2$. We will establish that if $m>u$,
for $u<l\leq m$, there are at most two positive $\theta_{l}(t_{r}^{n+m-l})$'s
and two positive $\eta_{l}(t_{r}^{n+m-l})$'s and at each step $l$, their
values shrink by more than $2^{-3/2}$. Using these observations we will
establish some facts and then use them to estimate (\ref{Cont_Constants}) and
finally (\ref{Cont_Exp}). We now proceed to carry out this strategy. \newline

We first verify the following claims.\newline

\textbf{Claim 1}:

For $l\leq u$, we claim that $\theta_{l}(t_{r}^{n+m-l}),\eta_{l}(t_{r}%
^{n+m-l})\geq0$ for all $r=1,2,\dots,2^{n+m-l}$ and $\theta_{l}(t_{r}%
^{n+m-l})$'s are equal for $r\in(\underline{b}(l),\bar{b}(l))$ and we denote their
values as $\theta_{l}$. So, following the recursion in \eqref{Many_Defs} we
have that $\theta_{l}=\Delta_{l-1}\theta_{1}$. If $\theta_{l}(t_{\underline
{b}(l)}^{n+m-l})\neq\theta_{l}(t_{\underline{b}(l)+1}^{n+m-l})$, then $\theta
_{l}(t_{\underline{b}(l)}^{n+m-l})<\theta_{l}(t_{\underline{b}(l)+1}^{n+m-l}%
)=\theta_{l}$, and if $\theta_{l}(t_{\bar{b}(l)}^{n+m-l})\neq\theta_{l}%
(t_{\bar{b}(l)-1}^{n+m-l})$, then $\theta_{l}(t_{\bar{b}(l)}^{n+m-l})<\theta
_{l}(t_{\bar{b}(l)-1}^{n+m-l})=\theta_{l}$.
Likewise, $\eta_{l}(t_{r}^{n+m-l})$'s are equal for $r\in(\underline{b}(l)%
,\bar{b}(l))$; we denote their common values as $\eta_{l}$ and we have from
\eqref{Many_Defs} that $\eta_{l}=\Delta_{l-1}\eta_{1}$. If $\eta
_{l}(t_{\underline{b}(l)}^{n+m-l})\neq\eta_{l}(t_{\underline{b}(l)+1}^{n+m-l})$,
then $\eta_{l}(t_{\underline{b}(l)}^{n+m-l})<\eta_{l}(t_{\underline{b}(l)+1}%
^{n+m-l})$, and if $\eta_{l}(t_{\bar{b}(l)}^{n+m-l})\neq\eta_{l}(t_{\bar{b}(l)%
-1}^{n+m-l})$, then $\eta_{l}(t_{\bar{b}}^{n+m-l})<\eta_{l}(t_{\bar{b}(l)%
-1}^{n+m-l})$. In other words, at each step, $l$ for $l<u$, $\theta_{l}%
(t_{r}^{n+m-l})$ and $\eta_{l}(t_{r}^{n+m-l})$ decay at rate $1/2$ if it is
not at the boundary ($r\in(\underline{b}(l),\bar{b}(l))$), and the boundary ones
($\theta_{l}(t_{\underline{b}(l)}^{n+m-l})$, $\theta_{l}(t_{\bar{b}(l)}^{n+m-l})$
and $\eta_{l}(t_{\underline{b}(l)}^{n+m-l})$, $\eta_{l}(t_{\bar{b}(l)}^{n+m-l})$),
may decay at a faster rate.\newline

We now prove the claim by induction using the recursive relation in
\eqref{Many_Defs}. The claim is immediate for $\theta_{1}$ and $\eta_{1}$. Now
suppose it holds for $\theta_{l}(t_{r}^{n+m-l})$ and $\eta_{l}(t_{r}^{n+m-l}%
)$, $r=1,2,\dots,2^{n+m-l}$. We next show that the claim holds for
$\theta_{l+1}(t_{r}^{n+m-l-1})$, $r=1,2,\dots,2^{n+m-l-1}$, as well. We omit
the proof of $\eta_{l+1}(t_{r}^{n+m-l-1})$ here, as it follows exactly the
same line of analysis as $\theta_{l+1}(t_{r}^{n+m-l-1})$.\newline

We divide the analysis into five cases.\newline

\noindent\textbf{Case 1.}
$\theta_{l}\left(  t_{2r-1}^{m+n-l}\right)  =\theta_{l}\left(  t_{2r}%
^{m+n-l}\right)  $ and $\eta_{l}\left(  t_{2r-1}^{m+n-l}\right)  =\eta
_{l}\left(  t_{2r}^{m+n-l}\right)  $. Then $\theta_{+}^{l+1}\left(
t_{r}^{m+n-l}\right)  =2\theta_{l}\left(  t_{2r-1}^{m+n-l+1}\right)  $ and
$\theta_{-}^{l+1}\left(  t_{r}^{m+n-l}\right)  =0$. Likewise $\eta_{+}%
^{l+1}\left(  t_{r}^{m+n-l}\right)  =2\eta_{l}\left(  t_{2r-1}^{m+n-l+1}%
\right)  $ and $\eta_{-}^{l+1}\left(  t_{r}^{m+n-l}\right)  =0$. From
\eqref{Many_Defs}, we have $\theta_{l}\left(  t_{r}^{m+n-l-1}\right)
=\theta_{l-1}\left(  t_{2r-1}^{m+n-l+1}\right)  /2$ and $\eta_{l}\left(
t_{r}^{m+n-l-1}\right)  =\eta_{l-1}\left(  t_{2r-1}^{m+n-l+1}\right)
/2$.\newline

\noindent\textbf{Case 2.} $\theta_{l}\left(  t_{2r-1}^{m+n-l}\right)  =0$,
$\theta_{l}\left(  t_{2r}^{m+n-l}\right)  >0$ and $\eta_{l}\left(
t_{2r-1}^{m+n-l}\right)  =0$, $\eta_{l}\left(  t_{2r}^{m+n-l}\right)  >0$.
Then we know that $2r=\underline{b}(l)$. We also have $\theta_{+}^{l+1}\left(
t_{r}^{m+n-l-1}\right)  =\theta_{l}\left(  t_{2r}^{m+n-l}\right)  $ and
$\theta_{-}^{l+1}\left(  t_{r}^{m+n-l-1}\right)  =-\theta_{l}\left(
t_{2r}^{m+n-l}\right)  $. Likewise, $\eta_{+}^{l+1}\left(  t_{r}%
^{m+n-l-1}\right)  =\eta_{l}\left(  t_{2r}^{m+n-l}\right)  $ and $\eta
_{-}^{l+1}\left(  t_{r}^{m+n-l-1}\right)  =-\eta_{l}\left(  t_{2r}%
^{m+n-l}\right)  $. We rewrite the expression for $\theta_{l+1}(t_{r}%
^{n+m-l-1})$ in \eqref{Many_Defs} as
\begin{align}
&\theta_{l+1}\left(  t_{r}^{m+n-l-1}\right) \nonumber\\
 =  &  \theta_{+}^{l+1}%
(t_{r}^{m+n-l-1})\frac{1}{4}+|\theta_{-}^{l+1}(t_{r}^{m+n-l-1})|\times\{
h_{l+1}(t_{r}^{m+n-l-1})|\eta_{-}^{l+1}(t_{r}^{m+n-l-1})| \nonumber\\
&  +\frac{1}{4}h_{l+1}(t_{r}^{m+n-l-1})|\theta_{-}^{l+1}(t_{r}^{m+n-l-1}%
)|\rho_{l+1}(t_{r}^{m+n-l-1}) \nonumber\\
&  +h_{l+1}(t_{r}^{m+n-l-1})\eta_{-}^{l+1}(t_{r}^{m+n-l-1})^{2}\frac
{\rho_{l+1}(t_{r}^{m+n-l-1})}{|\theta_{-}^{l+1}(t_{r}^{m+n-l-1})|}\} \nonumber\\
=  &  \theta_{l}(t_{2r}^{m+n-l})\times\{\frac{1}{4}+h_{l+1}(t_{r}^{m+n-l-1})\eta
_{l}(t_{2r}^{m+n-l})\nonumber\\
&  +\frac{1}{4}h_{l+1}(t_{r}^{m+n-l-1})\theta_{l}(t_{2r}^{m+n-l})\rho
_{l+1}(t_{r}^{m+n-l-1})\nonumber\\
&  +h_{l+1}(t_{r}^{m+n-l-1})\eta_{l}(t_{2r}^{m+n-l})^{2}\frac{\rho_{l+1}%
(t_{r}^{m+n-l-1})}{\theta_{l}(t_{2r}^{m+n-l})}\} \label{eq:reorg}
\end{align}
As
\[
\theta_{l}\Delta_{n+m-l}\leq\theta_{1}\Delta_{n+m-1} \leq\frac{1}{4}%
\]
and
\[
\eta_{l}\Delta_{n+m-l} \leq\eta_{1}\Delta_{n+m-1} \leq\frac{1}{48},
\]
then
$$\rho_{l+1}(t_{r}^{m+n-l-1})\leq \frac{\frac{1}{4}\frac{1}{4}}{1-\frac{1}{2}\frac{1}{48}}<\frac{1}{15}$$
and
$$h_{l+1}(t_{r}^{m+n-l-1})\theta_{l}(t_{2r}^{m+n-l})\leq\frac{\frac{1}{4}}{\left(1-\frac{1}{2}\frac{1}{48}\right)\left(1-\frac{1}{15^2}\right)}<\frac{1}{3}$$
Likewise,
$$h_{l+1}(t_{r}^{m+n-l-1})\eta_{l}(t_{2r}^{m+n-l})<1/95$$
and
$$\eta_{l}(t_{2r}^{m+n-l})\frac{\rho_{l+1}(t_{r}^{m+n-l-1})}{\theta_{l}(t_{2r}^{m+n-l})}<1/95$$
Plug these in \eqref{eq:reorg}, we have
\[
\frac{1}{4}\theta_{l}\left(  t_{2r}^{m+n-l}\right)  <\theta_{l+1}\left(
t_{r}^{m+n-l-1}\right)  < \frac{3}{10} \theta_{l} \leq\frac{3}{5} \theta
_{l+1}.
\]
\newline

\noindent\textbf{Case 3.} $\theta_{l}\left(  t_{2r-1}^{m+n-l}\right)  >0$,
$\theta_{l}\left(  t_{2r}^{m+n-l}\right)  =0$ and $\eta_{l}\left(
t_{2r-1}^{m+n-l}\right)  >0$, $\eta_{l}\left(  t_{2r}^{m+n-l}\right)  =0$.
Then we know that $2r-1=\bar{b}(l)$. Following the same line of analysis as in
Case 2, we have
\[
\frac{1}{4}\theta_{l}\left(  t_{2r}^{m+n-l}\right)  <\theta_{l+1}\left(
t_{r}^{m+n-l-1}\right)  <\frac{3}{10} \theta_{l} \leq\frac{3}{5} \theta
_{l+1}.
\]
\newline

\noindent\textbf{Case 4.} $0<\theta_{l}\left(  t_{2r-1}^{m+n-l}\right)
<\theta_{l}\left(  t_{2r}^{m+n-l}\right)  $ and $0<\eta_{l}\left(
t_{2r-1}^{m+n-l}\right)  <\theta_{l}\left(  t_{2r}^{m+n-l}\right)  $. Then we
know that $2r-1=\underline{b}(l)$. There exist $\xi<1$, such that $\theta
_{l}\left(  t_{2r-1}^{m+n-l}\right)  \leq\xi\theta_{l}\left(  t_{2r}%
^{m+n-l}\right)  =\xi\Delta_{l-1}\theta_{1}$ and $\eta_{l}\left(
t_{2r-1}^{m+n-l}\right)  \leq\xi\eta_{l}\left(  t_{2r}^{m+n-l}\right)
=\xi\Delta_{l-1}\eta_{1}$. From \eqref{Many_Defs}, we have
\begin{align*}
&\theta_{l+1}\left(  t_{r}^{m+n-l-1}\right)\\
\leq &  \theta_{+}^{l+1}%
(t_{r}^{m+n-l-1})\left\{  \frac{1}{4}+h_{l+1}(t_{r}^{m+n-l-1})\eta_{-}%
^{l+1}(t_{r}^{m+n-l-1})^{2}\frac{\rho_{l+1}(t_{r}^{m+n-l-1})}{\theta_{+}%
^{l+1}(t_{r}^{m+n-l-1})}\right\} \\
&  +|\theta_{-}^{l+1}(t_{r}^{m+n-l-1})|\times\{h_{l+1}(t_{r}^{m+n-l-1})|\eta
_{-}^{l+1}(t_{r}^{m+n-l-1})|\\
& +\frac{1}{4}h_{l+1}(t_{r}^{m+n-l-1})|\theta_{-}^{l+1}(t_{r}^{m+n-l-1}%
)|\rho_{l+1}(t_{r}^{m+n-l-1})\}.
\end{align*}
As $|\theta_{-}^{l+1}(t_{r}^{m+n-l-1})|\leq\theta_{l}$ and $|\eta_{-}%
^{l+1}(t_{r}^{m+n-l-1})|\leq\eta_{l}$, following the same calculation as in Case 2, it is easy to check that
\[
\theta_{l+1}\left(  t_{r}^{m+n-l-1}\right)  <\theta_{+}^{l+1}(t_{r}%
^{m+n-l-1})\left(  \frac{1}{4}+0.01\right)  +|\theta_{-}^{l+1}(t_{r}%
^{m+n-l-1})|\times0.05.
\]
Since $\theta_{l+1}\left(  t_{r}^{m+n-l-1}\right)  +|\theta_{-}^{l+1}%
(t_{r}^{m+n-l-1})|=\theta_{l}$, we have
\begin{align*}
\theta_{l+1}\left(  t_{r}^{m+n-l-1}\right)  &< \theta_{l}\left(  \left(
\frac{1}{4}+0.01-0.05\right)  (1+\xi)+0.05\right)\\
&=\frac{1}{2}\theta
_{l}\left(  \frac{1}{2}+0.02+0.42\xi\right)  <\frac{\theta_{l}}{2}%
=\theta_{l+1}.
\end{align*}

\noindent\textbf{Case 5.} $\theta_{l}\left(  t_{2r-1}^{m+n-l}\right)
>\theta_{l}\left(  t_{2r}^{m+n-l}\right)  >0$ and $\eta_{l}\left(
t_{2r-1}^{m+n-l}\right)  >\theta_{l}\left(  t_{2r}^{m+n-l}\right)  >0$. Then
we know that $2r=\bar{b}(l)$. Following the same line of analysis as in Case 4,
we have
\[
\theta_{l+1}\left(  t_{r}^{m+n-l-1}\right)  <\theta_{l+1}.
\]
We thus prove that the claim holds for $\theta_{l+1}(t_{r}^{m+n-l-1})$,
$r=1,2,\dots,2^{n+m-l-1}$, as well.\newline

We have established Claim 1. We now continue with a second claim.\\

\textbf{Claim 2: }

For $u<l<m$, we have at most two positive $\theta_{l}(t_{r}^{m+n-l})$'s,
namely $\theta_{l}(t_{\underline{b}(l)}^{m+n-l})$ and $\theta_{l}(t_{\bar{b}(l)%
}^{m+n-l})$. Notice that it is possible that $\underline{b}(l)=\bar{b}(l)$. We then
claim that if $\underline{b}\neq\bar{b}$,$\theta_{l}(t_{\underline{b}(l)}%
^{m+n-l})\leq\Delta_{l-1}\theta_{1}2^{-(l-u-1)/2}$ and $\theta_{l}(t_{\bar{b}(l)%
}^{m+n-l})\leq\Delta_{l-1}\theta_{1}2^{-(l-u-1)/2}$. Similarly $\eta
_{l}(t_{\underline{b}(l)}^{m+n-l})\leq\Delta_{l-1}\eta_{1}2^{-(l-u-1)/2}$ and
$\eta_{l}(t_{\bar{b}(l)}^{m+n-l})\leq\Delta_{l-1}\eta_{1}2^{-(l-u-1)/2}$. If
$\underline{b}(l)=\bar{b}(l)$, $\theta_{l}(t_{\underline{b}(l)}^{m+n-l})\leq
\Delta_{l-1}\theta_{1}2^{-(l-u-2)/2}$, $\theta_{l}(t_{\bar{b}(l)}^{m+n-l}%
)\leq\Delta_{l-1}\theta_{1}2^{-(l-u-2)/2}$ and $\eta_{l}(t_{\underline{b}(l)%
}^{m+n-l})\leq\Delta_{l-1}\eta_{1}2^{-(l-u-2)/2}$, $\eta_{l}(t_{\bar{b}(l)%
}^{m+n-l})\leq\Delta_{l-1}\eta_{1}2^{-(l-u-2)/2}$.\newline

We prove the claim by induction. We shall give the proof of $\theta_{l}%
(t_{r}^{m+n-l})$ only, as the proof of $\eta_{l}(t_{r}^{m+n-l})$ follows
exactly the same line of analysis. For $l=u$, we have the following
cases.\newline

\textbf{i)} $\bar{b}(l)=\underline{b}(l)+2$, $\underline{b}(l)$ is odd. In this case,
$\theta_{l+1}(t_{(\underline{b}(l)+1)/2}^{m+n-l-1})<\Delta_{l}\theta_{1}$, which
follows from the analysis in Case 4 for $l\leq u$. And $\theta_{l+1}%
(t_{(\bar{b}(l)+1)/2}^{m+n-l-1})<(3/5)\Delta_{l}\theta_{1}$, following the
analysis in Case 3 for $l\leq u$.\\

\textbf{ii)} $\bar{b}(l)=\underline{b}(l)+2$, $\underline{b}(l)$ is even. In this case,
$\theta_{l+1}(t_{\underline{b}(l)/2}^{m+n-l-1})<(3/5)\Delta_{l}\theta_{1}$, which
follows from the analysis in Case 2 for $l\leq u$. And $\theta_{l+1}%
(t_{\bar{b}(l)/2}^{m+n-l-1})<\Delta_{l}\theta_{1}$, following the analysis in
Case 5, for $l\leq u$.\\

\textbf{iii)} $\bar{b}(l)=\underline{b}(l)+1$, $\underline{b}(l)$ is odd. In this case,
let $\bar{\theta}_{l}=\max\{\theta_{l}(t_{\underline{b}(l)}^{m+n-l}),\theta
_{l}(t_{\bar{b}(l)}^{m+n-l})\}$, Then following the same analysis as in Case 4 or
Case 5 for $l\leq u$ (depending on which one of $\theta_{l}(t_{\underline{b}(l)%
}^{m+n-l})$ and $\theta_{l}(t_{\bar{b}(l)}^{m+n-l})$ is smaller), we have
$\theta_{l+1}(t_{\bar{b}(l)/2}^{m+n-l-1})<\bar{\theta}_{l}/2\leq\Delta_{l}%
\theta_{1}$.\\

\textbf{iv)} $\bar{b}(l)=\underline{b}(l)+1$, $\underline{b}(l)$ is even. In this case,
$\theta_{l+1}(t_{\underline{b}(l)/2}^{m+n-l-1})<(3/5)\Delta_{l}\theta_{1}$, which
follows from the analysis in Case 2 for $l\leq u$. And $\theta_{l+1}%
(t_{(\bar{b}(l)+1)/2}^{m+n-l-1})<(3/5)\Delta_{l}\theta_{1}$, following the
analysis in Case 3 for $l\leq u$.\newline

Therefore, the claim holds for $u+1$. Suppose the claim holds for $l\geq u+1$.
Then when moving from level $l$ to level $l+1$, one of the following three cases can happen.\\

\textbf{a)} $\bar{b}(l)=\underline{b}(l)+1$ and $\underline{b}(l)$ is even. In this
case, following the analysis in Case 2 and Case 3 for $l\leq u$, we have
\[
\theta_{l+1}(t_{\underline{b}(l)/2}^{m+n-l-1})\leq\frac{3}{10}\theta
_{l}(t_{\underline{b}(l)}^{m+n-l})\leq\Delta_{l}\theta_{1}2^{-(l-u)/2}%
\]
and
\[
\theta_{l+1}(t_{(\bar{b}(l)+1)/2}^{m+n-l-1})\leq\frac{3}{10}\theta_{l}(t_{\bar
{b}(l)}^{m+n-l})\leq\Delta_{l}\theta_{1}2^{-(l-u)/2}.
\]

\textbf{b)} $\bar{b}(l)=\underline{b}(l)$. In this case, following the analysis in
Case 2 or Case 3 for $l\leq u$ (depending on whether $\underline{b}(l)$ is odd or
even), we have
\[
\theta_{l+1}(t_{\lceil\underline{b}(l)/2\rceil}^{m+n-l-1})\leq\frac{3}{10}%
\theta_{l}(t_{\underline{b}(l)}^{m+n-l})\leq\Delta_{l}\theta_{1}2^{-(l-u-1)/2}.
\]

\textbf{c)} $\bar{b}(l)=\underline{b}(l)+1$ and $\underline{b}(l)$ is odd. In this
case, we let $\bar{\theta}_{l}=\max\{\theta_{l}(t_{\underline{b}(l)}%
^{m+n-l}),\theta_{l}(t_{\bar{b}(l)}^{m+n-l})\}$, Then we can use the same
analysis as in Case 4 or Case 5 for $l\leq u$ (depending on which one of
$\theta_{l}(t_{\underline{b}(l)}^{m+n-l})$ and $\theta_{l}(t_{\bar{b}(l)}^{m+n-l})$
is smaller) to conclude that
\[
\theta_{l+1}(t_{\bar{b}(l)/2}^{m+n-l-1})<\frac{1}{2}\bar{\theta}_{l}\leq
\Delta_{l}\theta_{1}2^{-(l-u-1)/2}.
\]
We notice that case c) can happen only once.

We are now ready to control the contribution of the term
\eqref{Cont_Constants}. As $\Delta_{n+m-l+2}\eta_{+}^{l}(t_{r}^{n+m-l}%
)\leq1/30$ and $\rho_{l}(t_{r}^{n+m-l})<1/7$, we have when $m\leq u$
\begin{align*}
&  \prod_{l=2}^{m}\prod_{r=1}^{2^{n+m-l}}C(t_{r}^{n+m-l})\\
\leq &  \prod_{l=2}^{m}\prod_{r=1}^{2^{n+m-l}}\exp\left(  4\Delta
_{n+m-l+2}\eta_{+}^{l}(t_{r}^{n+m-l})+\rho_{l}(t_{r}^{n+m-l})^{2}\right) \\
\leq &  \prod_{l=2}^{m}\exp\left(  \left(  16\Delta_{n+m}\eta_{1}%
+\frac{(4\Delta_{n+m}\theta_{1})^{2}}{(1-8\Delta_{n+m}\eta_{1})^{2}}\right)
((k^{\prime}-k)\Delta_{l}+2)\right) \\
\leq &  \prod_{l=2}^{m}\exp\left(  \left(  \frac{11}{5}\frac{\gamma^{2}%
}{k^{\prime}-k}\Delta_{n}^{1-2\alpha^{\prime}}+\frac{6}{5}\frac{\gamma^{2}%
}{k^{\prime}-k}\Delta_{n}^{2-4\alpha^{\prime}}\right)  ((k^{\prime}%
-k)\Delta_{l}+2)\right).
\end{align*}
The last inequality follows from Corollary 2 that $\theta_{1}=\theta
_{0}/4(1-\theta_{0}^{2}\Delta_{n+m}^{2})$, $\eta_{1}=\theta_{0}^{2}%
\Delta_{n+m}/2(1-\theta_{0}^{2}\Delta_{n+m}^{2})$, and our choice of
$\theta_{0}=\gamma/(k^{\prime1/2}\Delta_{n}^{2\alpha^{\prime}}\Delta_{m})$.
Then, as $(k^{\prime}-k)^{-1}\leq2^{-m}$,
\begin{align*}
&  \prod_{l=2}^{m}\prod_{r=1}^{2^{n+m-l}}C(t_{r}^{n+m-l})\\
\leq &  \exp\left(  \frac{11}{5}\gamma^{2}\left(  \sum_{l=2}^{m}\Delta
_{l}+2(m-1)\Delta_{m}\right)  +\frac{6}{5}\gamma^{2}\left(  \sum_{l=2}%
^{u}\Delta_{l}+2(m-1)\Delta_{m}\right)  \right) \\
\leq &  \exp\left(  \frac{8}{25}\right).
\end{align*}
When $m>u$,
\begin{align*}
&  \prod_{l=2}^{m}\prod_{r=1}^{2^{n+m-l}}C(t_{r}^{n+m-l})\\
\leq &  \prod_{l=2}^{m}\prod_{r=1}^{2^{n+m-l}}\exp\left(  4\Delta
_{n+m-l+2}\eta_{+}^{l}(t_{r}^{n+m-l})+\rho_{l}(t_{r}^{n+m-l})^{2}\right) \\
\leq &  \prod_{l=2}^{u}\exp\left(  \left(  \frac{11}{5}\frac{\gamma^{2}%
}{k^{\prime}-k}\Delta_{n}^{1-2\alpha^{\prime}}+\frac{6}{5}\frac{\gamma^{2}%
}{k^{\prime}-k}\Delta_{n}^{2-4\alpha^{\prime}}\right)  ((k^{\prime}%
-k)\Delta_{l}+2)\right) \\
&  \times\prod_{l=u+1}^{m}\exp\left(  \frac{11}{5}\frac{\gamma^{2}}{k^{\prime
}-k}\Delta_{n}^{1-2\alpha^{\prime}}\Delta_{l-u-2}^{1/2}+\frac{6}{5}%
\frac{\gamma^{2}}{k^{\prime}-k}\Delta_{n}^{2-4\alpha^{\prime}}\Delta
_{l-u-2}\right).
\end{align*}
As $(k^{\prime}-k)^{-1}\leq2^{-u}$,
\begin{align*}
&  \prod_{l=2}^{m}\prod_{r=1}^{2^{n+m-l}}C(t_{r}^{n+m-l})\\
\leq &  \exp\{ \frac{11}{5}\gamma^{2}\left(  \sum_{l=2}^{u}\Delta
_{l}+2(u-1)\Delta_{u}+\sum_{l=u+1}^{m}\Delta_{l-2}^{1/2}\right)\\
& +\frac{6}{5}\gamma^{2}\left(  \sum_{l=2}^{u}\Delta_{l}+2(u-1)\Delta_{u}+\sum
_{l=u+1}^{m}\Delta_{l-2}\right)  \} \\
\leq &  \exp\left(  \frac{1}{2}\right).
\end{align*}

For \eqref{Cont_Exp}, we notice that under condition \eqref{Cond_on_n} and
\eqref{Cond_on_n_2}, we have
\begin{align*}
\left\vert \sum_{r=1}^{2^{n}}\theta_{m}(t_{r}^{n})\Lambda_{i}(t_{r}%
^{n})\Lambda_{j}(t_{r}^{n})\right\vert  &  \leq\theta_{1}\Delta_{m-1}%
\varepsilon_{0}((k^{\prime}-k)\Delta_{m})^{\beta}\Delta_{n}^{2\alpha^{\prime}%
}+2\theta_{1}\Delta_{m-1}\Delta_{n}^{2\alpha^{\prime}}\\
&  \leq\varepsilon_{0}\gamma(k^{\prime}-k)^{\beta-1/2}+2\gamma,
\end{align*}
and
\begin{align*}
\left\vert \sum_{r=1}^{2^{n}}\eta_{m}(t_{r}^{n})\left(  \Lambda_{i}(t_{r}%
^{n})^{2}+\Lambda_{j}(t_{r}^{n})^{2}\right)  \right\vert  &  \leq\left(
(k^{\prime}-k)\Delta_{m}+2\right)  \eta_{1}\Delta_{m-1}2\Delta_{n}%
^{2\alpha^{\prime}}\\
&  \leq2\gamma^{2}.
\end{align*}
Combining the analysis for \eqref{Cont_Constants} and \eqref{Cont_Exp}, we
have
\begin{align*}
&E_{n}\exp(\theta_{0}\{L_{i,j}^{n+m}(k^{\prime})-L_{i,j}^{n+m}(k)\})\\
\leq& \exp\left(  \theta_{0}^{2}\Delta_{n+m}^{2}(k^{\prime}-k)+\frac{1}%
{2}+\varepsilon_{0}\gamma(k^{\prime}-k)^{\beta-1/2}+2\gamma+2\gamma^{2}\right)\\
\leq& 4\exp\left(  \varepsilon_{0}\gamma(k^{\prime}-k)^{\beta-1/2}\right)  .
\end{align*}
\end{proof}

%\begin{supplement}
%\sname{Supplement A}\label{suppA}
%\stitle{Title of the Supplement A}
%\slink[url]{http://www.e-publications.org/ims/support/dowload/imsart-ims.zip}
%\sdescription{Dum esset rex in
%accubitu suo, nardus mea dedit odorem suavitatis. Quoniam confortavit
%seras portarum tuarum, benedixit filiis tuis in te. Qui posuit fines tuos}
%\end{supplement}

\bibliographystyle{plainnat}
\bibliography{rough_path_ref}

\end{document}